\newcounter{maincounter}
\numberwithin{maincounter}{section}
\numberwithin{equation}{section}
\newtheorem{lemma}[maincounter]{Lemma}
\newtheorem{proposition}[maincounter]{Proposition}
\newtheorem{corollary}[maincounter]{Corollary}
\newtheorem{theorem}[maincounter]{Theorem}
\newtheorem{definition}[maincounter]{Definition}
\def\AA{\mathbb{A}}
\def\NN{\mathbb{N}}
\def\RR{\mathbb{R}}
\def\CC{\mathbb{C}}
\def\ZZ{\mathbb{Z}}
\def\PP{\mathbb{P}}
\def\QQ{\mathbb{Q}}
\newcommand{\cal}{\mathcal}
\newcommand{\cA}{\cal{A}}
\newcommand{\cB}{\cal{B}}
\newcommand{\IP}{{\PP}}
\newcommand{\IC}{{\CC}}
\newcommand{\IR}{{\RR}}
\newcommand{\IRanexp}{{{\RR}_{\rm an,exp}}}
\newcommand{\IQbar}{{\overline{\QQ}}}
\newcommand{\IZ}{{\ZZ}}
\newcommand{\IN}{{\NN}}
\newcommand{\IA}{{\AA}}
\newcommand{\IQ}{{\QQ}}
\newcommand{\cL}{{\mathcal L}}
\newcommand{\cM}{{\mathcal M}}
\newcommand{\ssm}{\setminus}
\newcommand{\ord}[1]{{\rm ord}({#1})}
\renewcommand{\subset}{\subseteq} 
\renewcommand{\supset}{\supseteq}
\newcommand{\tor}[1]{{#1}_{\mathrm{tor}}}
\newcommand{\pullbackcorner}[1][dr]{\save*!/#1-1.7pc/#1:(-1.5,1.5)@^{|-}\restore}
\begin{document}
\title{The relative Manin--Mumford conjecture}
\author{Ziyang Gao}
\author{Philipp Habegger}

\address{Department of Mathematics, UCLA, Los Angeles, CA 90095, USA}
\email{ziyang.gao@math.ucla.edu}
\address{Department of Mathematics and Computer Science; University of Basel; Spiegelgasse 1, 4051 Basel, Switzerland}
\email{philipp.habegger@unibas.ch}

\subjclass[2000]{11G30, 11G50, 14G05, 14G25}

\maketitle

{\centering\footnotesize \textit{To the memory of Bas Edixhoven}\par}

\begin{abstract}
  We prove the Relative Manin--Mumford Conjecture for
  families of abelian varieties in characteristic $0$. We follow the
  Pila--Zannier method to study special point problems, and we use the Betti map which 
  goes back to work of Masser and Zannier in the case of curves. The key
  new ingredients compared to previous applications of this approach are
  a height inequality proved by both authors of the current paper and
  Dimitrov, and the first-named author's study of certain degeneracy
  loci in subvarieties of abelian schemes.
  
  We also strengthen this result and prove a criterion for torsion points to be dense in a subvariety of an abelian scheme over $\IC$.

  The Uniform Manin--Mumford
  Conjecture for curves embedded in their Jacobians was first proved
  by K\"uhne. We give a new  proof, as a corollary to our main theorem, that does
  not use equidistribution.  
\end{abstract}
\tableofcontents

\section{Introduction}

Let $S$ be a regular, irreducible, quasi-projective
variety defined over an algebraically closed  field $L$ of characteristic $0$. Let $\pi
\colon \cA \rightarrow S$ be an abelian scheme of relative dimension
$g  \ge 1$, namely a proper smooth group scheme whose fibers are
abelian varieties.
Let $\cA_{\mathrm{tor}}$ denote the union over all $s\in S(L)$ of
the group of all torsion points in $\cA_s = \pi^{-1}(s)$. For each $N
\in \mathbb{Z}$, let $[N] \colon \cA \rightarrow \cA$ be the
multiplication-by-$N$ morphism.

The goal of this paper is to prove the \textit{relative Manin--Mumford
  conjecture} for abelian schemes.

\begin{theorem}
  \label{MainThm}
  Let $X$ be an irreducible subvariety of $\cA$. Assume that
  $\mathbb{Z}X := \bigcup_{N\in \mathbb{Z}} [N]X$ is Zariski dense in
  $\cA$.
  If $X(L) \cap \cA_{\mathrm{tor}}$ is Zariski dense in $X$, then
  $\dim X \ge g$.
\end{theorem}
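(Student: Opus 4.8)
The plan is a proof by contradiction following the Pila--Zannier strategy, with the Betti map playing the role of the uniformising device in the spirit of Masser and Zannier, and resting on three external inputs: classical Manin--Mumford in the base case, the study of degeneracy loci in subvarieties of abelian schemes, and the height inequality of Dimitrov and the two authors; the Pila--Wilkie counting theorem and an Ax--Lindemann type functional transcendence statement for $\cA$ provide the bridge between them. Suppose $\dim X<g$. By the Lefschetz principle we may assume $L=\CC$, and by a spreading--out argument we may assume $\cA\to S$ and $X$ are defined over $\IQbar$; note that $X$ automatically dominates $S$, because $\pi(\ZZ X)=\pi(X)$ while $\ZZ X$ is Zariski dense in $\cA$. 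We argue by induction on $\dim\cA$. If $\dim S=0$ the assertion is the classical Manin--Mumford conjecture, a theorem of Raynaud: the irreducible $X$ with Zariski dense torsion is a torsion coset $\tau+B$, and $\ZZ X$ dense forces $B=\cA$, hence $X=\cA$ and $\dim X=g$, a contradiction. So assume $\dim S\ge 1$ and write $d=\dim X<g$. (It is convenient to run the induction for the stronger statement, a criterion for Zariski density of torsion, so that the inductive hypothesis is available in the precise form needed below.)

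Over $\CC$, on a small open $U\subseteq\an{S}$ a choice of basis of the period lattice gives the Betti map $b\colon\an{\cA}|_U\to\TT^{2g}$, a fibrewise isomorphism of real Lie groups, and a point of $\cA_s$ is torsion of order dividing $N$ precisely when its Betti coordinates lie in $\tfrac1N\ZZ^{2g}/\ZZ^{2g}$. The study of degeneracy loci now yields a dichotomy: either $X^{\mathrm{deg}}=X$, where $X^{\mathrm{deg}}$ denotes the locus along which $b|_X$ fails to have maximal rank $2\min(d,g)=2d$, in which case $X$ is itself of degenerate type and the inductive hypothesis, applied after passing to a quotient abelian scheme or to an abelian scheme over a special subvariety of $S$, produces a contradiction; or $X^{\mathrm{deg}}\subsetneq X$ is a proper Zariski-closed subset and $b|_X$ has generic rank $2d$. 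In the second case, since $X(\IQbar)\cap\cA_{\mathrm{tor}}$ is Zariski dense in $X$ it remains Zariski dense in $X\ssm X^{\mathrm{deg}}$, which therefore carries torsion points of arbitrarily large order.

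The height inequality of Dimitrov and the two authors supplies constants $c_1,c_2>0$ with $\widehat h_{\cA}(P)\ge c_1 h_S(\pi(P))-c_2$ for all $P\in(X\ssm X^{\mathrm{deg}})(\IQbar)$, where $\widehat h_{\cA}$ is a fibrewise N\'eron--Tate height and $h_S$ a suitable Weil height on $S$. A torsion point $P$ has $\widehat h_{\cA}(P)=0$, hence $h_S(\pi(P))\le c_2/c_1$, so for a good choice of $h_S$ all torsion points of $X\ssm X^{\mathrm{deg}}$ lie above a fixed compact set $K\subseteq S(\CC)$, over which the fibres additionally have bounded Faltings height. By the uniform lower bound for Galois orbits of torsion points on abelian varieties of bounded dimension and bounded Faltings height, a torsion point $P$ of order $N$ on $X\ssm X^{\mathrm{deg}}$ has $\gg N^{c_3}$ Galois conjugates over the fixed field of definition, for some fixed $c_3>0$, and these are again such torsion points lying above $K$. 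Cover $K$ by finitely many Betti charts whose closures are compact and on which $b$ is definable in an o-minimal structure. Restricted to $X$ over one such chart, $b|_X$ has image $Y\subseteq\TT^{2g}$, a definable set of dimension $2d$, and off a lower-dimensional exceptional locus $b|_X$ is finite onto $Y$; so the Betti coordinates of the torsion conjugates form, for infinitely many $N$, a set of $\gg N^{c_3}$ points of $Y\cap\tfrac1N\ZZ^{2g}$, rational of height at most $N$. The Pila--Wilkie theorem then forces all but $O_\epsilon(N^\epsilon)$ of these onto a bounded number of positive-dimensional semialgebraic blocks inside $Y$; letting $N\to\infty$ and using that the blocks vary in finitely many definable families yields a genuine positive-dimensional semialgebraic block in $Y=b(X|_U)$. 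By the Ax--Lindemann theorem for $\cA$, such a block comes from a rational translate of a subtorus, so $X$ contains a positive-dimensional weakly special subvariety; passing to the quotient by the abelian subscheme involved and invoking the inductive hypothesis gives a contradiction. Hence $\dim X\ge g$.

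The heart of the matter --- and the reason this is harder than the Pila--Zannier argument for a single abelian variety --- is that in a family the torsion points can a priori spread out, both over the non-compact base $S$ and in the ``modular'' direction, so that no single definable fundamental set is available for Pila--Wilkie. The height inequality is precisely the tool that confines them to a compact piece, and the degeneracy--locus analysis is what makes the Betti rank as large as possible, so that the resulting count is efficient; matching the Pila--Wilkie output with the functional transcendence statement, and organising the reductions so that the inductive hypothesis applies at each degenerate stage, are the remaining technical obstacles.
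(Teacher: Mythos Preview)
Your outline has the right architecture and matches the paper's strategy: Pila--Zannier via the Betti map, the height inequality confining the torsion, degeneracy-locus analysis organising the induction. But the functional transcendence step is misidentified, and this is a genuine gap. A semialgebraic block in the Betti image $b(X|_U)\subset\TT^{2g}$ does \emph{not}, via any Ax--Lindemann statement, yield a weakly special subvariety of $X$: the block lives in real Betti coordinates, and semialgebraic there is not complex-algebraic in the holomorphic uniformising space $\IC^g\times\mathfrak{H}_g$, which is where Ax--Lindemann operates. What the paper does instead is lift the Pila--Wilkie curve to $\tilde\gamma_x\colon[0,1]\to\IC^g\times\mathfrak{H}_g$, take the complex-analytic closure $\tilde Z$ of its image, and use the semialgebraicity of $\tilde b\circ\tilde\gamma_x$ together with the bi-algebraic structure on $\IA_g$ to bound $\dim\tilde Z^{\mathrm{Zar}}\le 1+\dim\pi(Y)^{\mathrm{biZar}}$, where $Y=u(\tilde Z)^{\mathrm{Zar}}$. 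The full \emph{mixed Ax--Schanuel} inequality (not Ax--Lindemann) then gives $\dim Y\ge\dim Y^{\mathrm{biZar}}-\dim\pi(Y)^{\mathrm{biZar}}$, which places $Y$ in the \emph{first} degeneracy locus $X^{\mathrm{deg}}(1)$ --- a strictly larger object than the $0$-th one your Betti-rank dichotomy detects. Both branches of your dichotomy thus feed into $X^{\mathrm{deg}}(1)=X$, and it is the structure theorem for that condition (\cite[Theorem~8.1]{GaoBettiRank} with $t=1$) that produces the quotient abelian scheme on which to induct on $g$. Your ``pass to the quotient by the abelian subscheme involved'' presupposes a weakly special that the argument does not supply.

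A smaller but still real gap: the reduction to $\IQbar$ is not a simple spreading-out, since Zariski-density of torsion need not survive naive specialization. The paper runs an induction on $\mathrm{trdeg}_{\IQbar}L$, alternating Theorem~\ref{MainThm} with the Betti-rank criterion Theorem~\ref{ThmCritTorsionDense}: a torsion point over $L$ spreads to a torsion curve over the smaller field, forcing $\mathfrak{X}^{\mathrm{deg}}(0)$ dense in the spread-out $\mathfrak{X}$, whence $\mathrm{rank}_{\mathrm{Betti}}(\mathfrak{X})<2\dim\mathfrak{X}$; the criterion over the smaller field gives $\mathrm{rank}_{\mathrm{Betti}}(\mathfrak{X})=2g$, so $\dim\mathfrak{X}>g$ and hence $\dim X\ge g$. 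Your parenthetical about inducting on ``the stronger statement'' points in the right direction, but the actual mechanism is different from what you sketch.
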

Throughout the whole paper, by \textit{irreducible subvariety} we mean \textit{closed irreducible subvariety} unless stated otherwise.

The assumption $\ZZ X$ being Zariski dense in $\cA$ can be checked
over the geometric generic fiber of $\cA \rightarrow S$. Indeed, let
$\eta$ be the generic point of $S$ and fix an algebraic closure of the
function field of $S$. Write $X_{\overline{\eta}}$ for the geometric
generic fiber of $\pi|_X$.
Then $X_{\overline \eta}$ is non-empty if and only if $\pi|_X\colon
X\rightarrow S$ is dominant.
In particular, $\cA_{\overline{\eta}}$ is an abelian variety over
an algebraically closed field containing the possible reducible $X_{\overline\eta}$. Then $\ZZ X$ is Zariski dense in $\cA$
if and only if $X_{\overline{\eta}}$ is non-empty and not contained in
a finite union of proper algebraic subgroups of $\cA_{\overline{\eta}}$.

The Relative Manin--Mumford Conjecture was inspired by S.~Zhang's ICM
talk \cite{zhang1998small} and proposed by Pink \cite[Conjecture~6.2]{Pink} and 
Zannier~\cite{ZannierBook}. In the case $\dim X=1$ it was proved in a
series of papers by Masser--Zannier and Corvaja--Masser--Zannier
\cite{MZ:torsionanomalous,MasserZannierTorsionPointOnSqEC,
MASSER2014116, MasserZannierRelMMSimpleSur, CorvajaMasserZannier2018,
MasserZannierRMMoverCurve}. See also work of
Stoll~\cite{Stoll:simtorsion} for an explicit case. For surfaces some 
results are due to the first-named author~\cite{hab:weierstrass} and the recent work of 
Corvaja--Tsimerman--Zannier~\cite{CTZ:23}. When $\cA$ is a fibered product of families of elliptic curves, it was proved by K\"{u}hne \cite{KuehneRBC}. A core idea of many of these papers, as in ours, is to 
use the Betti coordinates introduced by Masser and Zannier~\cite{MZ:torsionanomalous} to study questions in diophantine
geometry.

As a corollary  of Theorem~\ref{MainThm} we obtain  the \textit{Uniform
Manin--Mumford Conjecture} for curves embedded in their Jacobians, which was recently proved by K\"{u}hne
\cite[Theorem~1.2]{KuehneUnifMM}.

\begin{corollary}\label{CorUniformMM}
  For each integer $g \ge 2$, there exists a constant $c = c(g) > 0$ with the following property. 
  Let $C$ be an irreducible, smooth, projective curve of genus $g$
  defined over $\IC$. Let $x_0 \in C(\IC)$, and let $C-x_0$ be the
  image of the Abel--Jacobi embedding based at $x_0$ in the Jacobian
  $\mathrm{Jac}(C)$ of $C$. Then
  \begin{equation}
    \#(C(\IC)-x_0) \cap \mathrm{Jac}(C)_{\mathrm{tor}} \le c.
  \end{equation}
\end{corollary}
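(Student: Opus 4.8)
The plan is to derive Corollary~\ref{CorUniformMM} from Theorem~\ref{MainThm} by a standard specialization/spreading-out argument, turning the uniformity statement for individual curves into a density statement for torsion points on a subvariety of a single abelian scheme over a base. The point is that curves of genus $g$ together with a chosen base point $x_0$ form an irreducible quasi-projective moduli space (roughly $\overline{\mathcal M}_{g,1}$, or a suitable level cover to rigidify), and over this base there is a universal Jacobian $\pi\colon \cA\to S$ of relative dimension $g$ together with a universal Abel--Jacobi curve $X\subset\cA$ with $\dim X = \dim S + 1$. First I would set up this family carefully: pass to a finite \'etale cover of the moduli space if necessary so that a relative curve $\cC\to S$ with a section $x_0\colon S\to\cC$ exists, let $\cA=\jac{\cC/S}$, and let $X$ be the image of $\cC$ under the fiberwise Abel--Jacobi embedding based at $x_0$. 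Since $\dim X=g+(\dim S-\dim S)=1+\dim S$... more precisely $\pi|_X\colon X\to S$ has one-dimensional fibers, so $\dim X=\dim S+1$.

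Next I would argue by contradiction: if no uniform bound $c(g)$ exists, then there is a sequence of pairs $(C_n,x_{0,n})$, i.e.\ a sequence of points $s_n\in S(\IC)$, with $\#\big((C_n(\IC)-x_{0,n})\cap\jac{C_n}_{\mathrm{tor}}\big)\to\infty$. The key is to promote this to Zariski density of torsion on a positive-dimensional subvariety. Here one uses a Bezout/degree argument: the torsion points on a fiber $X_s$ that fail to be Zariski dense in $X_s$ (which is a curve, so ``not dense'' means ``finite'') are controlled, but more robustly one invokes the standard trick that a subvariety $Y\subset X$ containing $\{s_n\}\times(\text{growing torsion})$ for a suitable limit $Y$ of Zariski closures must itself contain the fiberwise torsion densely. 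Concretely, let $X_n\subset X$ be the Zariski closure of the torsion points in $X_{s_n}$ lying over $s_n$ (a growing finite set) — one should instead take $X'$ to be an irreducible component of the Zariski closure of $\bigcup_n X_n$ of maximal dimension with infinitely many $n$ contributing. Then $X'(\IC)\cap\cA_{\mathrm{tor}}$ is Zariski dense in $X'$ by construction, and $X'$ dominates infinitely many points $s_n$, hence dominates a positive-dimensional subvariety $S'$ of $S$; replacing $S$ by (the normalization of) $S'$ and $\cA$ by its pullback, we get the hypotheses of Theorem~\ref{MainThm} provided we also check the group-theoretic condition.

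The group-theoretic condition $\ZZ X'$ Zariski dense in $\cA$ (over the new base) is the delicate point, and I expect it to be the main obstacle. One must rule out that $X'$ lands in a proper abelian subscheme, or more generally in a finite union of translates of proper subgroup schemes of the geometric generic fiber. For the full curve $X=\cC\hookrightarrow\cA$ this is classical: the Abel--Jacobi image of a smooth projective curve of genus $g\ge 1$ generates its Jacobian and is not contained in any translate of a proper abelian subvariety (a curve generating a proper subvariety would force the Jacobian to be non-simple in a way incompatible with the curve being the whole curve, and the standard statement is that $\jac C$ equals the subgroup generated by $C-x_0$). For the limiting subvariety $X'$ one needs this to persist, which requires choosing the component $X'$ and the base $S'$ so that the geometric generic fiber of $X'\to S'$ is again (an open subset of) a genus-$g$ curve Abel--Jacobi embedded — this is where one exploits that each $X_{s_n}$ is the full Abel--Jacobi curve, not an arbitrary subvariety, so $X'\to S'$ has one-dimensional fibers equal to these curves and the argument above applies fiberwise. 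Once $\ZZ X'$ is Zariski dense in the pulled-back abelian scheme, Theorem~\ref{MainThm} forces $\dim X'\ge g$, hence $\dim S'\ge g-1\ge 1$, which is consistent; the contradiction instead comes from comparing with the \emph{relative dimension}: $X'\to S'$ has fibers of dimension $1$, so $\dim X' = \dim S'+1$, and Theorem~\ref{MainThm} gives $\dim S'+1\ge g$. This is not yet absurd, so the final step must instead apply Theorem~\ref{MainThm} to the \emph{fiberwise} data after a further base change down to a point-like base, or better, apply the more refined criterion for torsion density alluded to in the abstract (the strengthening of Theorem~\ref{MainThm}) to conclude that density of fiberwise torsion on a subvariety with $1$-dimensional fibers and $\ZZ X'$ dense is impossible unless the generic fiber of $X'$ is already contained in a proper subgroup — contradicting that it is a full Abel--Jacobi curve. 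Thus the heart of the matter is the interplay between the relative dimension bound from Theorem~\ref{MainThm} and the rigidity of Abel--Jacobi curves, and the main work is verifying the group-theoretic non-degeneracy along the limit.
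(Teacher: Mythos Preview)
Your setup with the universal Jacobian over a moduli space is correct, and you correctly identify the central obstacle: applying Theorem~\ref{MainThm} to the universal Abel--Jacobi curve $X\subset\cA$ only yields $\dim X'\ge g$, i.e.\ $\dim S'+1\ge g$, which is no contradiction since $\dim\mathbb{M}_{g,1}=3g-2$. Your attempt to rescue this via the Betti-rank criterion (Theorem~\ref{ThmCritTorsionDense}) does not work either: that theorem says torsion is Zariski dense in $X'$ if and only if $\mathrm{rank}_{\mathrm{Betti}}(X')=2g$, and since $\dim X'=\dim S'+1$ can be $\ge g$, nothing prevents the Betti rank from attaining $2g$. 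In fact torsion \emph{can} be Zariski dense in the universal Abel--Jacobi curve without contradicting uniform boundedness on fibers, so neither theorem applied to $X'$ itself can close the argument.

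The paper's remedy is to change the variety to which Theorem~\ref{MainThm} is applied. One passes to the \emph{Faltings--Zhang map}
\[
\mathscr{D}_5\colon \mathfrak{C}_S^{[6]}\longrightarrow \mathfrak{J}_S^{[5]},\qquad (x_0,x_1,\ldots,x_5)\mapsto (x_1-x_0,\ldots,x_5-x_0),
\]
and sets $X=\mathscr{D}_5(\mathfrak{C}_S^{[6]})$ inside the abelian scheme $\mathfrak{J}_S^{[5]}$ of relative dimension $5g$. Now $\dim X\le 6+\dim S\le 6+(3g-3)<5g$ for $g\ge 2$, and one checks $\ZZ X$ is Zariski dense since each $(\mathfrak{C}_s-x_0)^5$ generates $\mathfrak{J}_s^5$. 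So Theorem~\ref{MainThm} forces $Y:=\overline{X\cap(\mathfrak{J}_S^{[5]})_{\mathrm{tor}}}^{\mathrm{Zar}}\subsetneq X$. The remaining work is to extract a uniform fiberwise bound from the properness of $Y$; this is done by Noetherian induction on $S$ together with two combinatorial lemmas (Lemmas~6.3 and~6.4 of \cite{DGHUnifML}) that control, for fixed $s$, how many base points $x$ can satisfy $(\mathfrak{C}_s-x)^5\subset Y_s$ and how large a torsion packet can be once $(\mathfrak{C}_s-x)^5\not\subset Y_s$. A short bootstrapping step then removes the exceptional set of base points, and a specialization argument reduces $\IC$ to $\IQbar$. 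The fibered-power trick is the missing idea in your proposal.
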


A second proof of the Uniform Manin--Mumford Conjecture for curves was given by Yuan in \cite{YuanArithBig}, based on the theory of adelic line bundles over 
quasi-projective varieties of Yuan--Zhang \cite{YuanZhangEqui}. Prior to K\"{u}hne's 
proof of the full conjecture, DeMarco--Krieger--Ye \cite{DeMarcoKriegerYeUniManinMumford} proved the case where $g = 2$ and $C$ is bi-elliptic, using method of 
arithmetic dynamical systems.

We hereby give a different proof. The
common tools used in \cite{KuehneUnifMM} and the current paper are the
height inequality \cite[Theorems 1.6 and B.1]{DGHUnifML} proved by both
authors of the current paper and Dimitrov, and the first-named
author's results on the generic rank of the Betti map \cite{GaoBettiRank}. K\"{u}hne used this height inequality,
among other tools, to prove an equidistribution result. 
He then applied this equidistribution result and 
\cite[Theorem 1.3]{GaoBettiRank} in combination with the Ullmo--S.~Zhang
approach to the Bogomolov Conjecture to conclude the argument.

Our proof does not involve equidistribution, and we apply
\cite{GaoBettiRank} in a different way.  The crucial case for
Theorem~\ref{MainThm} is the case where $L$ is an algebraic closure
$\IQbar$ of $\IQ$, \textit{i.e.}, in the setup of the theorem we assume every variety is defined over $\IQbar$. The proof of Theorem~\ref{MainThm} over $\IQbar$ occupies the current paper up to $\mathsection$\ref{SectionConclusion}. Then in
$\mathsection$\ref{SectionUnifMM} we prove
Corollary~\ref{CorUniformMM} as a consequence of
Theorem~\ref{MainThm} over $\IQbar$. The deduction is inspired by \cite[Theorem~2.4]{Stoll:Uniform} and \cite[Proposition~7.1]{DGHUnifML}.

Our proof of Theorem~\ref{MainThm} for  $L=\IQbar$ is in spirit of the Pila--Zannier
method~\cite{PilaZannier} to solve special point problems. Roughly
speaking this strategy can be divided into four steps. We start  
with a 
large Galois orbit result on 
torsion points on an abelian variety,
quantifying earlier work of Masser~\cite{Masser:smallvalues}. Such a
result can be deduced from work of David~\cite{DavidMinHaut}. It
also follows 
 more directly by R\'{e}mond and Gaudron's refinement
 \cite{RemondGaloisBound, GRGaloisBound}  of deep work of Masser and
 W\"{u}stholz
 on isogeny estimates for abelian varieties \cite{MW:abelianisog}. 
A key new input at this step is the 
height
inequality \cite[Theorem~B.1]{DGHUnifML} which roughly speaking allows
us to bound the height of the abelian variety itself.
Then we introduce a suitable set that is definable in an o-minimal
structure, here $\IRanexp$, that encodes our points of interest as rational.
We then use this result to invoke an
appropriate version of the Pila--Wilkie counting theorem
due to the second-named author and Pila \cite[Corollary~7.2]{HabeggerPilaENS}.
Finally, we apply a suitable functional transcendence theorem, the
mixed Ax--Schanuel theorem proved by the first-named author
\cite{GaoMixedAS}. In this last step we study the degeneracy locus,
defined in $\mathsection$\ref{secdeglocus}, as was done in
\cite[Proposition~1.10]{GaoBettiRank}.
 
Our application of the height
inequality \cite[Theorem~B.1]{DGHUnifML} differs  from the approaches in
\cite{DGHUnifML} and \cite{KuehneUnifMM}. Instead of constructing a
non-degenerate subvariety, we study the degeneracy loci more
carefully. We prove the desired result by dividing into two cases:
either the height inequality \cite[Theorem~B.1]{DGHUnifML} is
applicable or not. If it is not applicable, then the $0$-th degeneracy
locus is large and we may apply \cite[Proposition~1.10]{GaoBettiRank}.
If it is applicable, then we follow the Pila--Zannier method described
above. Ultimately we show that 
\cite[Proposition~1.10]{GaoBettiRank} still applies and can conclude
the Relative Manin--Mumford Conjecture.

\subsection{Criterion of torsion points being dense}
Let us assume for the moment that the base field $L$ is a subfield of $\IC$.

It is natural to ask whether the following converse of
Theorem~\ref{MainThm} is true.
Let $X\subset \cA$ be an irreducible subvariety.
If $\ZZ X$ is Zariski
dense in $\cA$ and $\dim X \ge g$, then is it true that $X(\IC) \cap
\cA_{\mathrm{tor}}$ is Zariski dense in $X$? This question is
related to the generic Betti rank, as is studied in \cite{ACZBetti}.
Indeed, using \cite[Proposition~2.1.1]{ACZBetti}, one can show that the
answer is yes \textit{in some cases}, even for the Euclidean topology. For example three cases were proved in \cite{ACZBetti}: if $g = 2, 3$ and $\cA/S$ has no fixed part over any finite covering of $S$, if 
the geometric generic fiber $\cA_{\overline{\eta}}$ satisfies 
$\mathrm{End}(\cA_{\overline{\eta}}) = \ZZ$, and if $\cA \rightarrow
S$ is the Jacobian of the universal hyperelliptic curve.
\cite[Theorem~1.4.(i)]{GaoBettiRank} proves some more general cases,
for example if $\cA_{\overline{\eta}}$ is simple. Some new cases for
the denseness in the Euclidean topology were
recently proved by Eterovi\'{c}--Scanlon in \cite{EterovicScanlon}.

In this paper, we provide a criterion for $X(\IC) \cap \cA_{\mathrm{tor}}$ being Zariski dense in $X$. %
Using this criterion, one can show that the converse of
Theorem~\ref{MainThm} is \textit{false in general} even if $\cA/S$ has
no fixed part over any finite covering of $S$. See  \cite[Example~9.4]{GaoBettiRank} for a counterexample with $g = 4$. 
 The statement of this criterion, Theorem~\ref{ThmCritTorsionDense}, involves the Betti map and the Betti
 rank. We briefly recall the definition here. For a precise definition of the Betti map, we refer to \cite[$\mathsection$3-$\mathsection$4]{GaoBettiRank} or \cite[$\mathsection$2.3 and $\mathsection$B.1]{DGHUnifML}.

For any $s \in S(\IC)$, there exists a simply-connected open neighborhood $\Delta
\subseteq S^{\mathrm{an}}$ of $s$; here and below the superscript
``$\mathrm{an}$'' refers to complex analytification. Then one can define a basis $\omega_1(s),\ldots,\omega_{2g}(s)$ of the period lattice of each fiber $s \in \Delta$ as holomorphic functions of $s$. Now each fiber $\cA_s = \pi^{-1}(s)$ can be identified with the complex torus $\IC^g/\left(\IZ \omega_1(s)\oplus \cdots \oplus \IZ\omega_{2g}(s)\right)$, and each point $x \in \cA_s(\IC)$ can be expressed as the class of $\sum_{i=1}^{2g}b_i(x) \omega_i(s)$ for real numbers $b_1(x),\ldots,b_{2g}(x)$. Then $b_{\Delta}(x)$ is defined to be the class of the $2g$-tuple $(b_1(x),\ldots,b_{2g}(x)) \in \IR^{2g}$ modulo $\IZ^{2g}$.
We obtain a real-analytic map $
b_{\Delta} \colon \cA_{\Delta} = \pi^{-1}(\Delta) \rightarrow \mathbb{T}^{2g}$, 
which is fiberwise a group isomorphism and
where $\mathbb{T}^{2g}$ is the real torus of dimension $2g$. The map
$b_{\Delta}$ is well-defined up-to real analytic automorphisms of
$\mathbb{T}^{2g}$, \textit{i.e.}, elements of $\mathrm{GL}_{2g}(\ZZ)$.

Let $X^{\mathrm{reg}}$ denote the regular locus of $X$.
The \textit{generic Betti rank} of $X$ is defined in terms of the
differential
$\mathrm{d} b_\Delta$ to be
\[
\mathrm{rank}_{\mathrm{Betti}}(X) := \max_{x \in X^{\mathrm{reg,an}}\cap \cA_{\Delta}} \mathrm{rank}_{\RR} (\mathrm{d}b_{\Delta}|_{X\cap \cA_{\Delta}})_x.
\]
This definition does not depend on the choice of $\Delta$ (see \cite[end of $\mathsection$4]{GaoBettiRank}) or $b_{\Delta}$ (since every two $b_{\Delta}$ differ from an element in $\mathrm{GL}_{2g}(\ZZ)$). The definition of the generic Betti rank yields the following trivial upper bound
\begin{equation}\label{EqBettiRank}
\mathrm{rank}_{\mathrm{Betti}}(X) \le 2 \min\{\dim X, g\}.
\end{equation}

\begin{theorem}
  \label{ThmCritTorsionDense}
  Assume $\ZZ X$ is Zariski dense in $\cA$. Then $X(L) \cap \cA_{\mathrm{tor}}$ is Zariski dense in $X$ if and only if $\mathrm{rank}_{\mathrm{Betti}}(X) = 2g$.
\end{theorem}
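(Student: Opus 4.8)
The plan is to establish the two implications separately. The reverse implication, $\mathrm{rank}_{\mathrm{Betti}}(X)=2g\Rightarrow X(L)\cap\cA_{\mathrm{tor}}$ Zariski dense in $X$, is soft and I would prove it directly. The forward implication is, after a reduction to $L=\IQbar$, exactly the assertion that the rest of the paper establishes: the proof of Theorem~\ref{MainThm} in fact yields the stronger conclusion $\mathrm{rank}_{\mathrm{Betti}}(X)=2g$, and then $\dim X\ge g$ is the formal consequence via \eqref{EqBettiRank}, since $2g=\mathrm{rank}_{\mathrm{Betti}}(X)\le 2\min\{\dim X,g\}$.

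For the reverse implication, fix $\Delta$ and the Betti map $b_\Delta\colon\cA_\Delta\to\TT^{2g}$ as in the definition of $\mathrm{rank}_{\mathrm{Betti}}(X)$. If $\mathrm{rank}_{\mathrm{Betti}}(X)=2g$, there is a point of $X^{\mathrm{reg,an}}\cap\cA_\Delta$ at which $\mathrm{d}(b_\Delta|_{X\cap\cA_\Delta})$ has rank $2g$; by lower semicontinuity of the rank, $b_\Delta|_{X\cap\cA_\Delta}$ is a submersion onto $\TT^{2g}$ on a non-empty Euclidean-open subset $W$ of $X^{\mathrm{an}}$, and $W$ is Zariski dense in $X$ because $X$ is irreducible. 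Near any point of $W$ the map $b_\Delta$ is an open submersion, so the preimages of the torsion points of $\TT^{2g}$ (the image of $(\QQ/\ZZ)^{2g}$, which is dense) are dense there; these preimages are torsion points of the corresponding fibres of $\cA$. Hence $\cA_{\mathrm{tor}}\cap X$ is Euclidean-dense in $W$, so Zariski dense in $X$, at least over $\IC$. Passing from $\IC$ to an arbitrary algebraically closed $L\subseteq\IC$ is then a routine spreading out and specialization argument, using that $\cA[N]\to S$ is finite \'etale, so that Zariski density of torsion points in $X$ does not depend on the algebraically closed field of definition.

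For the forward implication, I would first reduce to $L=\IQbar$ as in the treatment of Theorem~\ref{MainThm} over a general base field: spread $(\cA,X)$ out over a variety whose function field embeds into $L$ and specialize, which preserves Zariski density of torsion points and leaves the generic Betti rank unchanged. Over $\IQbar$ I would run the Pila--Zannier strategy sketched in the Introduction: (i) a large Galois orbit lower bound for torsion points on abelian varieties, obtained by combining the height inequality \cite[Theorem~B.1]{DGHUnifML} with isogeny and period estimates \cite{RemondGaloisBound,GRGaloisBound,MW:abelianisog} (compare \cite{DavidMinHaut}); (ii) a set definable in $\IRanexp$ in which the torsion points of interest appear as rational points; (iii) the Pila--Wilkie counting theorem in the form \cite[Corollary~7.2]{HabeggerPilaENS}; and (iv) the mixed Ax--Schanuel theorem \cite{GaoMixedAS} applied to the degeneracy loci of Section~\ref{secdeglocus}. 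Step (iv) I would organise around a dichotomy: if the height inequality \cite[Theorem~B.1]{DGHUnifML} does not apply, then the $0$-th degeneracy locus of $X$ is already large and \cite[Proposition~1.10]{GaoBettiRank} applies directly; if it does apply, the counting argument of steps (i)--(iii) forces the same situation, and \cite[Proposition~1.10]{GaoBettiRank} again yields $\mathrm{rank}_{\mathrm{Betti}}(X)=2g$, using that $\ZZ X$ is Zariski dense in $\cA$.

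I expect the main obstacle to be step (iv) of the forward implication: converting the output of the counting theorem --- a positive-dimensional algebraic family of torsion points in $X$, equivalently a subvariety of $X$ lying inside an excess-dimensional weakly special subvariety --- into the non-degeneracy statement $\mathrm{rank}_{\mathrm{Betti}}(X)=2g$. In particular, arranging the dichotomy on the applicability of \cite[Theorem~B.1]{DGHUnifML} so that both branches funnel into a single application of \cite[Proposition~1.10]{GaoBettiRank} is the delicate point. By contrast, the reverse implication requires nothing beyond elementary differential topology and the standard insensitivity of Zariski density to the algebraically closed base field.
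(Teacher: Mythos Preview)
Your reverse implication is fine and matches the paper, which cites \cite[Proposition~2.1.1]{ACZBetti}; to get $L$-points rather than merely $\IC$-points the paper observes that where the Betti map has rank $2g$ the points of $X\cap\ker[n]$ are isolated, hence defined over $\IQbar$ when $X$ is.

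Your forward implication has a genuine gap. The output of the Pila--Zannier argument together with the degeneracy-locus machinery is \emph{not} $\mathrm{rank}_{\mathrm{Betti}}(X)=2g$; it is only Theorem~\ref{MainThm}, i.e.\ $\dim X\ge g$. Concretely, the counting argument gives $X_{\IC}^{\mathrm{deg}}(1)=X_{\IC}$, and \cite[Proposition~1.10]{GaoBettiRank} (equivalently \cite[Theorem~8.1]{GaoBettiRank} as used in the paper) translates this into a quotient $\varphi\colon\cA\to\cB'$ of relative dimension $g'$ with $\dim(\iota\circ\varphi)(X)<\dim X-(g-g')+1$; an induction on $g$ then yields $\dim X\ge g$, nothing more. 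The paper obtains $\mathrm{rank}_{\mathrm{Betti}}(X)=2g$ by a \emph{separate} step that you are missing entirely: assuming $\mathrm{rank}_{\mathrm{Betti}}(X)<2g$, \cite[Theorem~1.1]{GaoBettiRank} with $l=g$ produces a quotient with $\dim(\iota\circ\varphi)(X)<g'$, and the already-proved Theorem~\ref{MainThm} applied to the image forces $\dim(\iota\circ\varphi)(X)\ge g'$, a contradiction. So the logical order is the reverse of what you propose: Theorem~\ref{MainThm} is the input to Theorem~\ref{ThmCritTorsionDense}, not a corollary of it. Relatedly, your reduction from general $L$ to $\IQbar$ is too glib: the Betti rank of a spread does not obviously descend, and the paper instead runs an induction on $\mathrm{trdeg}_{\IQbar}L$ in which the two theorems are proved in tandem (\texttt{RMM(d)}$\Rightarrow$\texttt{TorDense(d)}$\Rightarrow$\texttt{RMM(d+1)}).
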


The ``if'' direction of this theorem is proved by
\cite[Proposition~2.1.1]{ACZBetti} and uses real analytic geometry. In fact, if $\mathrm{rank}_{\mathrm{Betti}}(X) = 2g$, then $X(\IC) \cap \cA_{\mathrm{tor}}$ is dense in the Euclidean topology on $X^{\mathrm{an}}$.

The ``only if'' direction of this theorem implies Theorem~\ref{MainThm} by \eqref{EqBettiRank}. In this paper, we prove this direction by a combination of Theorem~\ref{MainThm} and the criterion for $\mathrm{rank}_{\mathrm{Betti}}(X) = 2g$ given by \cite[Theorem~1.1]{GaoBettiRank}.

\subsection{From $\IQbar$ to $\IC$}
Early work of Bombieri--Masser--Zannier \cite{BMZUnlikely} shows how
to reduce certain Unlikely Intersection statements from base field $\IC$ to
$\IQbar$ using a specialization argument.
Barroero and Dill~\cite{BarDill:distinguised} provide a
 framework to approach these question in large generality. 
A specialization argument, based on a result of Masser, allows to pass
from $\IQbar$ to $\IC$ for Corollary~\ref{CorUniformMM}. However, the
specialization argument for Theorem~\ref{MainThm} and
Theorem~\ref{ThmCritTorsionDense} is more complicated.

All the varieties in question are defined over an algebraically closed
field of finite transcendence degree $d$ over $\IQbar$.
The proof of Theorem~\ref{MainThm} for $L=\IQbar$, \textit{i.e.}, if $d =
0$ is completed in $\mathsection$\ref{SectionConclusion}.
The proof of Theorem~\ref{ThmCritTorsionDense} is completed in
$\mathsection$\ref{SectionCriterionTorsionDenseOverIQbar}.

After this, we proceed by induction on $d$ by proving 
  two things in
$\mathsection$\ref{SectionSpecialization}: Theorem~\ref{MainThm} when
$\mathrm{trdeg}_\IQbar L\le d$ implies
Theorem~\ref{ThmCritTorsionDense} when
$\mathrm{trdeg}_\IQbar L\le d$, and
Theorem~\ref{ThmCritTorsionDense} when
$\mathrm{trdeg}_\IQbar L\le d$ implies
Theorem~\ref{MainThm} for
$\mathrm{trdeg}_\IQbar L\le d+1$.

Shortly before this paper was finalized, Corvaja, Tsimerman, and Zannier~\cite[Appendix~A]{CTZ:23} have independently
shown how to reduce Theorem~\ref{MainThm} from
$\IC$ to $\IQbar$ with a different argument.


\subsection{Notation} 
Let $\mathbb{A}_g$ be the moduli space of abelian varieties of
dimension $g$ with a polarization of type $\mathrm{diag}(d_1,\ldots,d_g)$
with $d_1\mid \cdots\mid d_g$ positive integers, endowed with
symplectic level-$\ell$-structure for some large enough but fixed
$\ell$ that is coprime to $d_g$ (so that $\mathbb{A}_g$ is a fine
moduli space).
Let $\pi\colon \mathfrak{A}_g \rightarrow \mathbb{A}_g$
be the universal abelian variety.
Both $\mathfrak{A}_g$ and $\IA_g$ are irreducible, regular,
quasi-projective varieties definable over a number field.

Any abelian variety is isogenous to a principally polarized abelian
variety, after extending the base field. We may apply this observation
to the generic fiber of our abelian schemes. Our main results
Theorems~\ref{MainThm} and \ref{ThmCritTorsionDense} are insensitive
to \'etale base change of $S$ and up-to isogeny. Therefore it is
possible to recover all results here while assuming $d_1=\cdots=d_g=1$
for the universal family.



\subsection*{Acknowledgements} Ziyang Gao has received
funding from the European Research Council (ERC) under the
European Union's Horizon 2020 research and innovation programme (grant
agreement n$^\circ$ 945714). Philipp Habegger has received funding
from the Swiss National Science Foundation (grant n$^\circ$ 200020\_184623). 
The authors would like to thank Daniel Bertrand and Umberto Zannier for having communicated with us the current formulation of the Relative Manin--Mumford Conjecture, in particular during the conference ``Third ERC Research Period on Diophantine Geometry'' in Rome 2015, and would like to thank the organizers of this conference. We would also like to thank Michael Stoll for comments on a previous version of the paper and Eric Gaudron for having pointed out to us the reference \cite{RemondGaloisBound}. We would like to thank the referees for their careful reading and valuable comments.


\section{Bi-algebraic Structure on the Universal Abelian Variety}
\label{SectionBiAlg}
The goal of this section is to collect some basic facts about the universal abelian variety, especially its bi-algebraic structure. The end of this section contains the functional transcendence statement, the \textit{Ax--Schanuel theorem} for the universal abelian variety.

Let $d_1,\ldots,d_g$ be positive integers such that $d_1\mid \cdots
\mid d_g$. Let $D = \mathrm{diag}(d_1,\ldots,d_g)$ be the $g\times g$
diagonal matrix.

\subsection{Moduli space of abelian varieties} 
Let $\mathbb{A}_g$ be the moduli space of abelian varieties which are polarized of type $D$. In this subsection, we describe the bi-algebraic geometry on $\mathbb{A}_g$.

Let $\mathfrak{H}_g$ be the Siegel upper half space defined by
\[
\{ Z = X + \sqrt{-1}Y \in \mathrm{Mat}_{g \times g}(\mathbb{C}) : Z =
Z^{\!^{\intercal}},X,Y\in \mathrm{Mat}_{g\times g}(\mathbb{R}),~ Y~\text{is positive definite}\}.
\]
On identifying a complex number with its real and imaginary parts, 
 $\mathfrak{H}_g$ becomes a semi-algebraic subset of $\IR^{2g^2}$.
Note that $\mathfrak{H}_g$ is an open subset, in the Euclidean topology,  of
$M_{g\times g}(\IC) = \{Z \in \mathrm{Mat}_{g \times g}(\mathbb{C}) : Z =
Z^{\!^{\intercal}}\} \cong \IC^{g(g+1)/2}$. In fact,
$\mathfrak{H}_g$ is a connected 
complex manifold. 
It is well-known that the universal covering space of
$\mathbb{A}_g^{\mathrm{an}}$, the analytification of $\mathbb{A}_g$,
is given by $\mathfrak{H}_g$ and the covering map is a holomorphic map
\[
u_B \colon \mathfrak{H}_g \rightarrow  \mathbb{A}_g^{\mathrm{an}}.
\]


\begin{definition}
\begin{enumerate}
\item[(i)] A subset $\tilde{Y} \subseteq \mathfrak{H}_g$ is said to be
  irreducible algebraic if $\tilde{Y}$ is a complex  analytic
  irreducible component of $W \cap \mathfrak{H}_g$, where $W$ is an
  algebraic subset of $M_{g \times g}(\mathbb{C})$.
\item[(ii)] An irreducible subvariety $Y$ of $\mathbb{A}_g$ is said to be bi-algebraic if $Y = u_B(\tilde{Y})$ for some irreducible algebraic subset $\tilde{Y}$ of $\mathfrak{H}_g$.
\end{enumerate}
\end{definition}

The following notation will be used. Let $\tilde{Z}$ be a complex
analytic irreducible subset of $\mathfrak{H}_g$. We use
$\tilde{Z}^{\mathrm{Zar}}$ to denote the smallest algebraic subset of
$\mathfrak{H}_g$ that contains $\tilde{Z}$
and use $u_B(\tilde{Z})^{\mathrm{biZar}}$ to denote the smallest
bi-algebraic subvariety of $\mathbb{A}_g$ that contains
$u_B(\tilde{Z})$. We have the following
relation
\begin{equation*}
u_B(\tilde{Z})^{\mathrm{Zar}} \subseteq u_B(\tilde{Z})^{\mathrm{biZar}}.
\end{equation*}

\subsection{Universal abelian variety}
Let $\pi \colon \mathfrak{A}_g \rightarrow \mathbb{A}_g$ be the universal abelian variety.

The uniformization of $\mathfrak{A}_g$, in the category of complex
spaces, is given by theta functions
\begin{equation}\label{EqUnifUniversalAbVar}
u \colon \IC^g \times  \mathfrak{H}_g \rightarrow \mathfrak{A}_g^{\mathrm{an}}.
\end{equation}

As for $\mathfrak{H}_g$, the complex space $\IC^g \times  \mathfrak{H}_g$ is naturally an open subset of the analytification of the complex algebraic variety $\IC^g \times M_{g \times g}(\mathbb{C})$. Thus we endow $\IC^g \times  \mathfrak{H}_g$ with the following algebraic structure, and therefore define a bi-algebraic structure on $\mathfrak{A}_g$.
\begin{definition}
\begin{enumerate}
\item[(i)] A subset $\tilde{Y} \subseteq \IC^g \times  \mathfrak{H}_g$
  is said to be irreducible algebraic if $\tilde{Y}$ is a complex
  analytic irreducible component of $W \cap (\IC^g \times
  \mathfrak{H}_g)$, where $W$ is an algebraic subset of $\IC^g
  \times M_{g \times g}(\mathbb{C})$.
\item[(ii)] An irreducible subvariety $Y$ of $\mathfrak{A}_g$ is said to be bi-algebraic if $Y = u(\tilde{Y})$ for some irreducible algebraic subset $\tilde{Y}$ of $\IC^g \times  \mathfrak{H}_g$.
\end{enumerate}
\end{definition}

We refer to \cite[Proposition~5.3]{GaoBettiRank} for a geometric description of bi-algebraic subvarieties of $\mathfrak{A}_g$. 
As for the moduli space, the following notation will be used. Let
$\tilde{Z}$ be a complex analytic irreducible subset of $\IC^g \times
\mathfrak{H}_g$. We use $\tilde{Z}^{\mathrm{Zar}}$ to denote the
smallest algebraic subset of $\IC^g \times \mathfrak{H}_g$ that
contains $\tilde{Z}$
and use $u(\tilde{Z})^{\mathrm{biZar}}$ to
denote the smallest bi-algebraic subvariety of $\mathfrak{A}_g$ that
contains $u(\tilde{Z})$. We have the following relation 
\begin{equation}
  u(\tilde{Z})^{\mathrm{Zar}} \subseteq u(\tilde{Z})^{\mathrm{biZar}}.
\end{equation}

The following lemma summarizes the bi-algebraic structures discussed above.
\begin{lemma}
We have the following  commutative diagram:
\begin{equation*}
\xymatrix{
 \IC^g \times \mathfrak{H}_g  \ar[r]^-{u} \ar[d]_{\tilde{\pi}} & \mathfrak{A}_g^{\mathrm{an}} \ar[d]^{\pi} \\ \mathfrak{H}_g \ar[r]^-{u_B} & \mathbb{A}_g^{\mathrm{an}},
}
\end{equation*}
where $\tilde{\pi}$ is the natural projection (hence is the
restriction of an algebraic map), and both $u$ and $u_B$ are uniformizations in the category of complex spaces.
\end{lemma}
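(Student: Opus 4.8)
The plan is to recall the classical transcendental description of the Siegel modular variety and of the universal abelian variety, and then observe that the asserted diagram commutes by construction. First I would recall the definition of $u_B$: a point $Z \in \mathfrak{H}_g$ determines the polarized abelian variety $A_Z = \IC^g/\Lambda_Z$ with lattice $\Lambda_Z = D\IZ^g \oplus Z\IZ^g$ (of polarization type $D$, since $Y>0$), together with its level-$\ell$-structure; then $u_B(Z)$ is, by definition, the point of $\mathbb{A}_g$ classifying this datum. Because $\ell$ was fixed large enough (so that $\IA_g$ is a fine moduli space), the relevant arithmetic group $\Gamma \subseteq \sp{2g}{\IZ}$ is torsion-free and acts freely on $\mathfrak{H}_g$, so $u_B$ realizes $\mathbb{A}_g^{\mathrm{an}}$ as $\Gamma\backslash\mathfrak{H}_g$; since $\mathfrak{H}_g$ is contractible this is the universal covering, a uniformization in the category of complex spaces. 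All of this is classical (e.g.\ Mumford's book on abelian varieties, Birkenhake--Lange, or Shimura).

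Next I would recall the map $u$ of \eqref{EqUnifUniversalAbVar}. A pair $(z,Z) \in \IC^g \times \mathfrak{H}_g$ is sent to the point over $u_B(Z)$ given by the class of $z$ in $A_Z = \IC^g/\Lambda_Z$; concretely this is written down via Riemann theta functions, which (after replacing the polarization by a suitable multiple) embed $A_Z$ into projective space and depend holomorphically on $Z$, so $u$ is holomorphic. The group $\IZ^{2g}\rtimes\Gamma$ acts on $\IC^g\times\mathfrak{H}_g$, the lattice part $\IZ^{2g}$ acting fiberwise by translations $z \mapsto z + Dm + Zn$ and $\Gamma$ acting by the usual symplectic action on $\mathfrak{H}_g$ together with the attached automorphy action on $\IC^g$; this action is free (freeness on $\mathfrak{H}_g$ forces the $\Gamma$-part to be trivial, and then a nonzero lattice vector cannot fix $z$) with quotient $\mathfrak{A}_g^{\mathrm{an}}$. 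As $\IC^g\times\mathfrak{H}_g$ is contractible, $u$ is the universal covering of $\mathfrak{A}_g^{\mathrm{an}}$, hence a uniformization in the category of complex spaces.

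Finally I would check commutativity. For $(z,Z)\in\IC^g\times\mathfrak{H}_g$ the point $u(z,Z)$ lies, by construction, in the fiber $A_Z$ of $\pi$ over $u_B(Z)$, so $\pi(u(z,Z)) = u_B(Z)$; on the other hand $\tilde{\pi}(z,Z) = Z$, so $u_B(\tilde{\pi}(z,Z)) = u_B(Z)$. Hence $\pi\circ u = u_B\circ\tilde{\pi}$. That $\tilde{\pi}$ is the restriction of an algebraic map is immediate: it is the restriction to $\IC^g\times\mathfrak{H}_g$ of the linear projection $\IC^g\times M_{g\times g}(\IC)\to M_{g\times g}(\IC)$.

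There is no genuine obstacle here; the content is entirely a matter of recalling the standard uniformizations and matching conventions. The only points needing care are (a) using that the fixed level $\ell$ is large enough for $\Gamma$ to act freely, so that $u$ and $u_B$ are honest covering maps rather than merely quotient maps by a group with fixed points, and (b) choosing compatible normalizations of $\Lambda_Z$ and of the symplectic/automorphy actions so that $u$ and $u_B$ are equivariant; with those conventions in place the lemma follows.
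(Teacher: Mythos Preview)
Your proposal is correct. The paper does not give a separate proof of this lemma: it is stated as a summary of the constructions recalled immediately before it (the uniformizations $u_B$ and $u$ via theta functions), so the commutativity and the nature of $\tilde{\pi}$ are taken as evident from those definitions. Your write-up simply makes explicit what the paper leaves implicit, and is in the same spirit.
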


\subsection{Functional transcendence for $\mathfrak{A}_g$}
The following theorem, known as the \textit{weak mixed Ax--Schanuel
  theorem for $\mathfrak{A}_g$}, was
proved by the first-named author.
\begin{theorem}[{\!\!\cite[Theorem~3.5]{GaoMixedAS}}]\label{ThmAS}
Let $\tilde{Z}$ be an irreducible complex analytic 
subset of $\IC^g \times\mathfrak{H}_g$. Then we have
\[
\dim \tilde{Z}^{\mathrm{Zar}} + \dim u(\tilde{Z})^{\mathrm{Zar}} \ge \dim \tilde{Z} + \dim u(\tilde{Z})^{\mathrm{biZar}}.
\]
\end{theorem}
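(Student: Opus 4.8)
The plan is to follow the monodromy-and-counting paradigm of Pila--Tsimerman and Mok--Pila--Tsimerman for Ax--Schanuel on pure Shimura varieties, reducing the mixed statement for $\pi\colon\mathfrak{A}_g\to\mathbb{A}_g$ to two external inputs: the weak Ax--Schanuel theorem for the pure moduli space $\mathbb{A}_g$ (Mok--Pila--Tsimerman), and Ax's theorem for the exponential map of an abelian variety, applied along the fibres. The structural fact underpinning everything is that the bi-algebraic subvarieties of $\mathfrak{A}_g$ are precisely the weakly special ones, so that $u^{-1}$ of a bi-algebraic $Y$ is a countable union of $\IQ$-group-orbits in $\IC^g\times\mathfrak{H}_g$, and $Y\to\pi(Y)$ is, up to translation by a constant-Betti section, an abelian subscheme over a bi-algebraic subvariety $\pi(Y)$ of $\mathbb{A}_g$. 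First I would reduce to $u(\tilde{Z})^{\mathrm{biZar}}=\mathfrak{A}_g$: letting $\widetilde{Y}\subseteq\IC^g\times\mathfrak{H}_g$ be the irreducible algebraic component of $u^{-1}(u(\tilde{Z})^{\mathrm{biZar}})$ containing $\tilde{Z}$ and restricting $u$ to $\widetilde{Y}$ leaves both sides unchanged, so it suffices to prove $\dim\tilde{Z}^{\mathrm{Zar}}+\dim u(\tilde{Z})^{\mathrm{Zar}}\ge\dim\tilde{Z}+\dim\mathfrak{A}_g$.

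Next I would fibre over the base. Put $\tilde{Z}_0=\tilde{\pi}(\tilde{Z})\subseteq\mathfrak{H}_g$ (or its analytic closure) and $Z_0=u_B(\tilde{Z}_0)$; one checks that $\tilde{\pi}(\tilde{Z}^{\mathrm{Zar}})$ is Zariski dense in $\tilde{Z}_0^{\mathrm{Zar}}$ and that $u(\tilde{Z})^{\mathrm{biZar}}=\mathfrak{A}_g$ forces $Z_0^{\mathrm{biZar}}=\mathbb{A}_g$. Applying the pure weak Ax--Schanuel theorem for $\mathbb{A}_g$ to $\tilde{Z}_0$ gives
\[
\dim\tilde{Z}_0^{\mathrm{Zar}}+\dim Z_0^{\mathrm{Zar}}\ \ge\ \dim\tilde{Z}_0+\dim\mathbb{A}_g .
\]
It then remains to add the fibre contribution: over a generic $s_0\in Z_0$ with preimage $\tilde{s}_0\in\tilde{Z}_0$, the fibre $\tilde{Z}_{\tilde{s}_0}\subseteq\IC^g$ maps under $u_{s_0}\colon\IC^g\to\mathcal{A}_{s_0}$ into a single abelian variety, and the fibres of $\tilde{Z}^{\mathrm{Zar}}$ over $\tilde{Z}_0^{\mathrm{Zar}}$ and of $u(\tilde{Z})^{\mathrm{Zar}}$ over $Z_0^{\mathrm{Zar}}$ contain the Zariski closures of $\tilde{Z}_{\tilde{s}_0}$ and of $u_{s_0}(\tilde{Z}_{\tilde{s}_0})$ respectively.

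Now I would apply Ax's abelian theorem fibrewise, $\dim\overline{\tilde{Z}_{\tilde{s}_0}}^{\,\mathrm{Zar}}+\dim\overline{u_{s_0}(\tilde{Z}_{\tilde{s}_0})}^{\,\mathrm{Zar}}\ge\dim\tilde{Z}_{\tilde{s}_0}+\dim\langle u_{s_0}(\tilde{Z}_{\tilde{s}_0})\rangle$, with $\langle\,\cdot\,\rangle$ the smallest translate of an abelian subvariety of $\mathcal{A}_{s_0}$ containing the set. Because $u(\tilde{Z})^{\mathrm{biZar}}=\mathfrak{A}_g$ and bi-algebraic subvarieties over $Z_0=\mathbb{A}_g$ have fibres that are translates of abelian subvarieties by constant-Betti sections, while the algebraic monodromy of $R^1\pi_*\IZ$ over $Z_0$ is all of $\mathrm{Sp}_{2g}$ (since $Z_0^{\mathrm{biZar}}=\mathbb{A}_g$), no proper abelian subvariety of $\mathcal{A}_{s_0}$ can be monodromy-stable and contain the fibre, so $\langle u_{s_0}(\tilde{Z}_{\tilde{s}_0})\rangle=\mathcal{A}_{s_0}$ and the fibrewise bound reads $\cdots\ge\dim\tilde{Z}_{\tilde{s}_0}+g$. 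Combining this with the base inequality, the identities $\dim\tilde{Z}=\dim\tilde{Z}_0+\dim\tilde{Z}_{\tilde{s}_0}$ and $\dim\mathfrak{A}_g=\dim\mathbb{A}_g+g$, and the fibration estimates for $\dim\tilde{Z}^{\mathrm{Zar}}$ and $\dim u(\tilde{Z})^{\mathrm{Zar}}$, yields the claimed inequality.

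The hardest point is the genuinely relative nature of the fibre step: Ax's theorem must be applied not to one analytic subset of one $\IC^g$ but to a family over $\tilde{Z}_0$, with uniform control of Zariski closures, and one must actually identify $u(\tilde{Z})^{\mathrm{biZar}}$ with the weakly special envelope predicted by the fibrewise subgroups and the monodromy --- this is exactly where the classification of bi-algebraic (weakly special) subvarieties of $\mathfrak{A}_g$ and the bigness of monodromy over sub-Shimura subvarieties are essential, and where the mixed (unipotent) directions make the argument markedly more delicate than in the pure case. If one wishes to dispense with the pure Mok--Pila--Tsimerman input, the alternative is a self-contained o-minimal argument: fix a fundamental set $\mathcal{F}$ for $u$ definable in $\IRanexp$, note that $\{\gamma\in\IZ^{2g}\rtimes\mathrm{Sp}_{2g}(\IZ):\dim(\gamma\tilde{Z}\cap\tilde{Z}^{\mathrm{Zar}})=\dim\tilde{Z}\}$ becomes definable after intersecting with $\mathcal{F}$, use the Pila--Wilkie counting theorem against a volume lower bound (Hwang--To-type estimates on $\mathfrak{H}_g$, linear volume growth in $\IC^g$) to produce a Zariski-dense set of such $\gamma$ in a positive-dimensional $\IQ$-subgroup, and conclude that $\tilde{Z}^{\mathrm{Zar}}$ is stable under that subgroup, which forces the dimension bound; this route is heavier, so I would attempt the reduction first.
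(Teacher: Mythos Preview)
The paper does not prove this theorem at all: it is quoted as an external input from \cite[Thm.~3.5]{GaoMixedAS} and is used as a black box in the proof of Lemma~\ref{LemmaZarClosureinXdeg1}. There is therefore no proof in the paper to compare your attempt against.

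As for your sketch itself, the reduction to $u(\tilde Z)^{\mathrm{biZar}}$ equal to the ambient and the intention to invoke the pure Ax--Schanuel of Mok--Pila--Tsimerman on the base are fine, but the additive combination of the base inequality with a fibrewise application of Ax's abelian theorem has a genuine gap that is more than a technicality. After the base step you need
\[
\bigl(\dim\tilde Z^{\mathrm{Zar}}-\dim\tilde Z_0^{\mathrm{Zar}}\bigr)+\bigl(\dim u(\tilde Z)^{\mathrm{Zar}}-\dim Z_0^{\mathrm{Zar}}\bigr)\ \ge\ \bigl(\dim\tilde Z-\dim\tilde Z_0\bigr)+g,
\]
and you try to obtain this by bounding the left-hand differences from below via the Zariski closures of the analytic fibres $\tilde Z_{\tilde s_0}$ and $u_{s_0}(\tilde Z_{\tilde s_0})$. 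But a point $\tilde s_0$ generic in the analytic set $\tilde Z_0$ is typically \emph{not} generic in the algebraic set $\tilde Z_0^{\mathrm{Zar}}$; by upper semicontinuity the fibre of $\tilde Z^{\mathrm{Zar}}$ there can be strictly larger than the generic algebraic fibre, so the containment ``fibres of $\tilde Z^{\mathrm{Zar}}$ contain $\overline{\tilde Z_{\tilde s_0}}^{\mathrm{Zar}}$\,'' gives the inequality in the wrong direction. The same mismatch of genericity obstructs the claim $\langle u_{s_0}(\tilde Z_{\tilde s_0})\rangle=\cA_{s_0}$: full monodromy of $Z_0^{\mathrm{Zar}}$ rules out a \emph{flat} proper abelian subscheme over $Z_0^{\mathrm{Zar}}$, but does not by itself prevent the envelope at a particular $s_0\in Z_0$ from being a proper coset. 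This is precisely why the proof in \cite{GaoMixedAS} does not proceed by splitting into base and fibre; it runs an o-minimal point-counting argument directly on the mixed uniformisation, together with the structure theory of weakly special subvarieties of mixed Shimura varieties. Your alternative self-contained route via a definable fundamental set, Pila--Wilkie, and volume growth is in fact much closer in spirit to what is actually carried out there.
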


\subsection{The universal uniformized Betti map}
The following Betti map has been proved to be important in several Diophantine problems.

Consider the real-algebraic isomorphism
\begin{equation}\label{EqComplexStrOfX2g}
\begin{array}{cccc}
 \mathbb{R}^g \times \mathbb{R}^g \times \mathfrak{H}_g & \xrightarrow{\sim} & \mathbb{C}^g \times \mathfrak{H}_g, \\
 (a,b,Z) & \mapsto & (Da+Zb, Z)
\end{array}.
\end{equation}
The \textit{universal uniformized Betti map} is the semi-algebraic map
\begin{equation}\label{EqUnivBettiMap}
\tilde{b} \colon \IC^{g} \times \mathfrak{H}_g \rightarrow \IR^{2g},
\end{equation}
which is the inverse of \eqref{EqComplexStrOfX2g} composed with the natural projection $ \mathbb{R}^g \times \mathbb{R}^g \times \mathfrak{H}_g \rightarrow \IR^{2g}$.


\section{The Degeneracy Locus}
\label{secdeglocus}
\subsection{Degeneracy loci}
An important notion in our approach to prove the Relative
Manin--Mumford Conjecture is the \textit{$t$-th degeneracy locus}
introduced by the first-named author in \cite[Definition~1.6]{GaoBettiRank}.
In this paper, we need the cases $t = 0$ and $t = 1$.

Let $g\ge 1$. We consider $\mathfrak{A}_g$ as a smooth, irreducible,
quasi-projective variety defined over $\IC$. Recall that $\pi\colon
\mathfrak{A}_g\rightarrow\IA_g$ is the structural morphism.

\begin{definition}
Let $X$ be an irreducible subvariety of $\mathfrak{A}_g$. Let $t \in \IZ$. The $t$-th degeneracy locus, denoted by $X^{\mathrm{deg}}(t)$, is the union
\begin{equation}\label{EqDefnDegLocus}
X^{\mathrm{deg}}(t) = \bigcup_{\substack{Y \subseteq X\text{ irreducible,}~ \dim Y > 0 \\ \dim Y^{\mathrm{biZar}} - \dim \pi(Y)^{\mathrm{biZar}} < \dim Y + t}} Y.
\end{equation}
\end{definition}
We refer to $\mathsection$\ref{SectionBiAlg} for the notations
$Y^{\mathrm{biZar}}$ and $\pi(Y)^{\mathrm{biZar}}$. However, we will
not need this until $\mathsection$\ref{SectionAppAS}. Notice that
\eqref{EqDefnDegLocus} is not precisely \cite[Definition~1.6]{GaoBettiRank}, but these two definitions are equivalent for $X
\subseteq \mathfrak{A}_g$ by \cite[Corollary~5.4]{GaoBettiRank}. It is proved in \cite{GaoBettiRank} that $X$ is non-degenerate, \textit{i.e.} the generic Betti rank of $X$ is $2\dim X$, if and only if $X^{\mathrm{deg}}(0) \not= X$.

An immediate consequence of the definition is $X^{\mathrm{deg}}(0) \subseteq X^{\mathrm{deg}}(1)$.

The degeneracy locus is a possibly infinite union of Zariski closed
subsets. Nevertheless we have the following theorem.
\begin{theorem}[{\!\!\cite[Theorem~1.8]{GaoBettiRank}}]
   \label{ThmDegLocusZarClosed}
   The degeneracy locus $X^{\mathrm{deg}}(t)$ is  Zariski closed
   in $X$ for all $t$.
\end{theorem}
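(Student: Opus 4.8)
The plan is to reduce the statement to a Noetherianity argument once one knows that the union defining $X^{\mathrm{deg}}(t)$ is, in a suitable sense, ``uniform in families''. The key observation is that the condition cutting out which $Y\subseteq X$ enter the union, namely $\dim Y^{\mathrm{biZar}}-\dim\pi(Y)^{\mathrm{biZar}}<\dim Y+t$, is controlled by bi-algebraic data on $\IA_g$ and $\mathfrak{A}_g$, and there are only countably many bi-algebraic subvarieties (they are images under $u$ of algebraic subsets of $\IC^g\times M_{g\times g}(\IC)$ defined over $\overline{\IQ}$, roughly speaking). More usefully, for a fixed pair of bi-algebraic subvarieties $B\subseteq\mathfrak{A}_g$ and $B'\subseteq\IA_g$ with $\pi(B)\subseteq B'$, the locus
\[
Z_{B,B'}=\bigcup_{\substack{Y\subseteq X\cap B\text{ irreducible},~\dim Y>0\\ Y^{\mathrm{biZar}}=B,~\pi(Y)^{\mathrm{biZar}}=B',~\dim B-\dim B'<\dim Y+t}}Y
\]
is a finite union of Zariski-closed subsets: indeed the inequality then reads $\dim Y>\dim B-\dim B'-t$, a lower bound on $\dim Y$, and the $Y$'s in question are irreducible components (of the appropriate dimension) of intersections $X\cap B\cap(\text{fibre-type conditions})$, of which there are finitely many. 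So $X^{\mathrm{deg}}(t)$ is a \emph{countable} union of Zariski-closed subsets of $X$. First I would make this bookkeeping precise, citing the relevant structure results for bi-algebraic subvarieties from \cite{GaoMixedAS} and the already-quoted \cite[Corollary~5.4]{GaoBettiRank} that equates the two definitions of the degeneracy locus.

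The second, and genuinely substantive, step is to promote the countable union to a finite one. Here the natural tool is the weak mixed Ax--Schanuel theorem, Theorem~\ref{ThmAS}. The idea, following \cite[Proposition~1.10]{GaoBettiRank} and the strategy of \cite{GaoMixedAS}, is to work on the uniformizing space: pull $X$ back under $u\colon\IC^g\times\mathfrak{H}_g\to\mathfrak{A}_g^{\mathrm{an}}$ and stratify $u^{-1}(X)$ by the dimension of the Zariski closure $\tilde Z^{\mathrm{Zar}}$ of the analytic germ through each point. For a point lying on a positive-dimensional $Y\subseteq X$ with $\dim Y^{\mathrm{biZar}}-\dim\pi(Y)^{\mathrm{biZar}}<\dim Y+t$, Theorem~\ref{ThmAS} applied to a suitable analytic component $\tilde Z$ of $u^{-1}(Y)$ forces $\tilde Z^{\mathrm{Zar}}$ to be ``small'' relative to $u(\tilde Z)^{\mathrm{Zar}}=Y$ — more precisely it forces the germ to be non-transverse to the bi-algebraic foliation in a way measured exactly by the degeneracy defect. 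One then shows that the set of such points is semi-algebraic (it is defined by rank conditions on differentials of the semi-algebraic maps $u$, $\tilde b$ and $\tilde\pi$ of $\mathsection$\ref{SectionBiAlg}, together with dimension counts of Zariski closures, which are definable), hence its image in $X$ is a constructible — in fact, after taking closures, Zariski-closed — subset. Combining this with Step~1: a countable ascending union of Zariski-closed sets inside the Noetherian space $X$ stabilises, and the limit is Zariski-closed; but we have also exhibited one Zariski-closed set containing the whole union, so the union equals that set.

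The main obstacle, as usual in this circle of ideas, is Step~2: making the passage from ``each $Y$ is constrained by Ax--Schanuel'' to ``the full union is Zariski-closed'' without circularity. The subtlety is that a priori different $Y$'s have different bi-algebraic closures $B,B'$ of unbounded dimension, so one cannot simply quote finiteness of some fixed family; one needs the Ax--Schanuel inequality to bound $\dim B-\dim\pi(B)$ in terms of $\dim Y$ uniformly, which is precisely what Theorem~\ref{ThmAS} delivers when read as $\dim\tilde Z^{\mathrm{Zar}}\ge\dim\tilde Z+\dim u(\tilde Z)^{\mathrm{biZar}}-\dim u(\tilde Z)^{\mathrm{Zar}}$. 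Turning this pointwise estimate into the statement that the ``bad locus'' is definable in $\IRanexp$ — so that o-minimality gives finiteness of the stratification — is the technical heart, and is essentially the content of the argument in \cite[\S5]{GaoBettiRank}; I would follow that template closely, isolating the mixed-period-map incidence variety, checking its definability, and applying the fibre-dimension theorem in the o-minimal setting. Once definability is in hand, Noetherianity of $X$ closes the argument.
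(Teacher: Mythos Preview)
The paper does not actually prove this theorem: it is quoted verbatim from \cite[Theorem~1.8]{GaoBettiRank} and used as a black box. So there is no ``paper's own proof'' to compare against here; the relevant comparison is with the argument in \cite{GaoBettiRank}.

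On the substance of your sketch, there is a genuine gap in the logical architecture. Your Step~1 expresses $X^{\mathrm{deg}}(t)$ as a \emph{countable} union of (purportedly) closed sets $Z_{B,B'}$, and then you invoke Noetherianity to conclude closedness. But Noetherianity of $X$ does \emph{not} imply that a countable union of closed sets is closed: the ascending chain condition controls ascending \emph{chains}, not arbitrary countable unions (think of countably many points on a curve). You seem to sense this and fall back on Step~2 to produce ``one Zariski-closed set containing the whole union'', but then the argument becomes circular: if Step~2 already produces a closed set \emph{equal} to $X^{\mathrm{deg}}(t)$, Step~1 is irrelevant; if it only produces a closed set \emph{containing} $X^{\mathrm{deg}}(t)$, you still owe the reverse inclusion. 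Separately, in Step~1 the sets $Z_{B,B'}$ as you define them are not obviously closed, since the condition $Y^{\mathrm{biZar}}=B$ (equality, not containment) is an open condition on $Y$, not a closed one.

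What is missing is the intermediate notion that makes the finiteness work: one passes to \emph{maximal} (in \cite{GaoBettiRank}, ``weakly optimal'') $Y$'s contributing to the union, shows that every point of $X^{\mathrm{deg}}(t)$ lies on such a maximal $Y$, and then proves --- via the Ax--Schanuel inequality and an o-minimal definability/finiteness argument in the style you allude to --- that these maximal $Y$'s belong to \emph{finitely many} algebraic families parametrized by a constructible base. The degeneracy locus is then the image of a proper map from a finite union of incidence varieties, hence Zariski closed. Your Step~2 gestures at the o-minimal part but skips the reduction to optimal subvarieties, which is exactly what converts ``countable'' into ``finite'' and makes the argument go through.
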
 

In the current paper, we apply this theorem in
$\mathsection$\ref{SectionConclusion}. Moreover, if $X$ is defined
over $\IQbar$, then $X^{\mathrm{deg}}(t)$ is defined over $\IQbar$ by
\cite[Theorem~4.2.4]{GaoHDR}; however we will not use this fact in the
current work. 

\subsection{The $0$-th degeneracy locus and the height inequality}
Let $g\ge 1$. In this and the next section we consider $\mathfrak{A}_g$ as a smooth,
irreducible, quasi-projective variety defined over $\IQbar$.

The $0$-th degeneracy locus  is closely related
to the following height inequality proved by both authors of the
current paper and Dimitrov.

Below we fix a height function $h\colon S(\IQbar)\rightarrow [0,\infty)$
and a fiberwise canonical height $\hat h\colon
\mathfrak{A}_g(\IQbar)\rightarrow[0,\infty)$
as in \cite{DGHUnifML}. 

\begin{theorem}[{\!\!\cite[Theorem~1.6 and Theorem~B.1]{DGHUnifML}}]\label{ThmHtInequality}
  Let $X$ be an irreducible subvariety of $\mathfrak{A}_g$ defined over $\IQbar$. Assume $X_\IC^{\mathrm{deg}}(0)$ is not Zariski dense in $X$. Then there exist a constant $c>0$ and a Zariski open dense subset $U$ of $X$ such that
  \begin{equation}\label{EqHtInequality}
    h(\pi(x)) \le c ( \hat{h}(x) + 1) \quad \text{for all }x \in U(\IQbar).
  \end{equation} 
\end{theorem}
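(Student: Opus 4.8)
The plan is to translate the asserted inequality into a positivity statement for (arithmetic) line bundles on a projective model of $X$, and then to feed in the first-named author's analysis of the degeneracy locus to obtain the required geometric positivity.

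\textbf{Step 1: line-bundle reformulation.} Fix a projective compactification $\overline{\IA_g}$ with an ample line bundle $\overline{L_S}$ realizing $h$, and a compatible (e.g.\ toroidal) projective compactification $\overline{\mathfrak A_g}$ on which the cubical canonical metric realizing $\hat h$ extends to a continuous \emph{semipositive} metrized line bundle $\overline{\hat L}$, whose curvature current is the Betti form $\omega$ (semipositive on $\overline{\mathfrak A_g}$ and Kähler on each fiber). Choose a smooth projective $\overline X$ containing $X$ as a dense open, with morphisms to $\overline{\IA_g}$ and $\overline{\mathfrak A_g}$ extending $\pi|_X$ and the inclusion; passing to a resolution we may assume $\overline X$ smooth. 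It then suffices to produce $N\in\NN$ and a nonempty Zariski open $U\subseteq X$ on which $N\hat h(x)-h(\pi(x))\ge -c'$, since that gives $h(\pi(x))\le c(\hat h(x)+1)$ with $c=\max\{N,c'\}$. In turn this follows once the arithmetic line bundle $\overline M_N := N\,\overline{\hat L}|_{\overline X} - \pi^*\overline{L_S}|_{\overline X}$ is \emph{arithmetically big} for some $N$: a big arithmetic line bundle on a projective arithmetic variety has height bounded below off a proper Zariski closed subset (write it as arithmetically ample plus effective), and that closed subset becomes the complement of $U$; unwinding, $h_{\overline M_N}(x)=N\hat h(x)-h(\pi(x))+O(1)$.

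\textbf{Step 2: geometric bigness via the degeneracy locus.} The generic fibre $M_N=N\,\hat L|_{\overline X}-\pi^*L_S|_{\overline X}$ must be shown big for $N\gg0$. Since $\omega\ge0$, the class $\hat L|_{\overline X}$ is nef, so by the criterion ``nef with positive top self-intersection $\Rightarrow$ big'' it is enough to prove $\int_{\overline X}\omega^{\dim X}>0$; granting this, $M_N$ is big for $N\gg0$ because a big class minus $\tfrac1N$ times a fixed class stays big. Now $\int_{\overline X}\omega^{\dim X}>0$ amounts to $\omega$ being generically nondegenerate along $X$, and by \cite{GaoBettiRank} — through the analysis tying the Betti form, the generic Betti rank, and the degeneracy locus together, ultimately resting on the Ax–Schanuel Theorem~\ref{ThmAS} — this is equivalent to $X_\IC^{\mathrm{deg}}(0)$ not being Zariski dense in $X$ (recall $X_\IC^{\mathrm{deg}}(0)$ is Zariski closed by Theorem~\ref{ThmDegLocusZarClosed}, so non-density means it is a proper closed subset). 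This is exactly our hypothesis, and this step — importing the functional-transcendence / mixed-Shimura-variety input of \cite{GaoBettiRank} — is the one I expect to be the main obstacle.

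\textbf{Step 3: from geometric to arithmetic bigness, and conclusion.} With $M_N$ geometrically big, I upgrade to arithmetic bigness of $\overline M_N$ using the arithmetic bigness criterion of Yuan (arithmetic Hilbert–Samuel) for a difference of two arithmetically nef line bundles: here $N\,\overline{\hat L}|_{\overline X}$ and $\pi^*\overline{L_S}|_{\overline X}$ are both arithmetically nef (semipositive metrics, nef generic fibres), and the criterion reduces bigness of the difference, for $N$ large, to positivity of $\widehat{\deg}\big(\overline{\hat L}|_{\overline X}^{\,\dim X+1}\big)$ together with the geometric bigness already established; the arithmetic self-intersection is positive because $\hat h\ge0$ and the Archimedean contribution $\int_{\overline X}\omega^{\dim X}$ is positive by Step~2. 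Finally, the bounded-below-off-a-proper-closed-subset property of big arithmetic line bundles produces $Z\subsetneq X$ and $c'$ with $h_{\overline M_N}(x)\ge -c'$ on $(X\setminus Z)(\IQbar)$; setting $U=X\setminus Z$ and combining with Step~1 yields \eqref{EqHtInequality}. One must be careful throughout with the singularities of $X$ and $\overline X$ (work on the resolution, noting the statement is insensitive to shrinking $U$) and with the mere semipositivity — not strict positivity — of the canonical metric at the boundary of the toroidal compactification, which is precisely why the geometric input of Step~2 is indispensable rather than cosmetic.
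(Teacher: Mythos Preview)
The paper does not actually prove this theorem: it is imported as a citation of \cite[Theorems~1.6 and B.1]{DGHUnifML}, and the only argument supplied in the surrounding text is the observation (via \cite[Theorem~1.7]{GaoBettiRank}, ultimately the mixed Ax--Schanuel theorem) that the hypothesis ``$X_\IC^{\mathrm{deg}}(0)$ not Zariski dense in $X$'' is equivalent to the hypothesis ``$X$ is non-degenerate'' appearing in the cited reference. Your proposal goes further and sketches the proof of the cited result itself, and the architecture --- reduce \eqref{EqHtInequality} to arithmetic bigness of $N\overline{\hat L}-\pi^*\overline{L_S}$ on a compactified model, obtain geometric bigness from $\int_X\omega^{\dim X}>0$ via the degeneracy-locus/Betti-rank equivalence, then upgrade to arithmetic bigness by a Yuan-type inequality --- is indeed the strategy of \cite[Appendix~B]{DGHUnifML}. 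Two caveats are worth flagging. First, the assertion in Step~1 that the cubical canonical metric extends to a \emph{continuous semipositive} metric over a toroidal compactification is precisely where most of the technical work in \cite[Appendix~B]{DGHUnifML} lies; it is not a formality. Second, the justification in Step~3 that $\widehat{\deg}\bigl(\overline{\hat L}|_{\overline X}^{\,\dim X+1}\bigr)>0$ follows from $\hat h\ge 0$ together with $\int_X\omega^{\dim X}>0$ is too quick: the arithmetic self-intersection is not simply a sum of these two contributions, and the actual argument in \cite{DGHUnifML} organizes the Yuan-type estimate differently. So your sketch is correct at the strategic level and matches the source, but compresses the genuine analytic work that the paper deliberately outsources.
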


Indeed, the assumption of \cite[Theorem~B.1]{DGHUnifML} is that $X$ is
\textit{non-degenerate} as defined by \cite[Definition~B.4]{DGHUnifML}.
If $X$ fails to be non-degenerate, then 
the mixed Ax--Schanuel Theorem for $\mathfrak{A}_g$
by the first-named author~\cite{GaoMixedAS} implies that
$X_\IC^{\mathrm{deg}}(0)$
contains a non-empty  open subset of $X^{\mathrm{an}}$, 
see \cite[Theorem~1.7]{GaoBettiRank} or 
\cite[Theorem~4.3.1]{GaoHDR}.

\subsection{The $1$-st degeneracy locus and Relative Manin--Mumford}
The $1$-st degeneracy locus 
is closely related to the Relative Manin--Mumford Conjecture. In
 $\mathsection$\ref{SectionConclusion} we will deduce Theorem~\ref{MainThm}
in the case $L=\IQbar$ from the following theorem.

\begin{theorem}\label{TheoremFirstDegLocusIsX}
Let $X$ be an irreducible subvariety of $\mathfrak{A}_g$ defined over
$\IQbar$ with $\dim X\ge 1$. If $X(\IQbar) \cap (\mathfrak{A}_g)_{\mathrm{tor}}$ is Zariski dense in $X$, then $X_\IC^{\mathrm{deg}}(1)$ is Zariski dense in $X_\IC$.
\end{theorem}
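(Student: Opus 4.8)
The plan is to follow the Pila--Zannier strategy using the universal uniformized Betti map $\tilde b$ from \eqref{EqUnivBettiMap}, combined with a case distinction driven by Theorem~\ref{ThmHtInequality}. We want to show that if $X_\IC^{\mathrm{deg}}(1)$ is \emph{not} Zariski dense in $X_\IC$, then $X(\IQbar)\cap(\mathfrak{A}_g)_{\mathrm{tor}}$ cannot be Zariski dense in $X$; the contrapositive is the claim. So suppose $X^{\mathrm{deg}}(1)$ is not dense. Since $X^{\mathrm{deg}}(0)\subseteq X^{\mathrm{deg}}(1)$, neither is $X^{\mathrm{deg}}(0)$, so Theorem~\ref{ThmHtInequality} applies: there is a constant $c>0$ and a Zariski open dense $U\subseteq X$ with $h(\pi(x))\le c(\hat h(x)+1)$ for all $x\in U(\IQbar)$. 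Replacing $X$ by $X\setminus X^{\mathrm{deg}}(1)$ (an argument by Noetherian induction on $\dim X$, handling the lower-dimensional pieces separately and noting the statement is vacuous or easy in low dimension) we may assume $X=U$ and that $X^{\mathrm{deg}}(1)$ is nowhere dense. Now on torsion points $x$ of $X$ one has $\hat h(x)=0$, hence $h(\pi(x))\le c$, so all such $x$ lie over a set of bounded height in $S=\IA_g$; together with the bounded degree of the field of definition this forces $\pi(x)$ to range over a set that meets only finitely many number fields — but more usefully it gives us uniform control to run the counting argument.

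The core counting step: fix $s\in S(\IQbar)$ with $\pi(x)=s$ of bounded height and a torsion point $x\in\cA_s$ of order $N$. Choose a point $\tilde s\in\mathfrak{H}_g$ over $s$ and the corresponding lattice point $\tilde x\in\IC^g\times\{\tilde s\}$ over $x$; then $\tilde b(\tilde x)\in\IR^{2g}$ has all coordinates in $\frac1N\IZ$, i.e. it is a rational point of denominator $N$. The large Galois orbit input — via David's lower bound \cite{DavidMinHaut} or the R\'emond--Gaudron refinement \cite{RemondGaloisBound,GRGaloisBound} of Masser--W\"ustholz, combined crucially with the height bound $h(s)\le c$ from Theorem~\ref{ThmHtInequality} to control the contribution of the abelian variety's own height — shows that the Galois orbit of $x$ over a fixed number field has size $\gg N^{\kappa}$ for some $\kappa>0$, uniformly in $s$ of bounded height. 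These conjugates all lie in $X$ and map under a suitable restriction of $\tilde b$ (on a bounded fundamental domain, which is definable in $\RR_{\mathrm{an,exp}}$) to distinct rational points of denominator $N$ in a definable set $Z\subseteq\IR^{2g}\times(\text{bounded domain})$ whose fibers encode $X$. Apply the Pila--Wilkie counting theorem in the form \cite[Corollary~7.2]{HabeggerPilaENS}: either the number of such rational points is $\ll_\varepsilon N^{\varepsilon}$, contradicting $N^\kappa$ for $N$ large once the torsion points are infinite, or $Z$ contains a positive-dimensional connected semi-algebraic set through one of these points.

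This second alternative is where the functional transcendence enters, and I expect it to be the main obstacle. A positive-dimensional semi-algebraic block in $Z$ produces a positive-dimensional complex algebraic subset $\tilde W\subseteq\IC^g\times\mathfrak{H}_g$ contained in a fiber of $\tilde b$ (equivalently in an algebraic leaf) whose image $W=u(\tilde W)\subseteq X$ satisfies $\dim \tilde W^{\mathrm{Zar}}$ small relative to $\dim W^{\mathrm{biZar}}$; feeding this into the weak mixed Ax--Schanuel Theorem~\ref{ThmAS} gives $\dim u(\tilde W)^{\mathrm{biZar}}-\dim\pi(u(\tilde W))^{\mathrm{biZar}}$ small, precisely of the shape that forces $W$ into $X^{\mathrm{deg}}(1)$ — here one must be careful about the exact numerology ($t=1$ versus $t=0$) and, as in \cite[Proposition~1.10]{GaoBettiRank}, about the fact that the Betti fibers over a fixed base point have dimension $g$, which is what converts "fiber of $\tilde b$" into the inequality defining $X^{\mathrm{deg}}(1)$. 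Since we arranged $X^{\mathrm{deg}}(1)$ to be nowhere dense, this contradicts Zariski density of the torsion points (an infinite set of torsion points, each contributing such a block, would sweep out $X$). Finally one upgrades "finitely many torsion points in the good locus $U$" to a genuine contradiction with Zariski density over all of $X$ by the Noetherian induction set up at the start. The delicate points I anticipate: (i) the uniformity of the Galois lower bound in $s$, which genuinely requires the height bound on $s$ and hence the case division via Theorem~\ref{ThmHtInequality}; (ii) choosing the definable family and fundamental domain so that Pila--Wilkie yields blocks in the correct ambient space; and (iii) the bookkeeping translating an Ax--Schanuel degeneracy into membership in the $1$-st (not $0$-th) degeneracy locus.
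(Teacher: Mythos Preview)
Your overall architecture is the same as the paper's: split on whether $X_\IC^{\mathrm{deg}}(0)$ is Zariski dense, and in the non-dense case run height inequality $\Rightarrow$ large Galois orbits $\Rightarrow$ semi-rational Pila--Wilkie $\Rightarrow$ mixed Ax--Schanuel. The Noetherian-induction/contrapositive packaging is cosmetically different but equivalent.

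There is, however, a genuine misconception at the Ax--Schanuel step. You write that the Pila--Wilkie block yields $\tilde W\subseteq\IC^g\times\mathfrak{H}_g$ \emph{contained in a fiber of $\tilde b$}. That is not what the semi-rational counting theorem \cite[Corollary~7.2]{HabeggerPilaENS} gives. It produces a definable curve $\tilde\gamma\colon[0,1]\to\IC^g\times\mathfrak{H}_g$ whose composition $\tilde b\circ\tilde\gamma$ is merely \emph{semi-algebraic}, i.e.\ the Betti image is a real semi-algebraic arc, in general one-dimensional, not a single point. If the Betti image were a point you would actually land in $X^{\mathrm{deg}}(0)$, not just $X^{\mathrm{deg}}(1)$; but you cannot arrange this.

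Consequently your explanation of why $t=1$ appears (``Betti fibers over a fixed base point have dimension $g$'') is not the operative mechanism. The correct bookkeeping, as in the paper's Lemma~\ref{LemmaZarClosureinXdeg1}, is: the Betti image of $\tilde\gamma$ lies in a complex algebraic curve $A\subset\IC^{2g}$, so the complex-analytic hull $\tilde Z$ of $\tilde\gamma([0,1])$ satisfies
\[
\dim\tilde Z^{\mathrm{Zar}}\le \dim A + \dim\pi(Y)^{\mathrm{biZar}} \le 1+\dim\pi(Y)^{\mathrm{biZar}},
\]
where $Y=u(\tilde Z)^{\mathrm{Zar}}$. Feeding this into Theorem~\ref{ThmAS}, together with $\dim\tilde Z\ge 1$, the two ``$+1$'' terms cancel and one is left with exactly $\dim Y^{\mathrm{biZar}}-\dim\pi(Y)^{\mathrm{biZar}}\le\dim Y$, i.e.\ $Y\subseteq X^{\mathrm{deg}}(1)$. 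So the ``$1$'' in $X^{\mathrm{deg}}(1)$ is the dimension of the semi-algebraic Betti arc, not anything about the fiber dimension $g$. Once you correct this step, your sketch coincides with the paper's proof.
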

A large part of this paper
($\mathsection$\ref{SectionLGO}--\ref{SectionAppAS}) is devoted to the
proof of Theorem~\ref{TheoremFirstDegLocusIsX}.

Our proof of Theorem~\ref{TheoremFirstDegLocusIsX} will be arranged as
follows. We divide into two cases: either $X_\IC^{\mathrm{deg}}(0)$ is
Zariski dense in $X$, or $X_\IC^{\mathrm{deg}}(0)$ is not Zariski
dense in $X$. In the first case, the conclusion of
Theorem~\ref{TheoremFirstDegLocusIsX} holds true because
$X_\IC^{\mathrm{deg}}(0) \subseteq X_\IC^{\mathrm{deg}}(1)$. In the second
case, we follow the Pila--Zannier method. More precisely, we will
invoke the height inequality \eqref{EqHtInequality} and a result of 
R\'{e}mond \cite[Proposition~2.9]{RemondGaloisBound} to show that the Galois orbit
of each point in $X(\IQbar) \cap (\mathfrak{A}_g)_{\mathrm{tor}}$ is large,
and then apply the semi-rational variant of the Pila--Wilkie counting
theorem \cite[Corollary~7.2]{HabeggerPilaENS}.
One may also use an older result of David
\cite{DavidMinHaut} to deduce largeness of the Galois orbit, as
explained in the appendix.
We will thus
produce a curve whose projection to the Betti fiber is a
semi-algebraic curve. Finally, by the mixed Ax--Schanuel theorem for
$\mathfrak{A}_g$ (Theorem~\ref{ThmAS}) such a curve will be seen to contribute to
$X_\IC^{\mathrm{deg}}(1)$.



\section{Large Galois Orbits}
\label{SectionLGO}

The goal of this section is to prove the following result, claiming
that the Galois orbits of torsion points are \textit{large}.

Let $g\ge 1$. We consider $\mathfrak{A}_g$ as a smooth, irreducible,
quasi-projective variety defined over $\IQbar$.
Let $X$ be an irreducible subvariety of $\mathfrak{A}_g$ defined over
$\IQbar$. Suppose $K\subset\IQbar$ is a number field over which $X$
and $\mathfrak{A}_g$ are defined. For each $x \in X \cap
(\mathfrak{A}_g)_{\mathrm{tor}}$, let $\ord{x}$ denote the order of
$x$.

\begin{proposition}\label{PropLGO}
  Assume that $X_\IC^{\mathrm{deg}}(0)$ is not Zariski dense in $X$. Then there exist constants $c>0$, $\delta>0$, and a Zariski open dense subset $U$ of $X$ such that
  \begin{equation}
    \#\mathrm{Gal}(\IQbar/K) x \ge c \cdot \ord{x}^{\delta} \quad \text{ for all }x \in U(\IQbar) \cap (\mathfrak{A}_g)_{\mathrm{tor}}.
  \end{equation}
\end{proposition}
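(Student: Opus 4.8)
The plan is to combine the height inequality (Theorem~\ref{ThmHtInequality}) with a quantitative lower bound for the degree of the field of definition of a torsion point of an abelian variety over a number field, and then deduce a lower bound for the Galois orbit length. First I would fix the number field $K$ over which $X$ and $\mathfrak{A}_g$ are defined and, after possibly shrinking $X$, assume $X_\IC^{\mathrm{deg}}(0)$ is not Zariski dense in $X$, so that Theorem~\ref{ThmHtInequality} supplies a constant $c_1>0$ and a Zariski open dense $U_1\subseteq X$ with $h(\pi(x))\le c_1(\hat h(x)+1)$ for all $x\in U_1(\IQbar)$. For a torsion point $x$ we have $\hat h(x)=0$, so this gives $h(\pi(x))\le c_1$: the base point $\pi(x)$ lies in a set of bounded height in $\IA_g(\IQbar)$. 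This is the crucial leverage — it turns a torsion point in $X$ into an abelian variety $\cA_{\pi(x)}$ of bounded Faltings/theta height, defined over a field of bounded degree $[K(\pi(x)):K]$ once one also controls the degree, which one does on a suitable open subset since $X\to\IA_g$ has bounded generic degree onto its image (or by a further shrinking of $U_1$ one may assume $\pi|_X$ is quasi-finite onto its image, after slicing; alternatively argue fiber-dimension by fiber-dimension).

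Next I would invoke the isogeny-estimate–based Galois bound. By R\'emond's Proposition~2.9 in \cite{RemondGaloisBound} (equivalently Gaudron--R\'emond \cite{GRGaloisBound}, or David \cite{DavidMinHaut}), for an abelian variety $B$ of dimension $g$ defined over a number field $F$ there are constants $c_2=c_2(g)>0$ and $\delta_2=\delta_2(g)>0$, together with an exponent depending polynomially on $[F:\QQ]$ and on the height of $B$, such that any torsion point $y\in B(\IQbar)$ of order $n$ satisfies
\[
[F(y):F]\ge c_2\,\frac{n^{\delta_2}}{c_3(B,F)},
\]
where $c_3(B,F)$ depends only on $[F:\QQ]$ and $h(B)$. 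Applying this with $B=\cA_{\pi(x)}$, $F=K(\pi(x))$, $y=x$, $n=\ord{x}$, and using that $h(\pi(x))\le c_1$ and $[F:\QQ]\le [K:\QQ]\cdot\deg(\pi|_X)$ are both bounded uniformly over $x\in U_1(\IQbar)\cap(\mathfrak{A}_g)_{\mathrm{tor}}$, the quantity $c_3(B,F)$ is bounded by a constant $c_4$ depending only on $X$, $K$, $g$. Hence $[K(\pi(x))(x):K(\pi(x))]\ge (c_2/c_4)\,\ord{x}^{\delta_2}$.

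Finally I would pass from this field-degree bound to the Galois orbit bound. Since $x\in\mathfrak{A}_g$ and $\mathfrak{A}_g$ is defined over $K$, we have $\#\mathrm{Gal}(\IQbar/K)x=[K(x):K]$, and $[K(x):K]=[K(x):K(\pi(x))]\cdot[K(\pi(x)):K]\ge [K(x):K(\pi(x))]\ge (c_2/c_4)\,\ord{x}^{\delta_2}$. Taking $U=U_1$, $c=c_2/c_4$, and $\delta=\delta_2$ completes the proof. The main obstacle I anticipate is bookkeeping the uniformity: one must ensure that the auxiliary constant $c_3(B,F)$ in R\'emond's estimate genuinely depends only on $[F:\QQ]$ and $h(B)$ (not on finer invariants of $B$ such as a polarization degree that could blow up), and that the degree $[K(\pi(x)):K]$ stays bounded over all relevant $x$ — this is exactly where the height inequality of Theorem~\ref{ThmHtInequality} is indispensable and where the hypothesis on $X_\IC^{\mathrm{deg}}(0)$ enters. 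A secondary technical point is handling the case where $\pi|_X$ is not generically finite (i.e. $X$ dominates a positive-dimensional subvariety of $\IA_g$ with positive-dimensional fibers), which I would address by stratifying $X$ or by noting that $h(\pi(x))$ bounded still forces $\pi(x)$ into a fixed bounded-height, bounded-degree set, so the same isogeny estimates apply fiberwise.
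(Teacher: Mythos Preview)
Your overall strategy matches the paper's: apply Theorem~\ref{ThmHtInequality} to bound $h(\pi(x))$ for torsion $x\in U(\IQbar)$, convert this into a bound on the stable Faltings height of the fiber $\cA_{\pi(x)}$, and then feed this into R\'emond's quantitative Galois bound. However, there is a genuine gap in how you invoke R\'emond.

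You apply the bound over the field $F=K(\pi(x))$ and then assert that $[F:\IQ]$ is bounded, writing ``$[F:\IQ]\le [K:\IQ]\cdot\deg(\pi|_X)$''. This is false: the generic degree of $\pi|_X$ controls the number of preimages of a generic point, not the residue-field degree of a point in the image. Even when $\pi|_X$ is generically finite onto its image, $\pi(x)$ ranges over $\IQbar$-points of a (typically positive-dimensional) variety defined over $K$, and such points have unbounded degree over $K$. Your attempted salvage, that bounded height forces $\pi(x)$ into a ``bounded-degree set'', is also wrong: Northcott requires bounded height \emph{and} bounded degree simultaneously; bounded height alone places no constraint on the degree. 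Since R\'emond's constant $c_3(B,F)$ grows polynomially in $[F:\IQ]$, your lower bound $[K(\pi(x))(x):K(\pi(x))]\ge (c_2/c_4)\,\ord{x}^{\delta_2}$ collapses.

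The fix, which is exactly what the paper does, is to apply R\'emond over $k=K(x)$ rather than $K(\pi(x))$. Over this field the torsion point $x$ is $k$-rational, so R\'emond's \cite[Proposition~2.9]{RemondGaloisBound} gives directly
\[
\ord{x}\le c(g)\bigl([k:\IQ]\max\{1,h_{\mathrm{Fal}}(\cA_{\pi(x)}),\log[k:\IQ]\}^2\bigr)^{\alpha(g)}
\]
for explicit $c(g),\alpha(g)>0$. The height inequality together with Faltings's comparison bounds $h_{\mathrm{Fal}}(\cA_{\pi(x)})$ by a constant depending only on $X$, so this inequality reads $\ord{x}\le c'\,[K(x):\IQ]^{\alpha'}$ (up to logarithmic factors absorbed into the exponent). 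Inverting yields $[K(x):K]\ge c\cdot\ord{x}^{\delta}$ with $c,\delta$ depending only on $X$ and $K$, which is the claim. No control of $[K(\pi(x)):K]$ is needed at any point.
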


The key ingredients of the proof are the height inequality
\cite[Theorems 1.6 and B.1]{DGHUnifML} and a \textit{quantitative} version of Masser's work on  the Galois orbit of torsion points on abelian varieties. In the proof presented here, we use R\'{e}mond's \cite[Proposition~2.9]{RemondGaloisBound}, which
is based on
earlier work of 
Gaudron--R\'{e}mond \cite{GR:Isogeny} itself related to isogeny
estimates of  Masser--W\"{u}stholz~\cite{MW:abelianisog}.
Alternatively, one can use \cite[Corollaire~1.7]{GRGaloisBound}.
In the Appendix, we show how an older result of David \cite{DavidMinHaut} suffices for our purpose. 



\begin{proof}
Let $U$ be the Zariski open dense subset of $X$ obtained from
Theorem~\ref{ThmHtInequality}, and let $c = c(X) >0$ be the constant
from the same theorem. Then
\begin{equation}\label{Eq}
  h(\pi(x)) \le c \qquad \text{for all }x \in U(\IQbar) \cap (\mathfrak{A}_g)_{\mathrm{tor}}
\end{equation}
as the N\'eron--Tate height of a torsion point is $0$.


For $x\in U(\IQbar)\cap (\mathfrak{A}_g)_{\mathrm{tor}}$
the abelian variety $A=\mathfrak{A}_{g,x}$
is  defined over $k=K(x)$ with $K$ as in the beginning of
this section. 

Let $h_{\mathrm{Fal}}(A)$ denote the \textit{stable Faltings height} of
$A$. The additive normalization in $h_{\mathrm{Fal}}$ plays no role in
the current work. 
Because of \eqref{Eq} and
by the fundamental work of Faltings~\cite[$\mathsection 3$ including
 the proof of Lemma~3]{Faltings:ES} (see also
\cite[the remarks below Proposition~V.4.4 and Proposition~V.4.5]{FaltingsChai}), we have $h_{\mathrm{Fal}}(A) \le c'$ for some constant $c' = c'(X)>0$.

Define
\[
\kappa(X) = ((14g)^{64g^2} [k:\IQ] \max\{1,c', \log [k:\IQ]\}^2)^{1024 g^3}.
\]
Then by R\'{e}mond   \cite[Proposition~2.9]{RemondGaloisBound}, we have $\ord{x} \le \kappa(X)^{4g+1}$ because $h_{\mathrm{Fal}}(A) \le c'$. Hence we are done.
\end{proof}


\section{Point Counting}
Let $g\ge 1$ be an integer. In this and the next section we consider
$\mathfrak{A}_g$ as a smooth, irreducible, quasi-projective variety
defined over $\IQbar$.
Let $X$ be an irreducible subvariety of $\mathfrak{A}_g$, as usual defined over
$\IQbar$. Let $K$ be a number field such that both $\mathfrak{A}_g$ and $X$
are defined over $K$. Recall the uniformization $u \colon \IC^g \times \mathfrak{H}_g
\rightarrow \mathfrak{A}_g^{\mathrm{an}}$ from
\eqref{EqUnifUniversalAbVar}.
Let $\tilde{b} \colon \IC^g \times \mathfrak{H}_g \rightarrow \IR^{2g}$ be the universal uniformized Betti map from \eqref{EqUnivBettiMap}; it is a semi-algebraic map.

The goal of this section is to prove the following result. Let $\IRanexp$ denote the  structure generated by the unrestricted
exponential function on the real numbers and the restriction of all
real analytic functions to a hypercube. Then $\IRanexp$ is o-minimal by the
Theorem of van den Dries and Miller \cite{DriesMiller:94} and earlier work of Wilkie \cite{Wilkie:96}.
Throughout this section, the word \textit{definable} will mean definable in $\IRanexp$.

\begin{proposition}\label{PropositionSemiAlgCounting}
  Assume $\dim X\ge 1$ and
  that $X_\IC^{\mathrm{deg}}(0)$ is not Zariski dense in $X$. Then there exists a Zariski open
  dense subset $U$ of $X$ and $B\ge 1$ with the following property. For each $x \in
  U(\IQbar) \cap (\mathfrak{A}_g)_{\mathrm{tor}}$ with
  $\mathrm{ord}(x)\ge B$
  there exist $\tilde\gamma_x \colon
  [0,1]\rightarrow\IC^g\times \mathfrak{H}_g$ continuous and definable with
  the following properties. 
  \begin{enumerate}
  \item[(i)] The map $\tilde\gamma_x$ is non-constant,
    $\tilde\gamma_x|_{(0,1)}$ is real analytic,  and $u(\tilde\gamma_x([0,1]))\subset
    X^{\mathrm{an}}$, 
  \item[(ii)] $u(\tilde\gamma_x(0)) \in \mathrm{Gal}(\IQbar/K)x$, and
  \item[(iii)] $\tilde{b} \circ \tilde{\gamma}_x\colon
    [0,1]\rightarrow\IR^{2g}$ is semi-algebraic. 
  \end{enumerate}
\end{proposition}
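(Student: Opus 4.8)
The plan is to implement the Pila--Zannier strategy, combining the large Galois orbit bound from Proposition~\ref{PropLGO} with the Pila--Wilkie counting theorem in the semi-rational form of \cite[Corollary~7.2]{HabeggerPilaENS}. First I would set up the definable family. Work with the uniformization $u\colon \IC^g\times\mathfrak{H}_g\to\mathfrak{A}_g^{\mathrm{an}}$; over a relatively compact semi-algebraic fundamental domain $\mathcal{F}$ for the action of the arithmetic group (which exists and is definable in $\IRan$, hence in $\IRanexp$, by work of Bakker--Klingler--Tsimerman type reductions, or more classically by Borel's reduction theory for $\mathrm{Sp}_{2g}$), the restriction $u|_{\mathcal{F}}$ is definable. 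Let $\tilde X = (u|_{\mathcal{F}})^{-1}(X^{\mathrm{an}})$; this is a definable set. The key auxiliary object is the definable set obtained by taking the fiber product: for a point $x\in X\cap(\mathfrak{A}_g)_{\mathrm{tor}}$ lying over $s\in\IA_g$, and a choice of $\tilde s\in\mathcal{F}$ over $s$, the point $x$ lifts to finitely many points of the form $\tilde x = (D a + \tilde s b, \tilde s)$ with $(a,b)\in\QQ^{2g}$ of denominator dividing $\mathrm{ord}(x)$; the Betti coordinates $\tilde b(\tilde x) = (a\bmod\ZZ^g, b\bmod \ZZ^g)$ are then rational of height $\le\mathrm{ord}(x)$. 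So torsion points of order $T$ on $X$ correspond to rational points of height $\le T$ on the definable set $\tilde b(\tilde X)\subseteq\IR^{2g}$ (or better, on $\tilde X$ itself viewed via $\tilde b$).

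Second, I would run the counting argument. Let $\Sigma = \tilde X\subseteq\IC^g\times\mathfrak{H}_g$, a definable set, and consider $Z = $ the image of the relevant torsion lifts. By Galois theory, if $x$ has order $T$ and lies in $U(\IQbar)$, then $\mathrm{Gal}(\IQbar/K)x$ consists of $\ge c\,T^{\delta}$ torsion points (Proposition~\ref{PropLGO}), all on $X$, all of order $T$, hence giving $\ge c\,T^{\delta}$ rational points of height $\le T$ on the definable set; crucially these are Galois conjugate so they all lie on $X^{\mathrm{an}}$ simultaneously. Now apply the Pila--Wilkie counting theorem in the block/semi-rational form \cite[Corollary~7.2]{HabeggerPilaENS}: for any $\epsilon>0$, all but $O_\epsilon(T^{\epsilon})$ of these rational points lie on connected semi-algebraic subsets of $\tilde X$ of positive dimension contained in $\tilde X$. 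Choosing $\epsilon<\delta$ and $T = \mathrm{ord}(x)$ large (this gives the bound $B$), for each such $x$ with $\mathrm{ord}(x)\ge B$ there is a point $\tilde x_0$ in the Galois orbit lift lying on a positive-dimensional connected semi-algebraic set $\tilde W\subseteq\tilde X$ with $u(\tilde W)\subseteq X^{\mathrm{an}}$. Shrinking $\tilde W$ to a semi-algebraic arc through $\tilde x_0$ and reparametrizing by a semi-algebraic curve selection argument (semi-algebraic sets are locally arcwise connected by semi-algebraic arcs, and one can arrange the arc to be real-analytic on $(0,1)$), I obtain the desired $\tilde\gamma_x\colon[0,1]\to\IC^g\times\mathfrak{H}_g$ with $\tilde\gamma_x(0) = \tilde x_0$, non-constant, definable, real-analytic on $(0,1)$, $u(\tilde\gamma_x([0,1]))\subseteq X^{\mathrm{an}}$, giving (i) and (ii). Property (iii), that $\tilde b\circ\tilde\gamma_x$ is semi-algebraic, follows because $\tilde b$ is itself semi-algebraic (by \eqref{EqUnivBettiMap}) and $\tilde\gamma_x$ parametrizes a semi-algebraic set; one should choose the parametrization so that the composite is a semi-algebraic map of $[0,1]$, which is possible by semi-algebraic triviality/parametrization of the image $\tilde b(\tilde W)$.

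Third, some care points. One needs $u|_{\mathcal{F}}$ definable and its restriction to $\tilde X$ to have image exactly $X^{\mathrm{an}}\cap(\text{image of }\mathcal{F})$; and one must verify that the relevant torsion lifts actually land in $\mathcal{F}$ (choose $\tilde s$ in the fundamental domain, and then the $2g$ lifts of $x$ that are of the form above with $(a,b)\in[0,1)^{2g}\cap(\frac{1}{T}\ZZ)^{2g}$ — these have bounded coordinates since $Y = \mathrm{Im}\,\tilde s$ is bounded below on $\mathcal{F}$, but $\mathrm{Im}\,\tilde s$ may be unbounded above; however $a,b$ range in $[0,1)^g$ so $Da+\tilde s b$ stays controlled only if we also bound the imaginary part — here one should instead use that $X$ maps to a quasi-projective $\IA_g$, so over the generic point things are fine, but at the boundary one restricts to a relatively compact piece and uses that torsion points of bounded height $h(\pi(x))\le c$ from Proposition~\ref{PropLGO} lie in a fixed compact subset of $\IA_g^{\mathrm{an}}$, hence admit lifts $\tilde s$ in a relatively compact semi-algebraic piece of $\mathcal{F}$).

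The main obstacle I expect is the bookkeeping around definability of the uniformization and the choice of fundamental domain ensuring that (a) the torsion points of interest lift into the definable piece and (b) their Betti coordinates are genuinely rational of controlled height — this is where one must combine the height bound $h(\pi(x))\le c$ (consequence of Theorem~\ref{ThmHtInequality}, used already in Proposition~\ref{PropLGO}) to confine $\pi(x)$ to a fixed compact set, with the standard but delicate semi-algebraicity of the period map over such a set. The rest — applying \cite[Corollary~7.2]{HabeggerPilaENS} and extracting a definable real-analytic arc with semi-algebraic Betti image — is routine once the setup is correct, though one must be attentive that Corollary~7.2 yields \emph{semi-algebraic blocks} rather than merely ``some'' positive-dimensional semi-algebraic set, which is exactly what makes the Betti image semi-algebraic in (iii).
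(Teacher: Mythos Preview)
Your strategy matches the paper's: restrict $u$ to a fundamental set to obtain a definable preimage of $X$, observe that torsion lifts have rational Betti coordinates, feed the Galois-orbit lower bound from Proposition~\ref{PropLGO} into \cite[Corollary~7.2]{HabeggerPilaENS}, and extract a path. Two points, however, are mishandled, and the first is the heart of the argument.

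You misstate what \cite[Corollary~7.2]{HabeggerPilaENS} produces. The paper applies it to the \emph{graph} $\hat X=\{(r,\tilde x)\in\IR^{2g}\times\mathfrak{F}:\tilde x\in (u|_{\mathfrak F})^{-1}(X(\IC)),\ r=\tilde b(\tilde x)\}$; the output is a continuous definable $\beta\colon[0,1]\to\hat X$ with $\pi_1\circ\beta\colon[0,1]\to\IR^{2g}$ semi-algebraic and $\pi_2\circ\beta$ non-constant (and $\beta|_{(0,1)}$ real analytic). One then sets $\tilde\gamma_x=\pi_2\circ\beta$, which is only \emph{definable}, not semi-algebraic. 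Property (iii) follows because on $\hat X$ the identity $\pi_1=\tilde b\circ\pi_2$ holds, hence $\tilde b\circ\tilde\gamma_x=\pi_1\circ\beta$ is semi-algebraic \emph{directly by the theorem}. Your claim that the counting theorem yields a positive-dimensional semi-algebraic $\tilde W\subset\tilde X$ inside $\IC^g\times\mathfrak{H}_g$, so that (iii) follows by composing the semi-algebraic $\tilde b$ with a semi-algebraic arc, is wrong: no semi-algebraic arc in $(u|_{\mathfrak F})^{-1}(X)$ is produced, and in general none exists since $u$ is transcendental. Your semi-algebraic curve-selection step would therefore fail. The whole point of the semi-rational version is that semi-algebraicity is only available on the $\IR^{2g}$-projection, while the path in the remaining coordinates is merely definable.

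Second, your ``care point'' about confining $\pi(x)$ to a compact piece of $\IA_g^{\mathrm{an}}$ via the height bound is both unnecessary and based on a false premise: $h(\pi(x))\le c$ does \emph{not} force $\pi(x)$ into any compact set. The paper sidesteps this entirely via Peterzil--Starchenko \cite{PeterzilStarchenkoDefinabilityTheta}: there is a semi-algebraic fundamental set $\mathfrak{F}_0\subset\mathfrak{H}_g$ (not relatively compact) such that for each fixed $M\ge 1$ the restriction of $u$ to $\mathfrak{F}=\tau([-M,M]^{2g}\times\mathfrak{F}_0)$ is definable in $\IRanexp$, and $M$ can be chosen so that $u(\mathfrak{F})=\mathfrak{A}_g^{\mathrm{an}}$. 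Every torsion point then lifts into $\mathfrak{F}$ with Betti coordinates in $[-M,M]^{2g}\cap\IQ^{2g}$ of height at most $M\cdot\mathrm{ord}(x)$; no compactness is invoked, and the height inequality plays no role in this proposition beyond its use inside Proposition~\ref{PropLGO}.
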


\begin{proof} Let $U$ be the Zariski open dense subset of $X$ from Proposition~\ref{PropLGO}.



  
Let $\tau \colon \IR^{2g} \times \mathfrak{H}_g \rightarrow \IC^g \times \mathfrak{H}_g$ be the semi-algebraic isomorphism given by \eqref{EqComplexStrOfX2g}. We have the following commutative diagram.
\[
\xymatrix{
\IC^g \times \mathfrak{H}_g \ar[rd]_-{\tilde{b}} & \IR^{2g} \times \mathfrak{H}_g \ar[l]_-{\tau} \ar[d] \\
 & \IR^{2g}
}
\]

By work of Peterzil--Starchenko
\cite{PeterzilStarchenkoDefinabilityTheta}, there exists a
semi-algebraic fundamental subset $\mathfrak{F}_0 \subseteq
\mathfrak{H}_g$ with the following property: for each integer $M > 0$,
if we set $\mathfrak{F} = \tau([-M,M]^{2g} \times \mathfrak{F}_0)
\subseteq \tau(\IR^{2g} \times \mathfrak{H}_g) = \IC^g \times
\mathfrak{H}_g$, then $u|_{\mathfrak{F}} \colon \mathfrak{F}
\rightarrow \mathfrak{A}_g^{\mathrm{an}}$ is  definable in the
o-minimal structure $\IRanexp$;
observe that we may assume that $\mathfrak{A}_g$ is embedded in some
projective space. 
Note that $\mathfrak{F}$ is a semi-algebraic subset of $\IC^g \times \mathfrak{H}_g$ because both $\mathfrak{F}_0$ and $\tau$ are semi-algebraic.

Fix an integer $M \ge 1$ large enough such that $u(\mathfrak{F}) = \mathfrak{A}_g^{\mathrm{an}}$.


Consider $\IR^{2g} \times \mathfrak{F} \subseteq \IR^{2g} \times \IC^g \times \mathfrak{H}_g$. Define
\begin{equation}\label{EqDefnDefFamily}
\hat{X} = \{(r, \tilde{x}) \in \IR^{2g} \times \mathfrak{F} : \tilde{x} \in (u |_{\mathfrak{F}})^{-1}(X(\IC)), ~ r = \tilde{b}(\tilde{x})\}.
\end{equation}
As both $u|_{\mathfrak{F}}$ and $\tilde{b}$ are definable maps, the set $\hat{X}$ is a definable subset of $\IR^{2g} \times \mathfrak{F}$. By choice of $\mathfrak{F}$, the following holds true:  for any $(r,\tilde{x}) \in \hat{X}$, we have $r \in [-M,M]^{2g}$.

The height $H(p/q)$ of a rational number $p/q$ with coprime integers
$p,q$ and $q\ge 1$ is $\max\{|p|,q\}$. The height $H(r_1,\ldots,r_n)$
with $r_1,\ldots,r_n\in \IQ$ is $\max_i H(r_i)$. 

For each $T \ge 1$, set $\hat{X}(T) = \{(r,\tilde{x}) \in \hat{X} : r \in \IQ^{2g},~ H(r) \le T\}$. 

Now let $x \in X(\IQbar) \cap (\mathfrak{A}_g)_{\mathrm{tor}}$ and let
$T=\ord{x}$ be the order of $x$.

Choose $\tilde{x} \in (u|_{\mathfrak{F}})^{-1}(x)$ and set $r = \tilde{b}(\tilde{x})$. Then $r \in \IQ^{2g}$ and $T\cdot r \in \IZ^{2g}$. As $r \in [-M,M]^{2g}$ by choice of $\mathfrak{F}$, we have $H(r) \le MT$. So $(r,\tilde{x}) \in \hat{X}(MT)$.

Thus we can invoke Proposition~\ref{PropLGO} to obtain a subset $\Sigma \subseteq \hat{X}(MT)$ as follows. Recall that $X$ is defined over a number field $K$, and consider $\mathrm{Gal}(\IQbar/K)x \subseteq X(\IQbar)$. Let $c$ and $\delta$ be the constants from Proposition~\ref{PropLGO}, then we have $\#\mathrm{Gal}(\IQbar/K)x \ge c T^{\delta}$. 
Each point $x' \in \mathrm{Gal}(\IQbar/K)x$ is again in $X(\IQbar)
\cap (\mathfrak{A}_g)_{\mathrm{tor}}$ of order $T$. Hence it gives rise to a point $(r',\tilde{x}') \in \hat{X}(MT)$. Let $\Sigma$ be the set of all such points in $\hat{X}(MT)$. Then for the natural projection $\pi_2 \colon \IR^{2g}\times \mathfrak{F} \rightarrow \mathfrak{F}$, we have
\begin{equation}
 \#\pi_2(\Sigma) \ge cT^{\delta}. 
\end{equation}
We may fix $B$ large in terms of $c,M,$ and $c'$, the constant in
the semi-rational variant of the Pila--Wilkie
counting theorem \cite[Cor.7.2]{HabeggerPilaENS} with $\epsilon =
\delta/2$, to the following effect.
If  $T = \ord{x}\ge B$, then
\begin{equation}
 \#\pi_2(\Sigma)  > c'(MT)^{\delta/2}.
\end{equation}



We apply \cite[Cor.7.2]{HabeggerPilaENS} to $\hat X$, taken as a
single-fiber family, $\Sigma \subseteq \hat{X}(MT)$, and $\epsilon =
\delta/2$. We obtain a continuous and definable map $\beta \colon
[0,1] \rightarrow \hat{X}$ such that
\begin{enumerate}
\item[(a)] the composition $[0,1]\xrightarrow{\beta} \hat{X} \subseteq \IR^{2g} \times \mathfrak{F} \xrightarrow{\pi_1} \IR^{2g}$ is semi-algebraic;
\item[(b)] the composition $\pi_2 \circ \beta \colon [0,1]\rightarrow \hat{X} \rightarrow \mathfrak{F}$ is non-constant;
\item[(c)] $\pi_2(\beta(0)) \in \pi_2(\Sigma)$;
\item[(d)] $\beta|_{(0,1)}$ is real analytic. 
\end{enumerate}
For the final property we require that $\IRanexp$ has analytic
cell decomposition, this follows from the
work of van den Dries and Miller \cite{DriesMiller:94}. 

Let $\tilde\gamma_x = \pi_2\circ\beta\colon [0,1]\rightarrow
\IC^g\times \mathfrak{H}_g$. 
We will show that this $\tilde\gamma_x$ is what
we desire.


First, $u(\tilde\gamma_x([0,1])) = u(\pi_2\circ \beta([0,1])) \subseteq
u(\pi_2(\hat{X}))\subset X(\IC)$. 
 Moreover, $\tilde\gamma_x$ is non-constant by property (b) above
and its restriction to $(0,1)$ is real analytic by (d). So part (i) follows.

By property (c), we have $u(\tilde\gamma_x(0)) = (u\circ \pi_2\circ\beta)(0) \in u(\pi_2(\Sigma)) \subseteq \mathrm{Gal}(\IQbar/K)x$. Hence (ii) is established.

It remains to establish property (iii). Notice that
$\pi_1|_{\hat X} = \tilde b \circ \pi_2|_{\hat X}$ by
(\ref{EqDefnDefFamily}).
As $\beta$ takes values in $\hat X$ we find
$\pi_1\circ\beta = \tilde b \circ\pi_2\circ\beta = \tilde b
\circ\tilde \gamma_x$. So $\tilde b \circ\tilde\gamma_x$ is
semi-algebraic by (a). 
\end{proof}


\section{Application of Mixed Ax--Schanuel}
\label{SectionAppAS}
Let $g\ge 1$. In this and the next section we consider
$\mathfrak{A}_g$ as a smooth, irreducible, quasi-projective variety
defined over $\IQbar$.

\begin{proposition}\label{PropXIsDeg1}
  Let $X$ be an irreducible subvariety of
  $\mathfrak{A}_g$ and assume that
  $\dim X\ge 1$, that $X_\IC^{\mathrm{deg}}(0)$ is not Zariski dense
  in $X$, and that $X(\IQbar)\cap
  (\mathfrak{A}_g)_{\mathrm{tor}}$ is Zariski dense in $X$.
  Then $X_\IC^{\mathrm{deg}}(1)$ is Zariski dense in $X_\IC$.
\end{proposition}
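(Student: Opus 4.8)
The plan is to attach to each torsion point $x$ of large order lying in a suitable open set a positive-dimensional curve $C_x\subseteq X$ that passes through a Galois conjugate of $x$ and lies in $X_\IC^{\mathrm{deg}}(1)$; since $X_\IC^{\mathrm{deg}}(1)$ is Zariski closed (Theorem~\ref{ThmDegLocusZarClosed}) and $X$ is irreducible, it then suffices that the union of these curves be Zariski dense, which will follow once the relevant torsion points are seen to be Zariski dense. I would first fix a number field $K$ over which $X$ and $\mathfrak{A}_g$ are defined and take $U$, $B$ from Proposition~\ref{PropositionSemiAlgCounting}. Then I would check that the torsion points of $X$ of order $\ge B$ lying in $U$ are Zariski dense in $X$: all torsion of $X$ is dense by hypothesis, torsion of order $<B$ lies in the proper closed subset $\bigcup_{n<B}\mathfrak{A}_g[n]$ --- were $X$ inside it, $X$ would lie in one (bi-algebraic) component of some $\mathfrak{A}_g[n_0]$, forcing $\dim X^{\mathrm{biZar}}=\dim\pi(X)^{\mathrm{biZar}}$ and so $X\subseteq X_\IC^{\mathrm{deg}}(0)$, against the hypothesis --- and deleting the complement of $U$ preserves density since $X$ is irreducible. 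For each such $x$, Proposition~\ref{PropositionSemiAlgCounting} provides the non-constant definable arc $\tilde\gamma_x\colon[0,1]\to\IC^g\times\mathfrak{H}_g$, real analytic on $(0,1)$, with $u(\tilde\gamma_x([0,1]))\subseteq X^{\mathrm{an}}$, $u(\tilde\gamma_x(0))\in\mathrm{Gal}(\IQbar/K)x$, and $\tilde b\circ\tilde\gamma_x$ semi-algebraic.

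Next, for a fixed such $x$, I would let $\tilde Z_x$ be the smallest complex analytic subset of $\IC^g\times\mathfrak{H}_g$ containing $\tilde\gamma_x([0,1])$. Because the arc is real analytic and non-constant, $\tilde Z_x$ is an irreducible complex analytic \emph{curve}: near a point where $\tilde\gamma_x$ is immersive it is the holomorphic continuation of $\tilde\gamma_x$ to a complex disc, and analytic continuation along the connected arc keeps it one-dimensional and irreducible. I would set $C_x:=u(\tilde Z_x)^{\mathrm{Zar}}$ --- a positive-dimensional irreducible subvariety of $X$ with $u(\tilde\gamma_x(0))\in C_x$ and $u(\tilde Z_x)^{\mathrm{biZar}}=C_x^{\mathrm{biZar}}$ --- and record that, writing $\pi(C_x)^{\mathrm{biZar}}=u_B(\tilde Y)$ with $\tilde Y$ algebraic in $\mathfrak{H}_g$, the connected set $\tilde\pi(\tilde Z_x)$ lies in an $\mathrm{Sp}_{2g}(\IZ)$-translate of $\tilde Y$, whence $\dim\pi(C_x)^{\mathrm{biZar}}\ge\dim\tilde\pi(\tilde Z_x)^{\mathrm{Zar}}$.

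The key step is to use the semi-algebraicity of $\tilde b\circ\tilde\gamma_x$. Let $\Gamma\subseteq\IR^{2g}$ be the Zariski closure of $(\tilde b\circ\tilde\gamma_x)([0,1])$, of dimension $\le 1$, and $\Gamma_\IC\subseteq\IC^{2g}$ its complex Zariski closure, also of dimension $\le 1$. By \eqref{EqComplexStrOfX2g} the fibre of $\tilde b$ over $(a_0,b_0)$ is $\{(v,Z):v=Da_0+Zb_0\}$, which is algebraic for \emph{all} $(a_0,b_0)\in\IC^{2g}$; hence $\tilde\gamma_x([0,1])$, and therefore $\tilde Z_x$ and $\tilde Z_x^{\mathrm{Zar}}$, lies in the algebraic set
\[
E:=\overline{\{(v,Z)\in\IC^g\times M_{g\times g}(\IC):v=Da_0+Zb_0 \text{ for some } (a_0,b_0)\in\Gamma_\IC\}},
\]
whose projection to $M_{g\times g}(\IC)$ is surjective with every fibre of dimension $\le\dim\Gamma_\IC$ (a linear image of $\Gamma_\IC$). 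This gives $\dim\tilde Z_x^{\mathrm{Zar}}\le\dim\tilde\pi(\tilde Z_x)^{\mathrm{Zar}}+\dim\Gamma_\IC$.

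To conclude I would apply the weak mixed Ax--Schanuel theorem (Theorem~\ref{ThmAS}) to $\tilde Z_x$: since $\dim\tilde Z_x=1$ (and $C_x=u(\tilde Z_x)^{\mathrm{Zar}}$), it yields $\dim C_x^{\mathrm{biZar}}\le\dim\tilde Z_x^{\mathrm{Zar}}+\dim C_x-1$; combining with the previous two inequalities,
\[
\dim C_x^{\mathrm{biZar}}-\dim\pi(C_x)^{\mathrm{biZar}} \le \dim\Gamma_\IC+\dim C_x-1 \le \dim C_x < \dim C_x+1,
\]
so $C_x$ contributes to $X_\IC^{\mathrm{deg}}(1)$. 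Finally $\bigcup_x C_x\subseteq X_\IC^{\mathrm{deg}}(1)$ meets $\mathrm{Gal}(\IQbar/K)x$ for every torsion point $x$ of $X$ of order $\ge B$ in $U$; as these Galois orbits are jointly Zariski dense in $X$ and everything is defined over a number field, $X_\IC^{\mathrm{deg}}(1)$ is Zariski dense in $X_\IC$. I expect the main obstacle to be exactly this last part: converting the bare semi-algebraicity of $\tilde b\circ\tilde\gamma_x$ into the fibration estimate for $\tilde Z_x^{\mathrm{Zar}}$ (the crux being that the Betti fibres are complex-algebraic, so the arc is trapped in $E$), and then bookkeeping the $\mathrm{biZar}$-closures under $\pi$ so that Ax--Schanuel reproduces precisely the defining inequality of the first degeneracy locus --- the count is sharp and only places $C_x$ in $X^{\mathrm{deg}}(1)$, not in $X^{\mathrm{deg}}(0)$.
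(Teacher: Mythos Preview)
Your proposal is correct and follows essentially the same route as the paper's proof (the proposition together with Lemma~\ref{LemmaZarClosureinXdeg1}): form the complex-analytic hull $\tilde Z$ of the arc, trap $\tilde Z^{\mathrm{Zar}}$ in an algebraic set $E$ with one-dimensional fibres over the base using the semi-algebraicity of the Betti image, apply mixed Ax--Schanuel to get $\dim Y^{\mathrm{biZar}}-\dim\pi(Y)^{\mathrm{biZar}}\le\dim Y$, and conclude via the Galois-orbit density argument. The only cosmetic difference is that the paper builds $E$ already restricted over the algebraic lift $\tilde Y$ of $\pi(Y)^{\mathrm{biZar}}$, whereas you bound the fibre dimension of $E$ over all of $M_{g\times g}(\IC)$ and separately record $\dim\tilde\pi(\tilde Z_x)^{\mathrm{Zar}}\le\dim\pi(C_x)^{\mathrm{biZar}}$; both yield the same inequality, and your claim $\dim\tilde Z_x=1$ (though true) is stronger than what either argument actually uses, namely $\dim\tilde Z_x\ge 1$.
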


\begin{proof}
  Let $K$ be a number field over which $\mathfrak{A}_g$ and $X$ are
  defined.

  Let $U\subset X$ and $B\ge 1$ be 
  from Proposition~\ref{PropositionSemiAlgCounting}. We may apply
  Proposition~\ref{PropositionSemiAlgCounting} to elements of $\Sigma=
  \{ x\in U(\IQbar) \cap (\mathfrak{A}_g)_{\mathrm{tors}} :
  \mathrm{ord}(x) \ge B\}$. By replacing $U$ by a Zariski open and dense
  subset we may assume that $\Sigma$ is stable under the action of
  $\mathrm{Gal}(\IQbar/K)$.

  Suppose that $\Sigma$ is not Zariski dense in $X$. Then by hypothesis $X$ contains a Zariski
  dense set of torsion points of order  $<B$. Then $X \subset
  \mathrm{ker}[n]$ for some positive integer $n< B$. As $\dim X >0$ we
  find that $X_\IC$ appears in the union (\ref{EqDefnDegLocus}) for $t=0$. In particular,  
  $X_\IC^{\mathrm{deg}}(0)=X$ which contradicts our hypothesis.
  So $\Sigma$ is Zariski dense in $X$. 


  Take $x \in \Sigma$ and 
  let $\tilde\gamma_x$ be as in Proposition~\ref{PropositionSemiAlgCounting}.

  Let $\tilde{C} = \tilde\gamma_x([0,1]) \subset\IC^g\times\mathfrak{H}_g$. This is a connected and
  definable set as $\tilde\gamma_x$ is continuous and definable;
  recall that definable means definable in $\IRanexp$.
  Moreover, $\dim\tilde{C} \ge 1$ since $\tilde\gamma_x$ is non-constant.

  We let $\tilde Z$ denote the intersection of all complex analytic subsets
  of $\IC^g\times \mathfrak{H}_g$ containing $\tilde C$. Then $\tilde Z$ is
  itself a complex analytic subset of $\IC^g\times\mathfrak{H}_g$.
  Moreover, $\tilde Z$ is irreducible as $\tilde\gamma_x|_{(0,1)}$ is real
  analytic. Finally, $\dim \tilde Z\ge 1$.

  We define $Y\subset\mathfrak{A}_{g,\IC}$
  to be the Zariski closure $u(\tilde Z)^{\mathrm{Zar}}$. Then $Y$ is
  irreducible and $\dim Y\ge 1$. 

  Next we apply the first-named author's mixed Ax--Schanuel Theorem for the
  universal family to obtain the following lemma. 
  \begin{lemma}
    \label{LemmaZarClosureinXdeg1}
    We have $Y \subseteq X_\IC^{\mathrm{deg}}(1)$.
  \end{lemma}

  Let us finish the proof of Proposition~\ref{PropXIsDeg1} assuming
  Lemma~\ref{LemmaZarClosureinXdeg1}. By property (ii) of
  Proposition~\ref{PropositionSemiAlgCounting}, we have $x' \in
  u(\tilde{C})$  for some $x' \in \mathrm{Gal}(\IQbar/K)x$. Thus $x' \in
  Y(\IC)$. Assuming Lemma~\ref{LemmaZarClosureinXdeg1}, we then have $x'
  \in X_\IC^{\mathrm{deg}}(1)$.

  To summarize, the orbit under $\mathrm{Gal}(\IQbar/K)$ of any given
  element in $\Sigma$ meets $X_\IC^{\mathrm{deg}}(1)$. Recall that
  $\mathrm{Gal}(\IQbar/K)$ acts on 
  $\Sigma$.
  So there is a subset $\Sigma'\subset \Sigma$ contained
  completely in $X_\IC^{\mathrm{deg}}(1)$ such that for all $x\in \Sigma$
  there is $\sigma\in \mathrm{Gal}(\IQbar/K)$ with $\sigma(x)\in
  \Sigma'$.
  The Zariski closure of $\Sigma'$ in $X_\IC$ is defined over $\IQbar$.
  So it is stable under a finite index subgroup of
  $\mathrm{Gal}(\IQbar/K)$.
  As $\Sigma$ is Zariski dense in $X$ we conclude that $\Sigma'$ is
  Zariski dense in $X$ as well.

  As $\Sigma' \subset X_\IC^{\mathrm{deg}}(1)$ we conclude that
  $X_\IC^{\mathrm{deg}}(1)$ is Zariski dense in $X_\IC$, as desired. 
\end{proof}

\begin{proof}[Proof of Lemma~\ref{LemmaZarClosureinXdeg1}]
  We systematically work with varieties defined over $\IC$ and  also identify
  varieties with their set of complex points.
  To ease notation we write $X$ for the base change $X_\IC$.

  We apply the weak Ax--Schanuel theorem for $\mathfrak{A}_g$,
  Theorem~\ref{ThmAS}, to $\tilde Z$ and get
  \begin{equation*}
    \dim {\tilde Z}^{\mathrm{Zar}} + \dim u(\tilde Z)^{\mathrm{Zar}} \ge \dim \tilde Z + \dim u(\tilde Z)^{\mathrm{biZar}}.
  \end{equation*}
  Recall $\dim \tilde Z\ge 1$ and  $Y=u(\tilde
  Z)^{\mathrm{Zar}}\subset u(\tilde Z)^{\mathrm{biZar}}$. So
  $Y^{\mathrm{biZar}}\subset  u(\tilde Z)^{\mathrm{biZar}}$
  and hence
  \begin{equation}
    \label{eq:masineq}
    \dim {\tilde Z}^{\mathrm{Zar}} + \dim Y \ge 1 + \dim Y^{\mathrm{biZar}}.
  \end{equation}  

  Consider the following commutative (possibly non-Cartesian) diagram involving uniformizations and projections to the bases:
  \begin{equation*}
    \xymatrix{
      \IR^{2g} & \IC^g \times \mathfrak{H}_g \ar[l]_-{\tilde{b}} \ar[r]^-{u} \ar[d]_{\tilde{\pi}} & \mathfrak{A}_g^{\mathrm{an}} \ar[d]^{\pi} \\
      & \mathfrak{H}_g \ar[r]^-{u_B} & \mathbb{A}_g^{\mathrm{an}}.
    }
  \end{equation*}
  Recall that  $\tilde b$
  is the universal uniformized Betti map defined in
  \eqref{EqUnivBettiMap}.

  We have
  \begin{equation*}
    u_B(\tilde \pi(\tilde C)) = \pi(u(\tilde C)) \subset
    \pi(u(\tilde Z)) \subset \pi(u(\tilde Z)^{\mathrm{Zar}})=\pi(Y) \subset \pi(Y)^{\mathrm{biZar}}.
  \end{equation*}
  So $\tilde\pi\circ\tilde \gamma_x$ takes values in
  $u_B^{-1}(\pi(Y)^{\mathrm{biZar}})$. As
  $\tilde\pi\circ\tilde\gamma_x|_{(0,1)}$ is real analytic and
  continuous at the boundary, the values
  of $\tilde\pi\circ\tilde \gamma_x$ lie in an complex analytic
  irreducible component $\tilde Y_0$ of   $u_B^{-1}(\pi(Y)^{\mathrm{biZar}})$. By
  the definition of bi-algebraic subsets of $\IA_g$ there is an
  algebraic subset $\tilde Y$ of $\mathrm{Mat}_{g\times g}(\IC)$ such that
  $\tilde Y_0$ is a complex analytic irreducible component of
  $\tilde Y \cap \mathfrak{H}_g$.  We may assume that $\tilde Y$ is
  irreducible.
  In particular,
  \begin{equation}
    \label{eq:dimYequality}
   \dim \tilde Y = \dim \tilde Y_0 =
  \dim \pi(Y)^{\mathrm{biZar}}. 
  \end{equation}

  Recall that $\tilde\pi(\tilde C)=\tilde\pi(\tilde\gamma_x([0,1]))
  \subset \tilde Y_0$. So
  $$
  \tilde C \subset\IC^g \times \tilde Y_0 \subset\IC^g\times \tilde Y.
  $$

  On the other hand, $\tilde b\circ\tilde\gamma_x$ is semi-algebraic
  by Proposition~\ref{PropositionSemiAlgCounting}(iii). In other
  words, the Betti coordinates along $\tilde\gamma_x$ lie in a
  real semi-algebraic curve in $\IR^{2g}$. Thus there is a complex algebraic curve
  $A\subset\IC^{2g}$ with $\tilde C\subset \{((\tau,1_g)z,\tau) : z\in
  A\text{ and }\tau\in\mathfrak{H}_g\}$ (we take points of $A$ as
  column vectors).
  We conclude $\tilde C\subset E$ with $E$ the Zariski closure of 
  \begin{equation*}
    (\IC^g\times \tilde Y)\cap \{((\tau,1_g)z,\tau) : z\in
    A\text{ and }\tau\in\mathrm{Mat}_{g\times g}(\IC)\}.
  \end{equation*}
  
  Recall that $\tilde Z$ is the smallest complex analytic subset of
  $\IC^g\times \mathfrak{H}_g$ containing
  $\tilde C$. So it is contained in $E$. Moreover,  $E$ is complex
  algebraic, hence
    ${\tilde Z}^{\mathrm{Zar}} \subset E$.
  Finally, all fibers of the restricted projection $E\rightarrow
  \mathrm{Mat}_{g\times g}(\IC)$ have dimension at most $1$.
  Therefore, $\dim E\le 1+\dim \tilde Y$ and hence
  $\dim {\tilde Z}^{\mathrm{Zar}}\le \dim E \le \dim 1 + \dim \tilde Y = 1 +\dim
  \pi(Y)^{\mathrm{biZar}}$ by (\ref{eq:dimYequality}). 

  We apply (\ref{eq:masineq}) and conclude
  $\dim Y \ge \dim Y^{\mathrm{biZar}} - \dim \pi(Y)^{\mathrm{biZar}}$.
  As $\dim Y\ge 1$ we find $Y\subset X^{\mathrm{deg}}(1)$ by
  (\ref{EqDefnDegLocus}).
  This concludes the proof of Lemma~\ref{LemmaZarClosureinXdeg1}.
\end{proof}

\subsection{Proof of Theorem~\ref{TheoremFirstDegLocusIsX}}
Let $X$ be an irreducible subvariety of $\mathfrak{A}_g$ defined over $\IQbar$ and $\dim X \ge 1$. Assume that $X(\IQbar) \cap (\mathfrak{A}_g)_{\mathrm{tor}}$ is Zariski dense in $X$.

We are in two cases: either $X_\IC^{\mathrm{deg}}(0)$ is or is not Zariski dense
 in $X$, for $X_\IC^{\mathrm{deg}}(0)$ defined by
\eqref{EqDefnDegLocus} (with $t=0$).

If $X_\IC^{\mathrm{deg}}(0)$ is Zariski dense in $X$, so is $X_\IC^{\mathrm{deg}}(1)$ because $X_\IC^{\mathrm{deg}}(1) \supseteq X_\IC^{\mathrm{deg}}(0)$ by definition of $X_\IC^{\mathrm{deg}}(t)$.

If $X_\IC^{\mathrm{deg}}(0)$ is not Zariski dense in $X$, then we can
invoke Proposition~\ref{PropXIsDeg1} to conclude that
$X_\IC^{\mathrm{deg}}(1)$ is Zariski dense in $X_\IC$.
Hence we are done.\qed


\section{The Relative Manin--Mumford Conjecture over $\IQbar$}
\label{SectionConclusion}
Now we are ready to prove  our main theorem, Theorem~\ref{MainThm}, when the base field is $L=\IQbar$,
using Theorem~\ref{TheoremFirstDegLocusIsX}.
The deduction, which uses the criterion for $X_{\IC}^{\mathrm{deg}}(1) = X$, was given by \cite[Proposition 1.10]{GaoBettiRank} and with more details 
by \cite[Corollary 6.3]{GH:nondeg} for the universal family of
principally polarized abelian varieties.
To make the current paper more self-contained, we represent and simplify this proof.
\begin{proof}[Proof of Theorem~\ref{MainThm} over $\IQbar$]
Let us first reduce to the case where $\cA \rightarrow S$ satisfies the following assumption:
\begin{center}
  {\tt (Hyp):} $S$ is a regular locally closed subvariety of $\mathbb{A}_g$ defined over $\IQbar$ and $\cA = \mathfrak{A}_g \times_{\mathbb{A}_g} S$.
\end{center}
Indeed, let $X \subseteq \cA \rightarrow S$ be defined over $\IQbar$ satisfying the assumptions of Theorem~\ref{MainThm}, \textit{i.e.}, 
$\mathbb{Z}X$ is Zariski dense in $\cA$ and $X(\IQbar) \cap \cA_{\mathrm{tor}}$ is Zariski dense in $X$. Take a relatively ample line bundle on $\cA \rightarrow S$. By \cite[$\mathsection$2.1]{GenestierNgo}, it induces a polarization of type $D = (d_1,\ldots,d_g)$ on $\cA\rightarrow S$ with $d_1|\cdots|d_g$. Take $\ell \gg 1$ large enough with $(\ell, d_g) = 1$. Then there exists a finite \'{e}tale morphism $\rho \colon S' \rightarrow S$ such that $\cA' := \cA\times_S S' \rightarrow S'$ has level-$\ell$-structure. Write $\rho_{\cA} \colon \cA' \rightarrow \cA$ for the natural projection. Let $X'$ be an irreducible component of $\rho_{\cA}^{-1}(X)$; then $\dim X' = \dim X$. It is not hard to show that 
$\mathbb{Z}X'$ is Zariski dense in $\cA'$ and $X'(\IQbar) \cap \cA'_{\mathrm{tor}}$ is Zariski dense in $X'$. 

Next we have a Cartesian diagram
\[
\xymatrix{
\cA' \ar[r]^-{\iota'} \ar[d] \pullbackcorner & \mathfrak{A}_g \ar[d] \\
S' \ar[r]^-{\iota'_S} & \mathbb{A}_g.
}
\]
Let $S''$ be a regular, Zariski open and dense subset of
$\iota'_S(S')^{\mathrm{Zar}}$ that is contained in the image of $S'$.
Let $\cA'' := \mathfrak{A}_g \times_{\mathbb{A}_g} S''$ and $X'' := \iota'(X') \cap \cA''$. Then $\dim X' \ge \dim X''$. It is not hard to check that 
$\mathbb{Z}X''$ is Zariski dense in $\cA''$, and $X''(\IQbar) \cap
\cA''_{\mathrm{tor}}$ is Zariski dense in $X''$. \textit{If}
Theorem~\ref{MainThm} holds under the extra assumption {\tt (Hyp)}
then  it holds true for $X'' \subseteq \cA'' \rightarrow S''$. So $\dim X'' \ge g$. Hence $\dim X = \dim X' \ge \dim X'' \ge g$, and we are done.

Therefore, from now on without loss of generality we work with $\cA
\rightarrow S$ satisfying {\tt (Hyp)}. 

Let $X \subseteq \cA$ be an irreducible subvariety defined over
$L=\IQbar$ satisfying the assumptions of Theorem~\ref{MainThm} and {\tt (Hyp)}. Our goal is to prove $\dim X\ge g$. We proceed by
induction on $g\ge 1$. 

First observe that  $\dim X \ge 1$. Indeed, if $X$ were a point, it
would be torsion. But this contradicts the hypothesis of
Theorem~\ref{MainThm}.
We thus recover the case $g=1$. 

We assume $g\ge 2$ and
that Theorem~\ref{MainThm} holds true for all $\cA \rightarrow
S$ satisfying {\tt (Hyp)} of relative dimension
$1,\ldots,g-1$. 


By Theorem~\ref{TheoremFirstDegLocusIsX}, $X_\IC^{\mathrm{deg}}(1)$ is
Zariski dense in $X_\IC$. So $X_\IC^{\mathrm{deg}}(1) = X_\IC$ by the Zariski
closedness of $X_\IC^{\mathrm{deg}}(1)$,
Theorem~\ref{ThmDegLocusZarClosed}. Now we apply
\cite[Theorem~8.1]{GaoBettiRank} with $t = 1$. 
 Indeed, the hypothesis on 
$\cA_X=\pi^{-1}(S)$ of the reference is satisfied since $\IZ X$ is
Zariski dense in $\cA$. 
We get a quotient abelian
scheme $\varphi \colon \cA \rightarrow \cB'$ of relative dimension
$g'$, \textit{i.e.}, there exists an abelian subscheme $\cB$ of $\cA
\rightarrow S$ with $\varphi$ being the quotient $\cA \rightarrow
\cA/\cB$, such that for the diagram
    \begin{equation*}
      \xymatrix{
        \cA \ar[d]\ar[r]^{\varphi}&      \cB' \ar[d] \ar[r]^-{\iota} \pullbackcorner & \mathfrak{A}_{g'}\ar[d] \\
        S \ar@{=}[r]&      S \ar[r]^-{\iota_S} & \mathbb{A}_{g'},
      }
    \end{equation*}
we have $\dim (\iota\circ\varphi)(X) < \dim X - (g-g') + 1$ and that $\iota\circ \varphi$ is not generically finite.

We have $0 \le \dim (\iota\circ\varphi)(X) \le \dim X - (g-g')$. So if
$g'=0$ then $\dim X \ge g$ and we are done. Let us now assume $g'\ge
1$. 

We claim that $g' < g$. Indeed, otherwise $g' = g$ and $\varphi$ is the identity. But then $\iota$ is the inclusion by {\tt (Hyp)}. This contradicts the fact that $\iota\circ \varphi$ is not generically finite.

Now $\iota(\cB') \rightarrow \iota_S(S)$ is an abelian scheme of
relative dimension $g' \le g-1$, and $(\iota\circ\varphi)(X)$ is an
irreducible subvariety of $\iota(\cB')$.
Recall that $\IZ X$ is Zariski dense in $\cA$. It is not hard to check that $(\iota\circ\varphi)(X)$ dominates $\iota_S(S)$, $\mathbb{Z}(\iota\circ\varphi)(X)$ is Zariski dense in $\iota(\cB')$, and $(\iota\circ\varphi)(X) \cap \iota(\cB')_{\mathrm{tor}}$ is Zariski dense in $(\iota\circ\varphi)(X)$. In other words,  $(\iota\circ\varphi)(X) \subseteq \iota(\cB') \rightarrow \iota_S(S)$ still satisfy the assumptions of Theorem~\ref{MainThm}.

We are almost ready to apply the induction hypothesis, except that $\iota_S(S)$ may not be regular. Set $S_0 := \iota_S(S)^{\mathrm{reg}}$, $\cB'_0 := \iota(\cB') \times_{\iota_S(S)} S_0$ and $X_0 := (\iota\circ\varphi)(X) \cap \cB'_0$. Then $X_0$ is Zariski open dense in $(\iota\circ\varphi)(X)$, and $X_0 \subseteq \cB'_0 \rightarrow S_0$ still satisfy the assumptions of Theorem~\ref{MainThm}.

The relative dimension of $\cB'_0 \rightarrow S_0$ is $g' \le g-1$. So we can apply the induction hypothesis and get $\dim X_0 \ge g'$. So $\dim X > \dim (\iota\circ\varphi)(X) + g-g'-1 = \dim X_0 + g-g'-1 \ge g-1$. Therefore $\dim X \ge g$ and we are done.
\end{proof}


\section{Uniformity of the Number of Algebraic Torsion Points in
  Curves}
\label{SectionUnifMM}
In this section, we explain how Theorem~\ref{MainThm} for $L=\IQbar$
implies Corollary~\ref{CorUniformMM}.

Let $\mathbb{M}_g$ be the moduli space of smooth projective
geometrically irreducible curves of genus $g\ge 2$ with symplectic
level-$3$-structure. Let $\mathfrak{C}_g \rightarrow \mathbb{M}_g$
be the universal curve. Let $\mathfrak{J}_g =
\mathrm{Jac}(\mathfrak{C}_g/\mathbb{M}_g) = \mathrm{Pic}^0(\mathfrak{C}_g/\mathbb{M}_g)$ be the relative Jacobian.
As usual, everything is in characteristic $0$ and we take
$\mathfrak{C}_g$ to be defined over a number field and geometrically
irreducible.

For the proof of Corollary~\ref{CorUniformMM} we treat familes of
curves over $\mathbb{M}_g$. The proof of this
lemma is inspired by \cite[Theorem~2.4]{Stoll:Uniform} and \cite[Proposition~7.1]{DGHUnifML}. For each subvariety $S$ of $\mathbb{M}_g$, write
$\mathfrak{C}_S$ and $\mathfrak{J}_S$ for the base changes
$\mathfrak{C}_g\times_{\mathbb{M}_g}S$ and
$\mathfrak{J}_g\times_{\mathbb{M}_g}S$.

\begin{lemma}\label{LemUML}
  Let $S$ be a regular, irreducible, locally closed subvariety of
  $\mathbb{M}_g$. Then there exists a constant $c = c(S) > 0$ such that
  for all $s\in S(\IQbar)$ there is $\Xi_s\subset
  \mathfrak{C}_s(\IQbar)$ with $\#\Xi_s < c$ and with the following
  property.
  For all $x\in \mathfrak{C}_s(\IQbar)\ssm \Xi_s$ we have
    $\#\{y \in \mathfrak{C}_s(\IQbar) : y-x \in (\mathfrak{J}_s)_{\mathrm{tor}} \} < c$.
\end{lemma}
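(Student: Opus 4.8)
The plan is to deduce Lemma~\ref{LemUML} from Theorem~\ref{MainThm} over $\IQbar$ applied to a suitable subvariety of an abelian scheme, by a fibered-power/specialization argument in the spirit of \cite[Theorem~2.4]{Stoll:Uniform} and \cite[Proposition~7.1]{DGHUnifML}. First I would set up the relevant family: consider the fibered square $\mathfrak{C}_S\times_S\mathfrak{C}_S \to S$, and the "difference" morphism $\delta\colon \mathfrak{C}_S\times_S\mathfrak{C}_S \to \mathfrak{J}_S$ sending $(x,y)\mapsto [y-x]$ (fiberwise Abel--Jacobi difference, which is well-defined without a base point). The set whose cardinality we must bound is, for fixed $x\in\mathfrak{C}_s$, the fiber over $x$ of the subvariety $W\subset \mathfrak{C}_S\times_S\mathfrak{C}_S$ cut out inside $\mathfrak{C}_S\times_S\mathfrak{C}_S$ by the condition $\delta(x,y)\in(\mathfrak{J}_s)_{\mathrm{tor}}$; equivalently, pull back $(\mathfrak{J}_S)_{\mathrm{tor}}$ along $\delta$. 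So I would work inside the abelian scheme $\mathfrak{J}_S\to S$ and the subvariety $X:=\delta(\mathfrak{C}_S\times_S\mathfrak{C}_S)\subset \mathfrak{J}_S$, the relative "Brill--Noether/difference surface" whose fibers $X_s$ have dimension $2$.

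Next I would invoke Theorem~\ref{MainThm}: I claim that for the generic point — or rather, passing to an irreducible component $X'$ of $X$ dominating $S$ with $\ZZ X'$ Zariski dense in the relevant abelian subscheme — if $X'(\IQbar)\cap (\mathfrak{J}_S)_{\mathrm{tor}}$ were Zariski dense in $X'$ then $\dim X'\ge g$, which is false since $\dim X' = \dim S + 2 < \dim S + g$ for $g\ge 3$ (the case $g=2$ needs separate, easy handling, or one uses that the difference surface is not contained in a translate of a proper subgroup scheme, a classical fact about Jacobians of non-hyperelliptic and hyperelliptic curves — here one must be careful about the hyperelliptic locus, where $\delta$ factors through the hyperelliptic involution). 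Granting non-density, the Zariski closure $Z$ of $X'(\IQbar)\cap (\mathfrak{J}_S)_{\mathrm{tor}}$ inside $X'$ is a proper closed subvariety, defined over $\IQbar$; its fibers $Z_s$ over $S$ are then proper closed in $X'_s$, hence of dimension $\le 1$. Then for $s$ outside the (proper, hence lower-dimensional) locus where $Z_s$ has a component of dimension $1$, the set of torsion points $X'_s(\IQbar)\cap(\mathfrak{J}_s)_{\mathrm{tor}}$ lies in $Z_s$, a finite set, and I would want a uniform bound on $\#Z_s$ — this follows from the fact that $Z\to S$ is a morphism of finite type, so the fiber cardinality of the $0$-dimensional part is bounded uniformly (upper semicontinuity / Noetherian induction on $S$). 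Translating back through $\delta$: the bad $x$'s are those for which either $x$ lies over the bad locus in $S$ — but that's a condition on $s$ not on $x$, so I instead stratify $S$ and induct, with the exceptional set $\Xi_s$ absorbing the finitely many $x$ for which the fiber $\delta_s^{-1}(\text{torsion})\cap(\{x\}\times\mathfrak{C}_s)$ is "large" because $(x,\cdot)$ meets a $1$-dimensional component of $Z_s$; since $Z_s$ is a curve its projection to the first factor $\mathfrak{C}_s$ is either finite (contributing boundedly many bad $x$, put into $\Xi_s$) or all of $\mathfrak{C}_s$, which I must rule out — a component of $Z_s$ surjecting onto $\mathfrak{C}_s$ via first projection would, by the finiteness of the second projection $W_s\to\mathfrak{C}_s$, force infinitely many torsion points in every fiber, i.e. density, contradiction after passing to that component and re-applying Theorem~\ref{MainThm}.

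The proof therefore runs by Noetherian induction on $S$: apply the above to $S$ itself to get the bound and the bad locus $S^{\mathrm{bad}}\subsetneq S$ (where density fails on no component, absorb boundedly many $x$ into $\Xi_s$), then apply the inductive hypothesis to each irreducible component of a regularization of $S^{\mathrm{bad}}$; since $\mathbb{M}_g$ is Noetherian this terminates, and the finitely many constants and exceptional-set sizes combine into a single $c(S)$. The main obstacle, I expect, is the bookkeeping around degenerate cases: hyperelliptic curves (where the difference map has smaller image, so one works with the image and its dimension is still $\ge 2$ only if handled correctly, or one treats the hyperelliptic locus as part of $S^{\mathrm{bad}}$ and handles genus-$2$ — where \emph{every} curve is hyperelliptic — by the known uniform Manin--Mumford for $g=2$ or directly), and verifying in each stratum that the relevant subvariety of $\mathfrak{J}_S$ (or of a quotient abelian scheme) genuinely satisfies the Zariski-density-of $\ZZ X$ hypothesis of Theorem~\ref{MainThm} — this requires the classical input that the difference surface of a curve of genus $\ge 2$ generates its Jacobian and is not torsion-translate of a proper subvariety, together with a careful choice of irreducible component dominating the base.
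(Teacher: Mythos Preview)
Your overall architecture (Noetherian induction on $\dim S$, apply Theorem~\ref{MainThm} to a suitable subvariety, stratify into an exceptional set $\Xi_s$ of bad base-points and a uniform bound for the rest) is exactly the shape of the paper's argument. But the dimension count at the heart of your proposal is wrong, and this is not a detail: it is the reason the paper passes to a \emph{higher fibered power} of the Jacobian rather than working in $\mathfrak{J}_S$ itself.

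Theorem~\ref{MainThm} concludes $\dim X \ge g$, where $\dim X$ is the \emph{absolute} dimension and $g$ is the relative dimension of the abelian scheme. Your $X'$ (the difference surface) has $\dim X' = \dim S + 2$, so to derive a contradiction you need $\dim S + 2 < g$. The inequality you wrote, $\dim S + 2 < \dim S + g$, only says $2 < g$ and does \emph{not} contradict $\dim X' \ge g$. Since $\dim S$ may be as large as $\dim\mathbb{M}_g = 3g-3$, the required bound $\dim S < g-2$ fails for every $g\ge 2$ on the full moduli space (and on most strata you would meet in the induction). So Theorem~\ref{MainThm} applied to the difference surface in $\mathfrak{J}_S$ gives no information, and the subsequent analysis of $Z_s$ collapses. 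Your worry about $g=2$ is a symptom of this, not a separate edge case.

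The paper's fix is to work in the $5$th fibered power: set $X = \mathscr{D}_5(\mathfrak{C}_S^{[6]}) \subset \mathfrak{J}_S^{[5]}$ via the Faltings--Zhang map $(x_0,x_1,\dots,x_5)\mapsto (x_1-x_0,\dots,x_5-x_0)$. Now the abelian scheme has relative dimension $5g$, while $\dim X \le \dim S + 6 \le (3g-3)+6 < 5g$ for all $g\ge 2$; so Theorem~\ref{MainThm} genuinely forces $Y = \overline{X\cap(\mathfrak{J}_S^{[5]})_{\mathrm{tor}}}^{\mathrm{Zar}} \subsetneq X$. The dichotomy you sketched then becomes: either $(\mathfrak{C}_s - x)^5 \subseteq Y_s$ (the bad $x$, bounded via \cite[Lemma~6.4]{DGHUnifML} and placed in $\Xi_s$), or not, in which case \cite[Lemma~6.3]{DGHUnifML} bounds the torsion packet directly. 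Those two lemmas do the combinatorial bookkeeping that replaces your degree/B\'ezout sketch.
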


\begin{proof}
We prove the lemma by induction on $\dim S$. The proof for the base step $\dim S =0$ is in fact contained in the induction.

Alternatively, the base case $\dim S =0$ follows from work of Baker
and Poonen as follows. If $S$ is a point $s$, we are dealing with a single
curve. We may take $\Xi_s=\emptyset$. The desired bound follows from
\cite[Corollary~3]{BakerPoonen:01} on torsion packets.

Set $ \mathfrak{C}_g^{[6]} $ to be the $6$-th fibered power of
$\mathfrak{C}_g$ over $\mathbb{M}_g$, and $\mathfrak{J}_g^{[5]}$ to be
the $5$-th fibered power of $\mathfrak{J}_g$ over $\mathbb{M}_g$. Let
$\mathscr{D}_5 \colon \mathfrak{C}_g^{[6]} \rightarrow
\mathfrak{J}_g^{[5]}$ be the Faltings--Zhang map, fiberwise defined by
sending $(x_0,x_1,\ldots,x_5) \mapsto (x_1-x_0,\ldots,x_5-x_0)$. Write
$\mathfrak{C}_S^{[6]}$ and $\mathfrak{J}_S^{[5]}$ for the base changes
$\mathfrak{C}_g^{[6]}\times_{\mathbb{M}_g}S$ and
$\mathfrak{J}_g^{[5]}\times_{\mathbb{M}_g}S$.

Consider $X= \mathscr{D}_5(\mathfrak{C}_S^{[6]})$, which is an
irreducible subvariety of $\mathfrak{J}_S^{[5]}$.

Observe that $\dim X\le 6+\dim S$. Since $g \ge 2$, we have $$ \dim
{\mathfrak{J}_S^{[5]}} - \dim X \ge 5g-6 > 3g-3 = \dim
\mathbb{M}_g\ge\dim S.$$
We conclude $\dim X < \dim \mathfrak{J}_S^{[5]}-\dim S$.

We claim that $\ZZ X = \bigcup_{N \in \ZZ} [N]X$ is Zariski dense in
$\mathfrak{J}_S^{[5]}$. Indeed, it suffices to prove that for each $s
\in S(\IQbar)$, the fiber $X_s$ is not contained in any proper
algebraic subgroup of $\mathfrak{J}_s^5$. Now take  $x_0 \in \mathfrak{C}_s(\IQbar)$, then $X_s = \mathscr{D}_5(\mathfrak{C}_s^6) \supseteq \mathscr{D}_5(\{x_0\} \times \mathfrak{C}_s^5) = (\mathfrak{C}_s-x_0)^5$, with $\mathfrak{C}_s - x_0$ viewed as the image of the Abel--Jacobi map from $\mathfrak{C}_s$ to $\mathfrak{J}_s$ based at $x_0$. Since $\mathfrak{C}_s - x_0$ generates $\mathfrak{J}_s$, we conclude that $X_s$ is not contained in any proper subgroup of $\mathfrak{J}_s^5$.


Thus we can apply Theorem~\ref{MainThm} to $X$  in
$\mathfrak{J}_S^{[5]} \rightarrow S$ and $L=\IQbar$. We conclude that 
\[
Y = \overline{X \cap (\mathfrak{J}_S^{[5]})_{\mathrm{tor}}}^{\mathrm{Zar}}
\]
is a proper, Zariski closed subset of $X$.

Let $\pi \colon \mathfrak{J}_S^{[5]} \rightarrow S$ denote the
structure morphism.
Now $\dim \overline{S \setminus
  \pi(X\setminus Y)}^{\mathrm{Zar}}< \dim S$.
Thus $\overline{S \setminus
  \pi(X\setminus Y)}^{\mathrm{Zar}}$ is a finite union of regular,
irreducible, locally closed subvarieties $S'$ of $S$ with $\dim
S'<\dim S$. 
So we can apply induction hypothesis to each of these $S'$ and
conclude the result. Thus, in order to prove the
lemma, it suffices to consider $s\in S(\IQbar)$  in
$\pi(X\setminus Y)$. Thus we have 
\begin{equation*}
Y_s \subsetneq X_s = \mathscr{D}_5(\mathfrak{C}_s^{[6]}). 
\end{equation*}
Let $x \in \mathfrak{C}_s(\IQbar)$.
We are in one of the two cases:
\begin{enumerate}
\item[(i)] $(\mathfrak{C}_s-x)^5 \subseteq Y_s$ or 
\item[(ii)] $(\mathfrak{C}_s-x)^5 \not\subseteq Y_s$.
\end{enumerate}

In case (i), we apply \cite[Lemma 6.4]{DGHUnifML} with
$Z$ any irreducible component of $Y_s\subsetneq
\mathscr{D}_5(\mathfrak{C}_s^6)$. We conclude that
there are at most $84(g-1)\cdot n(Y_s)$ possibilities
for $x$ where $n(Y_s)$ is the number of irreducible
components of $Y_s$. Notice that
$n(Y_s)$ is strictly less than a number $c_1'$ independent of $s$. Set $c_1 = 84(g-1)c_1'$.
The $x$ leading to case (i) will amount to the points in $\Xi_s$. 

In case (ii), set $\Sigma = \{y \in \mathfrak{C}_s(\IQbar) : y-x \in
(\mathfrak{J}_s)_{\mathrm{tor}} \}$. By \cite[Lemma 6.3]{DGHUnifML},
there exists a number $c_2$, independent of $s$, such that the
following holds. If $\# \Sigma  \ge c_2$, then $(\Sigma-x)^5 \not\subseteq Y_s(\IQbar)$. But $(\Sigma-x)^5 = \mathscr{D}_5(\{x\} \times \Sigma^5) \subseteq X(\IQbar)$ and $(\Sigma-x)^5 \subseteq (\mathfrak{J}_s^{[5]})_{\mathrm{tor}}$, so $(\Sigma-x)^5 \subseteq Y_s(\IQbar)$ by definition of $Y$. Therefore $\#\Sigma < c_2$.

Now the lemma follows by taking $c = \max\{c_1,c_2\}$.
\end{proof}

Lemma~\ref{LemUML} improves itself as follows.

\begin{proposition}\label{PropUML}
  Let $S$ and $c=c(S)$ be as in Lemma~\ref{LemUML}. For
  all $s\in S(\IQbar)$ and all $x \in \mathfrak{C}_s(\IQbar)$ we
  have  $\#\{y \in \mathfrak{C}_s(\IQbar) : y-x \in (\mathfrak{J}_s)_{\mathrm{tor}} \} < c$.
\end{proposition}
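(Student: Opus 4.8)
The plan is to promote Lemma~\ref{LemUML} to Proposition~\ref{PropUML} by exploiting that lying in a common torsion packet is an \emph{equivalence relation}, so a bound on the ``generic'' packets automatically controls the exceptional ones.

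First I would record the elementary structural fact. Fix $s \in S(\IQbar)$ and, for $x,y \in \mathfrak{C}_s(\IQbar)$, write $x \sim y$ if $y - x \in (\mathfrak{J}_s)_{\mathrm{tor}}$, where $y-x$ denotes the class in $\mathfrak{J}_s$ of the degree-zero divisor $(y)-(x)$. This relation is reflexive since $(x)-(x)=0$ is torsion, symmetric since $(x)-(y) = -\big((y)-(x)\big)$, and transitive since $(z)-(x) = \big((z)-(y)\big) + \big((y)-(x)\big)$ and the sum of two torsion points is torsion. Hence the set in the statement is exactly the equivalence class $T_x := \{ y \in \mathfrak{C}_s(\IQbar) : y \sim x \}$, and whenever $x \sim y$ we have $T_x = T_y$.

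Then I would argue as follows. Let $\Xi_s \subset \mathfrak{C}_s(\IQbar)$ and $c = c(S)$ be as in Lemma~\ref{LemUML}, so $\#\Xi_s < c$ and $\#T_y < c$ for every $y \in \mathfrak{C}_s(\IQbar) \ssm \Xi_s$. Take an arbitrary $x \in \mathfrak{C}_s(\IQbar)$ and split into two cases. If $T_x \subseteq \Xi_s$, then $\#T_x \le \#\Xi_s < c$. Otherwise there is some $y \in T_x \ssm \Xi_s$; applying Lemma~\ref{LemUML} to $y$ gives $\#T_y < c$, and since $T_x = T_y$ we again get $\#T_x < c$. In both cases the required bound holds, for every $s$ and every $x$.

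There is essentially no genuine obstacle here beyond noticing the equivalence-relation structure: the only step needing a moment's attention is the dichotomy on whether the entire packet $T_x$ is contained in the exceptional set $\Xi_s$, which is precisely what lets us absorb $\Xi_s$ and remove it from the conclusion.
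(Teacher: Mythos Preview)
Your proof is correct and is essentially the same as the paper's: both use that being in a common torsion packet is an equivalence relation, so if the packet $T_x$ is not entirely contained in $\Xi_s$ (equivalently, has at least $c$ elements), one may re-base at some $y\in T_x\setminus\Xi_s$ and apply Lemma~\ref{LemUML} to get $\#T_x=\#T_y<c$. The paper phrases this as a contradiction argument rather than an explicit dichotomy, but the content is identical.
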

\begin{proof}
  Let $s\in S(\IQbar)$ and $x\in \mathfrak{C}_s(\IQbar)$.
  Suppose $y_1,\ldots,y_n\in\mathfrak{C}_s(\IQbar)$ are pairwise
  distinct with $y_i-x$ torsion for all $i$.
  If $n\ge c$, then some $y_i$, say $y_1$, lies in
  $\mathfrak{C}_s(\IQbar)\ssm \Xi_s$ with $\Xi_s$ the set from
  Lemma~\ref{LemUML}.
  Now $y_i-y_1 = (y_i-x)-(y_1-x)$ are $n$  torsion points
  of $\mathfrak{J}_s(\IQbar)$ for $i\in\{1,\ldots,n\}$. So
  Lemma~\ref{LemUML} implies $n<c$, a contradiction. We conclude $n<c$. 
\end{proof}

\begin{proof}[Proof of Corollary~\ref{CorUniformMM}]
  By a specialization argument \cite[Lemma~3.1]{DGHBog} based on
  Masser's \cite{masser1989specializations}, it suffices to prove the
  result with $\IC$ replaced by $\IQbar$. Then the result follows
  immediately from Propsition~\ref{PropUML} applied to $S =
  \mathbb{M}_g$.
\end{proof}


\section{A Criterion for Zariski density over $\IQbar$}
\label{SectionCriterionTorsionDenseOverIQbar}
We prove Theorem~\ref{ThmCritTorsionDense} for $L=\IQbar$ in this
section. This proof uses Theorem~\ref{MainThm} for $L=\IQbar$, which
has already been establised in $\mathsection$\ref{SectionConclusion},
and the criterion of $\mathrm{rank}_{\mathrm{Betti}}(X) < 2g$ by
\cite[Theorem~1.1]{GaoBettiRank} (with $l = g$).
Recall that $\cA$ is an abelian scheme of relative dimension $g\ge 1$
over a regular, irreducible, quasi-projective base $S$ defined over $\IQbar$. 


\begin{proof}[Proof of Theorem~\ref{ThmCritTorsionDense} for $L=\IQbar$]
The implication ``$\Leftarrow$'' follows from
\cite[Proposition~2.1.1]{ACZBetti} for subfields of $\IC$.
Indeed,
$X(\IC)\cap \cA_{\mathrm{tors}}$ is dense in $X^{\mathrm{an}}$.
Among these point we may find a Zariski dense subset of
$\IQbar$-points as follows. 
The
Betti map is locally injective. So we may find a Zariski dense set of
isolated intersection points in $X(\IC)\cap \ker[n]$ as $n\in\IN$
varies.
These points are defined over $\IQbar$ as $X$ is. 

From now on, we focus on ``$\Rightarrow$''. Let $\cA \rightarrow S$ be an abelian scheme defined over $L=\IQbar$, and let $X$ be an irreducible subvariety of $\cA$ defined over $L$ such that $\ZZ X$ is Zariski dense in $\cA$ and that $X(L) \cap \cA_{\mathrm{tor}}$ is Zariski dense in $X$. 
(Observe that Theorem~\ref{MainThm} in the already proved case
$L=\IQbar$ now implies $\dim X\ge g$.)

Assume $\mathrm{rank}_{\mathrm{Betti}}(X) < 2g$. We wish to get a contradiction.

By \cite[Theorem~1.1]{GaoBettiRank} applied to $l = g$, there exists a quotient abelian scheme $\varphi \colon \cA \rightarrow \cB'$ of relative dimension $g'$, \textit{i.e.}, there exists an abelian subscheme $\cB$ of $\cA \rightarrow S$ with $\varphi$ being the quotient $\cA \rightarrow \cA/\cB$, such that for the diagram
    \begin{equation*}
      \xymatrix{
        \cA \ar[d]\ar[r]^{\varphi}&      \cB' \ar[d] \ar[r]^-{\iota} \pullbackcorner & \mathfrak{A}_{g'}\ar[d] \\
        S \ar@{=}[r]&      S \ar[r]^-{\iota_S} & \mathbb{A}_{g'},
      }
    \end{equation*}
we have $\dim (\iota\circ\varphi)(X) < g'$. 

Set $S_0 := \iota_S(S)^{\mathrm{reg}}$, $\cA_0' := \mathfrak{A}_{g'} \times_{\mathbb{A}_{g'}} S_0$ and $X_0 := (\iota\circ \varphi)(X) \cap \cA_0'$. Then $\cA_0' \rightarrow S_0$ is an abelian scheme of relative dimension $g'$ defined over $L$, and $X_0$ is Zariski open dense in $(\iota\circ\varphi)(X)$. So $\dim X_0 < g'$.

It is not hard to check that $\ZZ X_0$ is Zariski dense in $\cA_0'$
since $\ZZ X$ is Zariski dense in $\cA$, and that $X_0(L) \cap
\cA'_{0,\mathrm{tor}}$ is Zariski dense in $X_0$ since $X(L) \cap
\cA_{\mathrm{tor}}$ is Zariski dense in $X$. Thus we can apply
Theorem~\ref{MainThm}, over the base field $L$,
to $X_0 \subseteq \cA'_0 \rightarrow S_0$ and conclude that $\dim X_0 \ge g'$.

The conclusions of the last two paragraphs are contradictory. Thus we get the desired contradiction. 
So $\mathrm{rank}_{\mathrm{Betti}}(X) = 2g$. We are done.
\end{proof}

Let $L$ be an algebraically closed subfield of $\IC$.
The argument above shows that   Theorem \ref{MainThm} for $L$ implies Theorem
\ref{ThmCritTorsionDense} for $L$.


\section{Specialization: From $\IQbar$ to $\IC$}
\label{SectionSpecialization}
The goal of this section is to prove Theorem~\ref{MainThm}, proved in
$\mathsection$\ref{SectionConclusion} for $L=\IQbar$, for an arbitrary
algebraically closed field $L$ of characteristic $0$. Note first, that
it suffices to consider only subfields $L$ of $\IC$ by a suitable
Lefschetz principle. Moreover, as all varieties are defined using
finitely many polynomials and coefficients, we may assume that
$\mathrm{trdeg}_\IQbar L <\infty$. 

In this section we will also complete the proof of Theorem~\ref{ThmCritTorsionDense}.

Let $L$ be an algebraically closed subfield of $\IC$ that has finite
transcendence degree over $\IQbar$. Let $\cA
\rightarrow S$ be an abelian scheme defined over $L$, and let $X
\subseteq \cA$ be an irreducible subvariety. We suppose that $\ZZ X$
is Zariski dense in $\cA$ and $X(\IC) \cap \cA_{\mathrm{tor}}$ is
Zariski dense in $X$.

We shall prove both Theorem~\ref{MainThm} and
Theorem~\ref{ThmCritTorsionDense} by induction on $
\mathrm{trdeg}_{\IQbar} L$. We shall proceed as follows. For each
integer $d \ge 0$, we set
\begin{center}
  {\tt RMM(d):} Theorem~\ref{MainThm} holds true if
  $\mathrm{trdeg}_\IQbar L\le d$ 
\end{center}
and
\begin{center}
  {\tt TorDense(d):} Theorem~\ref{ThmCritTorsionDense} holds true if
  $\mathrm{trdeg}_\IQbar L\le d$. 
\end{center}
We proceed by proving the following statements
for each integer $d \ge 0$:
\begin{enumerate}
\item[(i)] {\tt RMM(d)} implies {\tt TorDense(d)};
\item[(ii)] {\tt TorDense(d)} implies {\tt RMM(d+1)}.
\end{enumerate}
Note that the
statement ${\tt RMM(0)}$ was proved in
$\mathsection$\ref{SectionConclusion}, \textit{i.e.},
Theorem~\ref{MainThm} in the case $L=\IQbar$.

\subsection{Proof of {\tt RMM(d)}$\Rightarrow${\tt TorDense(d)}}
This follows from a verbalized copy of the proof executed in $\mathsection$\ref{SectionCriterionTorsionDenseOverIQbar}.

\subsection{Proof of {\tt TorDense(d)}$\Rightarrow${\tt RMM(d+1)}}
Suppose we are in the case $\mathrm{trdeg}_{\IQbar} L = d+1$. So
$\cA,S,$ and $X$ are all defined over $L$. We assume that $\IZ X$ is
Zariski dense in $\cA$ and
$X(L)\cap\cA_{\mathrm{tors}}$ is Zariski dense in $X$. In particular,
$S=\pi(X)$.
Our goal is to
show $\dim X \ge g$ with $g$ the relative dimension of $\cA/S$.

To start we follow the argument from the beginning of the proof of
Theorem~\ref{MainThm} to reduce to the universal family.
We obtain from $X$ a regular, irreducible, locally closed $S''\subset\IA_{g,L}$
as well as an irreducible subvariety  $X''\subset
\cA''=\mathfrak{A}_{g,L}\times_{\IA_{g,L}} S''$
such that $\IZ X''$ is Zariski dense in $\cA''$
and such that $X''(L)\cap\cA''_{\mathrm{tors}}$ is Zariski dense in
$X''$. Moreover, $\dim X'' \le \dim X$. If we can establish $\dim
X''\ge g$ then we are done.

From now on we assume that $X$ is an irreducible  subvariety of
$\mathfrak{A}_{g,L}$ and $\pi(X) = S\subset\IA_{g,L}$. Recall that we
already reduced to the case $L\subset\IC$.

There exists an algebraically closed subfield $K \subseteq L$ with 
 $K\supset \IQbar$ such that $\mathrm{trdeg}_{\IQbar}K = d$. Then
$\mathrm{trdeg}_K L = 1$.

By \cite[Lemma~2.2]{BarDill} there is an irreducible  subvariety
$\mathfrak{X}\subset \mathfrak{A}_{g,K}$ such that $X\subset
\mathfrak{X}_L$ and $\dim \mathfrak X \le \dim X + 1$. Note that $\dim
X\le \dim\mathfrak X$. We may assume that $\mathfrak X$ is the
minimal subvariety of $\mathfrak{A}_{g,K}$ whose base change to $L$
contains $X$.

If $\dim \mathfrak X =\dim X$, then $X$ was originally already
defined over $K$. In this case $\dim X\ge g$ follows from \texttt{RMM(d)}.
So we may assume $\dim \mathfrak X = \dim X + 1$. 

Let $\mathfrak S = \pi(\mathfrak X)\subset\IA_{g,K}$.
Then $\mathfrak{S}_L\supset S$.
We claim that $\IZ \mathfrak X$ is Zariski dense in
$\pi^{-1}(\mathfrak S)$.
The Zariski closure $Z$ of $\IZ \mathfrak X$ in
$\pi^{-1}(\mathfrak S)$ 
has dimension at least
$g+\dim S = \dim \pi^{-1}(S)$ since $Z_L\supset \overline{\IZ
  X}^{\mathrm{Zar}}$.
Our claim follows if $\dim S =
\dim\mathfrak{S}$.  So we may assume $1+\dim S \le  \dim \mathfrak S$.
Suppose $g+\dim S = \dim Z$, then  $Z_L= \pi^{-1}(S)$
which contradicts $ \pi(Z)=\mathfrak{S}$.
 Thus $\dim Z\ge g + \dim S + 1$.
By \cite[Lemma~2.2]{BarDill} $S$ is contained in $\mathfrak{S}'_L$
with $\mathfrak{S}'\subset\IA_{g,K}$ and $\dim \mathfrak{S}' \le \dim
S + 1$. So $\mathfrak{X} \subset\pi^{-1}(\mathfrak S')$ by minimality
of $\mathfrak{X}$. Thus $\mathfrak S \subset \mathfrak S'$ and in
particular $\dim \mathfrak{S} = \dim S + 1$.
So we must have $\dim Z \ge
g+\dim S + 1 = g+\dim\mathfrak S = \dim \pi^{-1}(\mathfrak S)$. This
implies $Z=\pi^{-1}(\mathfrak S)$.

We claim that $\mathfrak{X}_\IC^{\mathrm{deg}}(0)$ lies Zariski dense
in $\mathfrak{X}$.
Let $x \in X(L) \cap (\mathfrak{A}_{g,L})_{\mathrm{tors}}$.
The $K$-Zariski closure $C$ of $\{x\}$ in $\mathfrak{X}$ is contained in the kernel of
$[\mathrm{ord}(x)]$.
We have $\dim C\le 1$ since $\mathrm{trdeg}_K L = 1$, indeed, use
again \cite[Lemma~2.2]{BarDill}. As 
$X(L) \cap (\mathfrak{A}_{g,L})_{\mathrm{tors}}$ is Zariski dense in
$X$ and as $X$ is not defined over $K$, we may assume $\dim C = 1$. 
Such a curve lies in $\mathfrak{X}_\IC^{\mathrm{deg}}(0)$ as it is torsion. Ranging
over the possible $x$ yields our claim.

By \cite[Theorem~1.7]{GaoBettiRank} we conclude
$\mathrm{rank}_{\mathrm{Betti}}(\mathfrak{X}) < 2\dim \mathfrak{X}$.

The above argument also implies that
$\mathfrak{X}(K)\cap(\mathfrak{A}_{g,K})_{\mathrm{tors}}$ is Zariski
dense in $\mathfrak X$. We apply \texttt{TorDense(d)} to
$\mathfrak{X}\cap\pi^{-1}(\mathfrak{S}^{\mathrm{reg}})$, which is
defined over $K$, and conclude
$\mathrm{rank}_{\mathrm{Betti}}(\mathfrak{X}) =2g $.

Combining both bounds for the Betti rank yields $\dim\mathfrak X > g$.
As $\dim X + 1 = \dim \mathfrak{X}$ we conclude $\dim X\ge g$, as desired. \qed

\appendix
\renewcommand{\thesection}{\Alph{section}}
\setcounter{section}{0}

\section{Large Galois Orbits revisited}
\label{Appendix}
In $\mathsection$\ref{SectionLGO} we used a quantitative version,
obtained by R\'{e}mond \cite[Proposition~2.9]{RemondGaloisBound}, of
Masser's earlier result on the Galois orbit of torsion points on
abelian varieties. In this appendix, we prove in
Proposition~\ref{PropDavidBound} an estimate that is sufficient for
our purposes. We will rely on a result of David
\cite[Th\'eor\`eme~1.4]{DavidMinHaut}, it is not directly related to
Masser and W\"{u}stholz's Isogeny Theorem \cite{MW:abelianisog}.

Throughout this section let $k$ be a number field contained in a fixed algebraic closure
$\overline k$. 
If not stated otherwise,  $A$ denotes
an abelian variety of dimension $g\ge 1$  defined over  $k$. We set
\[
  \rho(A,k) = 
  [k:\IQ](\max\{1,h_{\mathrm{Fal}}(A)\}+\log [k:\IQ])\ge [k:\IQ]
\]
where
$h_{\mathrm{Fal}}(A)$ denotes the \textit{stable Faltings height} of
$A$. The additive normalization in $h_{\mathrm{Fal}}$ plays no role in
the current work.

We begin by stating a special case of \cite[Th\'eor\`eme 1.4]{DavidMinHaut}. 

\begin{theorem}[David]
  \label{thm:david}
  For each integer $g\ge 1$  there is a constant $c_1(g)>0$
  with the following property. 
  Let $(A,\cL)$ be
  a principally polarized abelian variety of dimension $g$
  defined over $k$. Suppose $x\in A(k)$
  has finite order. Then there exists an abelian subvariety
  $B\subsetneq A$ defined over $\overline k$ and  positive integer $N$ such that
  $[N](x)\in B(\overline k)$ and  $\max\{N,\deg_\cL B\} \le c_1(g) \rho(A,k)^{g+1}$.
\end{theorem}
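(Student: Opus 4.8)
The plan is to deduce Theorem~\ref{thm:david} directly from the full statement of \cite[Th\'eor\`eme~1.4]{DavidMinHaut}. In the form we need it, that result is a dichotomy of the following shape: for a principally polarized abelian variety $(A,\cL)$ of dimension $g$ over $k$ and an arbitrary point $P\in A(\overline k)$, \emph{either} there exist a proper abelian subvariety $B\subsetneq A$ defined over $\overline k$ and an integer $N\ge 1$ with $[N](P)\in B(\overline k)$ and $\max\{N,\deg_{\cL}B\}$ bounded by a constant depending only on $g$ times a fixed power of a suitable measure of the arithmetic complexity of $(A,k)$, \emph{or else} the N\'eron--Tate height $\hat h_{\cL}(P)$ is bounded from below by an explicit positive quantity. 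The first step is therefore to record this statement and to fix the normalizations of the height and of the principal polarization so that the ``arithmetic complexity'' appearing in David's bound is dominated, up to a constant depending only on $g$, by $\rho(A,k)^{g+1}$ with $\rho(A,k)=[k:\IQ](\max\{1,h_{\mathrm{Fal}}(A)\}+\log [k:\IQ])$; this uses only the standard comparisons relating the stable Faltings height to the other usual heights of an abelian variety, together with Bost's semistable lower bound $h_{\mathrm{Fal}}(A)\ge -c(g)$.

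The second step is to specialize to $P=x$ of finite order and to observe that the second alternative is impossible. Indeed, a point of $A(\overline k)$ has vanishing N\'eron--Tate height precisely when it is torsion: $\hat h_{\cL}([m]x)=m^2\hat h_{\cL}(x)$ remains bounded as $m$ runs through $\ZZ$ only if $\hat h_{\cL}(x)=0$, and Northcott's theorem then forces $\{[m]x:m\in\ZZ\}$ to be finite. Since $\hat h_{\cL}(x)=0$ is strictly smaller than any positive lower bound, David's dichotomy leaves only the first alternative, and that alternative is exactly the conclusion of Theorem~\ref{thm:david}. For $x=0$ one may simply take $B=\{0\}$ and $N=1$; the content of the statement is the case of large $\ord{x}$, where it asserts that a bounded multiple of $x$ is trapped in a proper abelian subvariety of bounded degree.

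I expect the only genuine work to be bookkeeping: translating David's bound --- stated in \cite{DavidMinHaut} in its own height normalizations and degree conventions --- into the single quantity $c_1(g)\rho(A,k)^{g+1}$, using the comparison estimates between heights of abelian varieties and Bost's lower bound, and enlarging $c_1(g)$ to absorb the logarithmic and degree factors. No Diophantine input beyond \cite[Th\'eor\`eme~1.4]{DavidMinHaut} itself is needed: once that result is put in the normalized dichotomy form, the vanishing of the canonical height on torsion points closes the argument immediately.
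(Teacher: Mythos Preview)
Your proposal is correct and follows essentially the same route as the paper's proof: both invoke the dichotomy of \cite[Th\'eor\`eme~1.4]{DavidMinHaut}, observe that the N\'eron--Tate height of a torsion point vanishes so that the height-lower-bound alternative is excluded, and then reduce the remaining work to converting David's theta-height normalization into the quantity $\rho(A,k)^{g+1}$ via the Bost--David/Pazuki comparison estimates. The paper carries out this last bookkeeping explicitly (using a Siegel-reduced period matrix to control $\|\mathrm{Im}\,\tau\|$), whereas you leave it as a reference to standard comparisons, but the structure is identical.
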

\begin{proof}
  Since $x$ has finite order, its N\'eron--Tate height vanishes. So we
  are in the second case of \cite[Theorem~1.4]{DavidMinHaut}.
  David used a height of $A$ defined using theta functions
  (and thus is sometimes called a \textit{Theta height}). The comparison
  with  $h_{\mathrm{Fal}}(A)$ is done
  by work of Bost and David; see \cite[Corollaire~6.9]{DPvarabII} and
  \cite[Corollary~1.3]{Pazuki:12}. Thus in \cite[Th\'eor\`eme~1.4]{DavidMinHaut}, we can use the stable Faltings height after
  modifying the multiplicative constant $c_1(g)$ (which is called $c_2$
  in the reference.)
  Moreover, in the reference we  may assume $\|\mathrm{Im}\tau\|\ge
  \sqrt{3}/2$ as $\tau$ can be assumed to be Siegel reduced.
  Set $D = \max\{2,[k:\IQ]\}$ and $h = \max\{1,h_F(A)\}$. 
  Observe that
  $$
  \frac{2}{\sqrt 3} D(h+\log D) + D^{1/(g+2)} \le 2 D(h+\log D) + D\le
  3 D(h+\log D)
  $$
  as $h+\log D\ge 1$. If $k\not=\IQ$, then $D(h+\log D) = \rho(A,k)$
  and otherwise $D(h+\log D) = 2(h+\log 2) \le 2(1+\log 2) h \le 4 \rho(A,\IQ)$. 
  So the desired bound follows after
  modifying $c_1(g)$ again.
\end{proof}

We will reduce to the principally polarized case using the next lemma.


\begin{lemma}
  \label{lem:ppreduction}
  Let $(A,\cL)$ be a polarized abelian
  variety with $\dim A =g\ge 1$ and
  $\Delta = (\deg_\cL A)/g!$. Then $\Delta = \dim H^0(A,\cL)$ and 
  there is a number field $k'\supset k$ 
  with $[k':k]\le \Delta^{2g}$,
  a principally polarized abelian variety $(B,\cM)$ defined over
  $k'$, and an isogeny $A_{k'}\rightarrow B$, also defined over
  $k'$, of degree at most $\Delta$. Moreover,
  $\rho(B,k')\le \Delta^{2g}(\rho(A,k) + 3g[k:\IQ]\log\Delta)$. 
\end{lemma}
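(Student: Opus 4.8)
The plan is to realize $(B,\cM)$ as the quotient of $A$ by a maximal isotropic subgroup of the kernel of the polarization isogeny, and then to make that classical construction quantitative. The identity $\Delta=\dim H^0(A,\cL)$ is immediate: $\cL$ is ample (being a polarization), so $H^i(A,\cL)=0$ for $i>0$ and $\dim H^0(A,\cL)=\chi(\cL)$; the Riemann--Roch theorem on abelian varieties then gives $\chi(\cL)=(\cL^g)/g!=(\deg_\cL A)/g!=\Delta$.

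For the isogeny, recall that $\phi_\cL\colon A\to\hat A$ is an isogeny over $k$ whose kernel $K(\cL)$ is a finite $k$-group scheme of order $\Delta^2$ carrying the nondegenerate commutator pairing $e^\cL$. Over $\overline k$ I would pick a maximal isotropic subgroup $K\subseteq K(\cL)$; it has order $\Delta$, and for the quotient $B:=A/K$ the projection $\pi\colon A\to B$ is an isogeny of degree $|K|=\Delta$. Since $K$ is isotropic for $e^\cL$, the restriction to $K$ of the theta group $\mathcal{G}(\cL)$ is a commutative central extension of $K$ by $\GG_m$; over $\overline k$ such an extension splits, so $\cL$ is $K$-linearizable and descends to a line bundle $\cM$ on $B$ with $\pi^*\cM\cong\cL$. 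Comparing degrees in $\phi_\cL=\hat\pi\circ\phi_\cM\circ\pi$ (with $\deg\phi_\cL=\Delta^2$ and $\deg\pi=\deg\hat\pi=\Delta$) forces $\deg\phi_\cM=1$, i.e. $(B,\cM)$ is principally polarized. To descend all of this to a number field $k'$, observe that $\gal{\overline k/k}$ acts on $K(\cL)(\overline k)$ respecting $e^\cL$ up to the cyclotomic character, hence permutes the finite set of pairs consisting of a maximal isotropic subgroup and a compatible $K$-linearization of $\cL$; the size of that set is at most the number of maximal isotropic subgroups of $K(\cL)$ times the $\Delta$ possible linearizations, and the classical count of maximal isotropic subgroups of a finite symplectic module shows this number is $\le\Delta^{2g-1}$ (for any $g\ge1$ and any polarization type), hence the whole set has size $\le\Delta^{2g}$. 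Taking $k'$ to be the field fixing one such pair gives $[k':k]\le\Delta^{2g}$ with $B$, $\cM$ and $\pi$ all defined over $k'$.

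It remains to bookkeep heights. The stable Faltings height is independent of the base field, so $h_{\mathrm{Fal}}(A_{k'})=h_{\mathrm{Fal}}(A)$, and the standard comparison of stable Faltings heights along an isogeny gives $h_{\mathrm{Fal}}(B)\le h_{\mathrm{Fal}}(A)+\tfrac12\log\deg\pi\le h_{\mathrm{Fal}}(A)+\tfrac12\log\Delta$, whence $\max\{1,h_{\mathrm{Fal}}(B)\}\le\max\{1,h_{\mathrm{Fal}}(A)\}+\tfrac12\log\Delta$. Using $[k':\IQ]\le\Delta^{2g}[k:\IQ]$ and $\log[k':\IQ]\le\log[k:\IQ]+2g\log\Delta$, and substituting into $\rho(B,k')=[k':\IQ]\bigl(\max\{1,h_{\mathrm{Fal}}(B)\}+\log[k':\IQ]\bigr)$, one gets $\rho(B,k')\le\Delta^{2g}\bigl(\rho(A,k)+(2g+\tfrac12)[k:\IQ]\log\Delta\bigr)$, and $2g+\tfrac12\le3g$ for $g\ge1$ yields the stated bound.

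The construction itself is classical, so the real work is making it effective, and I expect the delicate point to be the descent of the principal polarization to $k'$ while keeping $[k':k]\le\Delta^{2g}$: that is, pinning down the count of maximal isotropic subgroups of $K(\cL)$ for an arbitrary polarization type $(d_1,\dots,d_g)$ together with the ambiguity of the $K$-linearization of $\cL$, and verifying that their product stays below $\Delta^{2g}$ uniformly in $g$. Everything else — Riemann--Roch, the Faltings-height comparison under isogeny, and the final arithmetic — is routine once this is in place.
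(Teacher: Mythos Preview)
Your approach coincides with the paper's: quotient by a maximal isotropic subgroup $K\subseteq K(\cL)$, then bookkeep the Faltings height and the degree $[k':k]$. The Riemann--Roch step and the final $\rho$-estimate are both correct and match the paper's computation line for line. The one genuine flaw is your bound on the number of maximal isotropic subgroups: the claim that there are at most $\Delta^{2g-1}$ of them is false already for $g=1$ and $\Delta=p$ prime, where $K(\cL)\cong(\IZ/p\IZ)^2$ has exactly $p+1>p=\Delta^{2g-1}$ maximal isotropic subgroups. With the extra factor $\Delta$ from the linearizations your count of pairs is $p(p+1)>p^2=\Delta^{2g}$, so the Galois-orbit argument as written does not deliver $[k':k]\le\Delta^{2g}$.

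The paper sidesteps this count entirely. Having fixed $K$ over $\overline k$, it writes $K\cong\prod_{i=1}^s\IZ/t_i\IZ$ with $t_1\cdots t_s=|K|=\Delta$ and chooses generators $P_1,\ldots,P_s$ of the cyclic factors. Each $P_i\in A[t_i](\overline k)$ has at most $|A[t_i](\overline k)|=t_i^{2g}$ Galois conjugates, so $k'=k(P_1,\ldots,P_s)$ satisfies $[k':k]\le\prod_i t_i^{2g}=\Delta^{2g}$. Over this $k'$ the subgroup $K$, the quotient $B=A/K$, and the isogeny $\pi$ are all defined; the principal polarization $\phi_{\cM}\colon B\to\hat B$ then descends automatically from the identity $\hat\pi\circ\phi_{\cM}\circ\pi=\phi_{\cL}$, so separately tracking a $K$-linearization of $\cL$ is unnecessary for the statement.
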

\begin{proof}
  The Riemann--Roch Theorem and
  Vanishing Theorem on higher cohomology for ample line bundles,
  see \cite[Section 16]{MumfordAbVar70}, imply $\Delta=\dim H^0(A,\cL) = 
  (\deg_\cL A)/g!$. This is the first claim.
  By a classical result, see \cite[Lemme~3.5]{GR:periodsisgoenies},
  there is an isogeny $\varphi\colon A_{\overline k}\rightarrow B$ of degree
  $\dim H^0(A,\cL) =\Delta$
  to a principally polarized abelian variety $(B,\cM)$
  defined over $\overline k$; here $\varphi^* \cM = \cL$. 

  The kernel $\ker \varphi$ is isomorphic to a product of cyclic 
  groups of order $t_1,\ldots,t_s$, say. Both $B$ and $\varphi$ are
  defined over the field $k'$ field generated by certain points of order
  $t_1,\ldots,t_s$, respectively. A point in $A(\overline k)$ of order
  $t_i$ generates a field extension of $k$ of degree at most $t_i^{2g}$.
  Therefore, $[k':k]\le (t_1\cdots t_s)^{2g} = \Delta^{2g}$.

  Faltings's estimate implies
  \begin{equation*}
    h_F(B)\le h_F(A) + \frac 12 \log\Delta. 
  \end{equation*}
  So
  \begin{alignat*}1
    \rho(B,k') &= [k':\IQ] (\max\{1,h_F(B)\}+\log[k':\IQ]) \\
    &\le \Delta^{2g}[k:\IQ] \left(\max\{1,h_F(A)\}+\frac 12\log\Delta +
    2g\log\Delta +\log[k:\IQ]\right)\\
    &=\Delta^{2g} \rho(A,k)+\Delta^{2g}[k:\IQ]\left(\frac 12+2g\right)\log\Delta. \qquad \qquad \qedhere
  \end{alignat*}
\end{proof}

The following lemma is obtained by iterating David's result. For
an integer $g\ge 1$ we define $\lambda(g) = 3^{g+1}(g!)^2 \ge 3$. 

\begin{lemma}\label{lem:DavidBound}
  For each integer $g\ge 1$  there is a constant $c_2(g)>0$
  with the following property.   
  Let
  $(A,\cL)$ be
  a {principally polarized} abelian variety of dimension $g$
  defined over $k$. 
  If $x\in A(k)$ has finite order, then
   $\ord{x}\le c_2(g) \rho(A,k)^{\lambda(g)}$.
\end{lemma}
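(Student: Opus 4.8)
The plan is to argue by induction on $g$, using David's Theorem~\ref{thm:david} to push a torsion point down to a proper abelian subvariety of $A$, and Lemma~\ref{lem:ppreduction} to restore a principal polarization on that subvariety so that the inductive hypothesis can be applied. For $g=1$ the only abelian subvariety $B\subsetneq A$ is $\{0\}$, so Theorem~\ref{thm:david} gives an integer $N$ with $[N](x)=0$ and $N\le c_1(1)\rho(A,k)^{2}$; since $\rho(A,k)\ge 1$ and $\lambda(1)=9\ge 2$ we get $\ord{x}\mid N\le c_1(1)\rho(A,k)^{\lambda(1)}$, so $c_2(1)=c_1(1)$ works.

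Now assume $g\ge 2$ and the statement in all dimensions $<g$. Put $D=c_1(g)\rho(A,k)^{g+1}$. Theorem~\ref{thm:david} produces an abelian subvariety $B\subsetneq A$ over $\IQbar$ and a positive integer $N\le D$ with $[N](x)\in B(\IQbar)$ and $\deg_\cL B\le D$; in particular $\Delta_B:=\dim H^0(B,\cL|_B)=(\cL|_B)^{\dim B}/(\dim B)!\le D$. The first point is to control the field of definition of $B$: every abelian subvariety of $A_{\IQbar}$ arises from an idempotent of $\mathrm{End}(A_{\IQbar})_{\IQ}$ together with the polarization, and all endomorphisms of $A_{\IQbar}$ are defined over $k(A[3])$ (by Minkowski's lemma the congruence subgroup $\Gamma(3)$ is torsion free, while the Galois image on the free $\ZZ$-module $\mathrm{End}(A_{\IQbar})$ is finite); hence $B$ descends to a number field $k_1\supseteq k$ with $[k_1:k]$ bounded purely in terms of $g$. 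Since the stable Faltings height is insensitive to the field, and since $B$ admits a complement $B^{\perp}$ with $B\times B^{\perp}\to A$ an isogeny whose degree is at most a power of $D$, additivity of $h_{\mathrm{Fal}}$ over products together with its absolute lower bound yields $h_{\mathrm{Fal}}(B)\le h_{\mathrm{Fal}}(A)+c(g)\max\{1,\log D\}$, and therefore $\rho(B,k_1)\le c(g)\bigl(\rho(A,k)+[k:\IQ]\log D\bigr)\le c(g)\rho(A,k)^{2}$.

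Next, $\cL|_B$ is a polarization on $B$, so Lemma~\ref{lem:ppreduction} furnishes a number field $k_2\supseteq k_1$, a principally polarized $(C,\cM)$ over $k_2$ with $\dim C=\dim B<g$, and an isogeny $\psi\colon B_{k_2}\to C$ over $k_2$ with $\deg\psi\le\Delta_B\le D$, such that
\[
\rho(C,k_2)\le \Delta_B^{2\dim B}\bigl(\rho(B,k_1)+3(\dim B)[k_1:\IQ]\log\Delta_B\bigr)\le c(g)\,\rho(A,k)^{2g^{2}},
\]
where I used $\Delta_B\le D$, $\dim B\le g-1$, $[k_1:\IQ]\le c(g)[k:\IQ]\le c(g)\rho(A,k)$ and the bound on $\rho(B,k_1)$. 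The point $y:=\psi([N](x))\in C(k_2)$ is torsion, so the inductive hypothesis in dimension $\dim B$ (and $\lambda$ non-decreasing, $\rho(C,k_2)\ge 1$) gives $\ord{y}\le c_2(g-1)\rho(C,k_2)^{\lambda(g-1)}$. As $\ker\psi$ is finite of exponent dividing $\deg\psi$ we have $\ord{[N](x)}\mid\deg(\psi)\,\ord{y}$, and $\ord{x}\mid N\,\ord{[N](x)}$; hence
\[
\ord{x}\le N\deg(\psi)\,\ord{y}\le D^{2}c_2(g-1)\rho(C,k_2)^{\lambda(g-1)}\le c(g)\,\rho(A,k)^{\,2(g+1)+2g^{2}\lambda(g-1)}.
\]
It remains to check $2(g+1)+2g^{2}\lambda(g-1)\le 3g^{2}\lambda(g-1)=\lambda(g)$, i.e. $2(g+1)\le g^{2}\lambda(g-1)$, which holds for $g\ge 2$ since $\lambda(g-1)\ge\lambda(1)=9$; enlarging $c_2(g)$ to absorb the accumulated constants closes the induction.

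The main obstacle is the bookkeeping in the inductive step. David's theorem supplies $B$ only over $\IQbar$, so one must simultaneously control $[k(B):\IQ]$ (via the bounded field of definition of endomorphisms) and $h_{\mathrm{Fal}}(B)$ (via a complement and additivity of the Faltings height), and then track how $[k_2:\IQ]$, $\deg\psi$, $\Delta_B$ and the height compound through Lemma~\ref{lem:ppreduction}, verifying at the end that the exponent of $\rho(A,k)$ has not exceeded $\lambda(g)$. The shape $\lambda(g)=3^{g+1}(g!)^{2}$, equivalently the recursion $\lambda(g)=3g^{2}\lambda(g-1)$ with $\lambda(1)=9$, is chosen precisely to leave room for this compounding (the estimates above in fact only consume a factor $\approx 2g^{2}$ per step).
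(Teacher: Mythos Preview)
Your proof is correct and follows essentially the same inductive strategy as the paper: apply David's theorem to push $[N](x)$ into a proper abelian subvariety $B$, pass to a principally polarized quotient via Lemma~\ref{lem:ppreduction}, and invoke the inductive hypothesis, with the field of definition of $B$ controlled through $k(A[3])$. The only differences are cosmetic: you spell out the bound $h_{\mathrm{Fal}}(B)\le h_{\mathrm{Fal}}(A)+c(g)\log D$ via the complement $B^\perp$ and additivity (the paper simply cites \cite[\S 2.3]{GR:periodsisgoenies} for this), and your intermediate estimate $\rho(B,k_1)\le c(g)\rho(A,k)^2$ is slightly looser than the paper's linear bound, but your exponent check $2(g+1)+2g^2\lambda(g-1)\le\lambda(g)$ still closes comfortably.
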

\begin{proof}
  The field generated by $k$ and all $3$-torsion points of $A$ has
  degree at most $3^{(2g)^2}$ over $k$. After replacing $k$  by this
  field  we may assume that all $3$-torsion points are
  $k$-rational. By Silverberg \cite[Theorem~2.4]{Silverberg:fielddef}
  and Poincar\'e's Complete Reducibility Theorem,  all abelian subvarieties of $A$ are
  defined over $k$. 

  Below $c_2(g),c_3(g),\ldots$ denote positive values that depend only
  on $g$. We will determine $c_2(g)$ by induction on $g\ge 1$.
  The case $g=1$ is treated during the induction.

  Suppose $x\in A(k)$ has finite order. 
  We let $B$ and $N$ be as in  Theorem~\ref{thm:david} applied to $x$.
  If $B=0$, then $x$ has order at most $N$
  and we are done.
  In the case $g=1$ then $B=0$ since $B\subsetneq A$. So this argument
  covers the base case of the induction.

  We assume $\dim B\ge 1$.
  Note that $B$ need not be principally polarized. Let $k'$ and $C$ be
  as in Lemma~\ref{lem:ppreduction} applied to $(B,\cL|_B)$. We define
  \begin{equation}
    \label{eq:Deltabound}
     \Delta = \dim H^0(B,\cL|_B) = \frac{\deg_\cL B}{(\dim B)!}\le c_1(g) \rho(A,k)^{g+1} 
  \end{equation}

  It is known that
  the stable Faltings height satisfies
  \begin{equation*}
    h_F(B) \le h_F(A) + \log \dim H^0(B,\cL|_B) + c_3(g) = h_F(A) +
    \log\Delta + c_3(g),
  \end{equation*}
  see \cite[$\mathsection$2.3]{GR:periodsisgoenies}. This and (\ref{eq:Deltabound}) imply
  \begin{equation*}
    \rho(B,k) \le [k:\IQ] ( \max\{1,h_F(A)\} + \log \Delta +c_3(g) +
    \log[k:\IQ])
    \le c_4(g) \rho(A,k). 
  \end{equation*}



  We write $\varphi\colon B_{k'}\rightarrow C$ for the isogeny provided by
  Lemma~\ref{lem:ppreduction}. 
  As $[N](x)\in B(k')$, the image $\varphi([N](x))\in C(k')$
  is well-defined and of finite order. 
  Moreover, $\dim C=\dim B < g$, so this lemma applied by
  induction to the pair $C,k'$ and  $\varphi([N](x))$
  together with the
  estimates above
  yields
  \begin{alignat*}1
    \ord{\varphi([N](x))} &\le  c_2(\dim B)
    \rho(C,k')^{\lambda(\dim B)} \\
    &\le c_5(g) c_2(\dim B) \left(\Delta^{2\dim B}(\rho(B,k)+[k:\IQ]
      \log\Delta)\right)^{\lambda(\dim B)}
    \\
    &\le c_6(g) c_2(\dim B)
    \left(\Delta^{2\dim B}(c_4(g)\rho(A,k)+[k:\IQ] \log\rho(A,k))\right)^{\lambda(\dim B)}
    \\
    &\le c_7(g) c_2(\dim B) \left(\Delta^{2\dim B}\rho(A,k)\right)^{\lambda(\dim B)}\\
    &\le c_8(g) c_2(\dim B) \rho(A,k)^{(1+2(g+1)\dim B)\lambda(\dim B)}. 
  \end{alignat*}
  
  We use the bound for $N$ from Theorem~\ref{thm:david} and the bound for
  $\deg\varphi$ from Lemma~\ref{lem:ppreduction} together with
  (\ref{eq:Deltabound}) to find
  \begin{alignat*}1
    \ord{x} &\le 
    N(\deg\varphi)
    \ord{\varphi([N]x)} \\
    &\le c_1(g)^2 \rho(A,k)^{2(g+1)}
    \ord{\varphi([N]x)}
    \\
    &\le c_{9}(g) c_2(\dim B) \rho(A,k)^{2(g+1)+(1+2(g+1)\dim B)\lambda(\dim
      B)}.
  \end{alignat*}

  As $\dim B\le g-1$ and $\lambda(\dim B)\le \lambda(g-1)$
the exponent of $\rho(A,k)$ is at most
  \begin{equation*}
    2(g+1)+(2g^2-1)\lambda(g-1)\le 2g +2g^2\lambda(g-1);
  \end{equation*}
  the final step used $\lambda(g-1)\ge 2$. The proposition follows as
  $2g+2g^2\lambda(g-1) = 2g+2 \cdot 3^g g^2((g-1)!)^2=2g+2\cdot 3^g
  (g!)^2 \le \lambda(g)$.
  %
%
%
\end{proof}

Finally, we drop the principally polarized hypothesis.
\begin{proposition}\label{PropDavidBound}
  For each integer $g\ge 1$ there is a constant $c(g)>0$ with the
  following property. Let $(A,\cL)$ be a
  polarized abelian variety of dimension $g$ defined over $k$. If $x\in
  A(k)$ has finite order, then
  $$\ord{x}\le c(g)
  (\deg_\cL A)^{1+(2g+1)\lambda(g)} \rho(A,k)^{\lambda(g)}.$$
\end{proposition}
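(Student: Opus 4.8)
The plan is to reduce to the principally polarized situation via Lemma~\ref{lem:ppreduction} and then apply Lemma~\ref{lem:DavidBound}. First I would set $\Delta = \dim H^0(A,\cL) = (\deg_\cL A)/g!$. If $\Delta = 1$, then $(A,\cL)$ is already principally polarized and the bound is immediate from Lemma~\ref{lem:DavidBound}; so the interesting case is $\Delta\ge 2$, which I assume from now on. Applying Lemma~\ref{lem:ppreduction} produces a number field $k'\supset k$ with $[k':k]\le\Delta^{2g}$, a principally polarized abelian variety $(B,\cM)$ defined over $k'$, and an isogeny $\varphi\colon A_{k'}\to B$ defined over $k'$ of degree at most $\Delta$, satisfying $\rho(B,k')\le\Delta^{2g}(\rho(A,k)+3g[k:\IQ]\log\Delta)$.

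Next, given $x\in A(k)$ of finite order, I would pass to $\varphi(x)\in B(k')$, which is again of finite order, and invoke Lemma~\ref{lem:DavidBound} to get $\ord{\varphi(x)}\le c_2(g)\rho(B,k')^{\lambda(g)}$. To recover a bound on $\ord{x}$ itself, note that if $n=\ord{\varphi(x)}$ then $[n]x\in\ker\varphi$, a group scheme of order $\deg\varphi\le\Delta$, so $[n\deg\varphi]x=0$ and hence $\ord{x}\le\Delta\,\ord{\varphi(x)}\le c_2(g)\Delta\,\rho(B,k')^{\lambda(g)}$.

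Finally I would clean up the estimate for $\rho(B,k')$. Using $[k:\IQ]\le\rho(A,k)$, which holds by the very definition of $\rho$, together with $1\le\log\Delta\le\Delta$ for $\Delta\ge 2$, one obtains $\rho(B,k')\le\Delta^{2g}\rho(A,k)(1+3g\log\Delta)\le c_3(g)\Delta^{2g+1}\rho(A,k)$ for a suitable constant $c_3(g)$ depending only on $g$. Raising to the power $\lambda(g)$ and substituting gives $\ord{x}\le c(g)\Delta^{1+(2g+1)\lambda(g)}\rho(A,k)^{\lambda(g)}$, and since $\Delta\le\deg_\cL A$ this is exactly the claimed bound. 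I do not expect a genuine difficulty here: the whole argument is bookkeeping on top of Lemmas~\ref{lem:ppreduction} and \ref{lem:DavidBound}, with the only points requiring attention being the $\Delta=1$ degenerate case and the absorption of the $\log\Delta$ and $[k:\IQ]$ factors into the stated powers of $\deg_\cL A$ and $\rho(A,k)$.
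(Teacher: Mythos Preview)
Your approach is correct and essentially identical to the paper's: reduce to the principally polarized case via Lemma~\ref{lem:ppreduction}, apply Lemma~\ref{lem:DavidBound} to $\varphi(x)$, and bound $\ord{x}$ through $\#\ker\varphi\le\Delta$; the paper compresses the final estimate into the phrase ``after a short calculation,'' and you have simply written out that calculation. One tiny slip: the inequality $1\le\log\Delta$ fails for $\Delta=2$, but it is not actually needed---$1\le\Delta$ and $\log\Delta\le\Delta$ already give $1+3g\log\Delta\le(1+3g)\Delta$, which is what you use.
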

\begin{proof}
  Let $k',B,$ and $\varphi\colon A_{k'}\rightarrow B$ be as in
  Lemma~\ref{lem:ppreduction} applied to $(A,\cL)$.
  Say $x\in A(k')$ has finite order. Then so does $\varphi(x) \in B(k')$ and
  $\ord{x}\le \#(\ker\varphi)\cdot \ord{
    \varphi(x)} \le \frac{1}{g!}(\deg_\cL A) \cdot \ord{\varphi(x)}$.
  We apply Lemma~\ref{lem:DavidBound} to bound 
  $\ord{\varphi(x)}$ from above and conclude the proposition after a
  short calculation.
\end{proof}

The value of the exponent $\lambda(g)$ is irrelevant for the
method. It suffices to know that it is a function of $g$.



\bibliographystyle{alpha}
\bibliography{literature}

\end{document}